\newcommand{\calP}{{\mathcal P}}
\newcommand{\calS}{{\mathcal S}}
\newcommand{\calM}{{\mathcal M}}
\newcommand{\calT}{{\mathcal T}}
\newcommand{\calB}{{\mathcal B}}
\newcommand{\calC}{{\mathcal C}}
\newcommand{\calU}{{\mathcal U}}
\newcommand{\mP}{{\mathbb P}}
\newcommand{\F}{\mathcal{F}}
\newcommand{\G}{\mathcal{G}}
\newcommand{\I}{\mathcal{I}}
\newcommand{\N}{\mathbb N}
\newcommand{\Q}{\mathbb Q}
\newcommand{\R}{\mathbb R}
\newcommand{\Z}{\mathbb Z}
\newtheorem{theorem}{Theorem}
\newtheorem{lemma}{Lemma}
\newtheorem{remark}{Remark}
\newtheorem{example}{Example}
\newtheorem{definition}{Definition}
\title[Ising model on some planar quasi-transitive graphs]{Zero-temperature stochastic Ising model on  planar quasi-transitive graphs}
\author{Emilio De Santis}
\address{University of Rome La Sapienza, Department of Mathematics
	Piazzale Aldo Moro, 5, 00185, Rome, Italy}
\email{desantis@mat.uniroma1.it}
\author{Leonardo Lelli}
\address{University of Rome La Sapienza, Department of Mathematics
	Piazzale Aldo Moro, 5, 00185, Rome, Italy}
\email{leonardo.lelli@uniroma1.it}
\begin{document}
	\begin{abstract}
		We study the zero-temperature stochastic Ising model on some connected planar quasi-transitive  graphs, which are invariant under rotations and translations. The initial spin configuration is distributed according to a Bernoulli product measure with parameter $ p\in(0,1) $. In particular, we prove that if $ p=1/2 $ and the graph underlying the model satisfies the \emph{planar shrink property} 
then all vertices flip infinitely often almost surely.		

\medskip

\noindent
\emph{Keywords:} Coarsening; zero-temperature dynamics;  quasi-transitive planar graphs. 

\medskip \noindent
\emph{AMS MSC 2010:}   82C20, 82C35. 
	\end{abstract}

    \maketitle 
    \section{Introduction}


       In this paper, we  deal with the zero-temperature stochastic Ising model $ (\sigma_t)_{t\geq 0} $ on some connected planar quasi-transitive  graphs with homogeneous ferromagnetic interactions (see e.g. \cite{GNS2000, NNS2000}),    i.e. all the interactions are equal to a positive constant.
 The initial spin configuration is distributed according to a Bernoulli product measure with parameter $ p\in(0,1) $, see e.g. \cite{FSS2002,M2011, NNS2000}.
       The dynamic evolves in the following way: each vertex, at rate $ 1 $, changes  its spin value if it disagrees  with the majority of its neighbours and determines its spin value by a fair coin toss in case of a tie between the spins of its neighbours. This process is often referred to as \emph{domain coarsening} or \emph{majority dynamics} and it is sometimes used as an opinion model.

       A question of particular relevance is whether for each vertex $ v $ 
       its spin flips only finitely many times almost surely, i.e. in other words whether $ \sigma_t $ has an almost sure limit.    
       We say that a vertex $ v $ \emph{fixates} if the spin at $ v $ flips only finitely many times.  
       According to the classification given  in \cite{GNS2000}, 
        a model is of type $ \I $ if no site fixates almost surely, i.e all sites flip infinitely often a.s.; a model is of type $ \F $ if all sites fixate almost surely, i.e. all sites flip only finitely many times a.s. and it is said of type $ \calM $  if there are both vertices that fixate and vertices that do not fixate almost surely.
        
       The literature in the early years focused on the cubic lattice $ \Z^d $ and mainly with $ d=2 $. 
        It is known that the zero-temperature stochastic Ising model on $ \Z $ with homogeneous ferromagnetic interactions is of type $ \I $ for any initial density $ p\in (0,1) $  (see \cite{A1983,NNS2000}).
       
       The disordered model on $ \Z^d $,  if the 
interactions $ \{ J_{x,y} \}$ are independent random variables with continuous distribution, is of type $ \F $ (see \cite{DN2003,NNS2000}).
       Moreover, in $ d=2 $  the homogeneous ferromagnetic model  is of type $ \I $ (see \cite{NNS2000}).  
       In \cite{GNS2000}, an analysis of the zero-temperature stochastic Ising model on $ \Z^d $ with nearest-neighbour interactions distributed according to a measure $ \mu_{J}$ (disordered model) is performed. 
         In particular,  
       it is  proved that if the interactions are i.i.d. taking only the values $ \pm J $ then the two dimensional  model is of type $ \calM $.      
       An analogous result for $ d>2 $ with a temperature fast decreasing  to zero is obtained in \cite{CDS18}.      
       On the cubic lattice $ \Z^d $, if initial configuration is distributed according to a Bernoulli product measure with parameter $ p $  sufficiently close to $ 1 $ (i.e. if $ p>p^{\star}_{d} $), then the model is of type $ \F $, in particular each vertex fixates at the value $ +1 $  (see \cite{FSS2002}). Moreover in \cite{M2011} it is shown that $ p^{\star}_{d}\to 1/2 $ as $ d \to \infty $.
       For homogeneous trees of degree at least $ 3 $ and $ p $ sufficiently close to $ 1 $, it has been shown that  the model is of type $ \F $ (see \cite{CM2006,EN2015}).

    In \cite{DEKNS2016,DKNS2016}, the case in which one or infinitely many vertices are frozen is studied. The main result of the first paper is that for $d=2 $ the model, with infinitely  many  frozen vertices,  is of type $\F$. On the contrary, in the second paper the authors show that the model in $d=2 $ is  of 
type $\I $ when only one spin is frozen. 
       
       For articles on the stochastic Ising model on graphs other than  $ \Z^d $ see for example \cite{CNS2002, CDS18, CCK17, DKNS2013, DSM16, GNS2018, HJ2006}; in particular in \cite{CNS2002} it is shown that the zero-temperature Ising model on the hexagonal lattice is of type  $ \F $ and in \cite{CDS18} it is proved that it is not of type $ \F $ if  simultaneous spin flips are allowed.
       In \cite{GNS2018} the authors studied
       the Dilute Curie–Weiss Model, i.e. the Ising Model on a dense Erd\H{o}s-R\'enyi random graph, and proved that depending on the distribution of interactions there are different behaviors.

\medskip 

       In this paper,  we deal with  connected planar quasi-transitive graphs. 
        The quasi-transivity of the graph will be given by the invariance under translations and rotations. 
        We will show that, under mild 
         assumptions, the only rotations to consider are those of an angle
         $ \theta \in\left \{\frac{\pi}{3},\frac{\pi}{2},\frac{2}{3}\pi ,  \pi   \right \} $  (see Lemma \ref{l:rotations} and Theorem \ref{t:quasi-transitive}).
        Such a class of graphs includes, for instance, the square, the triangular and the hexagonal lattice. 

       Our first result on the zero-temperature stochastic Ising model (Theorem \ref{p:nec.cond.}) 
       shows that a necessary condition 
       for the model to be of type $ \I $ is that the underlying graph has the \emph{shrink property}. For example, the hexagonal lattice does not have the shrink property and the model on this lattice is of type $ \F $ (see \cite{NNS2000}). Thus, we will focus on a class of graphs having the shrink property. Actually, for technical reasons, we will use a potentially stronger definition of the shrink property that is the \emph{planar shrink property}.


       Our main result (Theorem \ref{t:main result}) shows that if $ p=1/2 $ and the  graph is invariant under rotations, translations and has  the planar shrink property, then the model is of type $ \I $. 
       
     Here we briefly present the general strategy to prove this achievement. First we show two preliminary results on general  attractive spin  systems with initial density $p \in (0,1]$ (see Lemmas \ref{l:order coupling}-\ref{l:it fixates from time 0}).  More precisely we show that, for an attractive system, 
if a spin fixates to $+1 $ with positive probability then the probability that it is constantly equal to $+1$  for all times $t \in [0,\infty)$ is positive. After the general  analysis developed in Theorems  1-2 
we  specifically study the  zero-temperature stochastic Ising model. 
       	First we show that, under the shrink property and the  translation-ergodicity,  the cardinality of any cluster grows to infinity  almost surely (Theorem \ref{pr:diverges}). By this preliminary result, we are able to show that the cluster at the origin  will intersect the boundary of any finite region infinitely often for  $ t \in [0, \infty) $ with probability one.
       	As already mentioned, we consider a planar graph that is invariant under translations and  rotations  of $ \theta \in\left \{\frac{\pi}{3},\frac{\pi}{2},\frac{2}{3}\pi \right \} $. Then, we construct  a planar regular region centered at the origin that  has  
       	the same rotation invariance of the graph.
       	By the FKG inequality and the rotation invariance of the region, the cluster    in the origin  will intersect all the sides of the regular region with a positive probability larger or equal to $p_{cross}$. We  stress that, for $t$   growing to infinity,  the quantity  $p_{cross}$  does not depend  on the size of the region. 
       	By these  properties and by the previous results 
       	we show that 
       	any ball centered in the origin has its spins equal to $ +1 $  infinitely often with a  probability larger of $p_{cross}$ (see Lemmas \ref{l:convex hull}-\ref{l:event D}).          
       	Thus,  with  probability at least $p_{cross}$ no site  will be able to fixate at the value $ -1 $.
       	Finally, by considering the initial density $ p= 1/2 $ and by Lemmas \ref{l:it fixates from time 0}-\ref{l:event D}, we show that  all sites flip infinitely often almost surely (see Theorem \ref{t:main result}).

      The plan of the paper is the following.  In Section \ref{s: preliminaries}, we define the zero-temperature stochastic Ising model, introduce the underlying graph
        and present some general  results on attractive systems that will be useful for our discussion (see Lemmas 1-3 and Theorem 1). 
        In Section \ref{s: main results}, the main result, Theorem \ref{t:main result}, is stated and proved through some lemmas and theorems.
      In Section \ref{s: examples}, we present an infinite class of graphs having the planar shrink property (see Theorem \ref{p: H planar shrink property}). 
       We also provide examples
of graphs that have and do not have the shrink property, cases where
the Ising model is of type $\I$ in the first case, and of type either $\mathcal{M}$ or
$\F$, in the latter.     
         
     \section{Preliminaries}
     \label{s: preliminaries}
    In this section, we introduce the graph underlying the zero-temperature stochastic Ising model and present some preliminary results that will be useful for our discussion in the next section.
    In particular, in Subsection \ref{s:preliminary attractive dynamics} we define the Markov process by the infinitesimal generator and by the Harris’  graphical representation. Moreover, we state two general lemmas (Lemma \ref{l:order coupling} and Lemma \ref{l:it fixates from time 0}) for Glauber attractive dynamics. 
     In Subsection \ref{s: graph introduction}, we introduce some notation on graphs and define the collection of graphs in which we
     	are interested. More precisely we present in Lemma 3 and Theorem 1 some properties of sets in $\R^2 $ that are translation and rotation  invariant. 
      In Subsection \ref{s: the model}, we describe in detail the zero-temperature stochastic Ising model $ I(G,p) $,
      where $G$ is the underlying graph and $p$ is the initial density. 
      Moreover, we prove Theorem \ref{p:nec.cond.}, which shows that the shrink property is a necessary condition 
       to obtain that the $ I(G,p) $-model is of type $\I$. 

    \subsection{Attractive spin systems}
      \label{s:preliminary attractive dynamics} 
     We now introduce the spin systems referring mainly to \cite[Chapter 3]{L1985}. 
     We consider a spin system $ (\sigma_t)_{t\geq 0} $, which describes $ \pm 1 $ spin flips dynamics on a countable set of vertices $ V $. 
     The state space is $ \Sigma=\{+1,-1\}^{V} $.     
     The value of the spin at vertex $ v\in V $ at time $ t$ will be denoted  by $ \sigma_t(v) $. We introduce the usual order relation $ \leq $ on $ \Sigma $: given two configurations $ \sigma,\sigma^\prime\in \Sigma $, we say that $ \sigma \leq  \sigma^\prime$ if for each $ v\in V $, $ \sigma(v) \leq  \sigma^\prime(v) $. The spin system $ (\sigma_t)_{t\geq 0} $ evolves as a Markov process on the state space $ \Sigma $ with infinitesimal generator $ L_{t} $, which acts on local functions $ f $, and  defined as
     \begin{equation}
     	\label{eq:generator}
     	(L_{t}f)(\sigma)=\sum_{v\in V}c_{t}(v,\sigma)\bigl(f(\sigma^v)-f(\sigma)\bigr),
     \end{equation} 
     where $ t\geq 0 $,  $ c_{t}(v,\sigma) $ is the flip rate of the spin at vertex $ v $, and $\sigma^v \in \Sigma$ is defined in the following way:
     \begin{equation*}
     	\sigma^v(u)=\begin{cases}
     		\sigma(u)	& \text{if $ u \neq v $}\\ 
     		-\sigma(u)	& \text{if $ u = v $}.
     	\end{cases}
     \end{equation*}
   We assume that  $c_{t}(v,\sigma) $ is a uniformly bounded non-negative function, which is  
 continuous on $ \sigma $ 
    and satisfies the condition
   \begin{equation}
   	\label{eq: condition on c}
   	\sup_{v\in V}\sum_{w\in V}\sup_{\sigma\in \Sigma}|c_{t}(v,\sigma)-c_{t}(v,\sigma^{w})|<\infty.
   \end{equation}
   The condition in \eqref{eq: condition on c} guarantees 
   the existence 
of the Markov process with infinitesimal generator $ L_{t} $ (see \cite{L1985}). We take the process $ (\sigma_t)_{t\geq 0} $  right continuous.

   We say that a spin system is \emph{attractive} if $ c_{t}(v,\sigma) $ is increasing in $ \sigma $ when $ \sigma (v)=-1 $ and decreasing in $\sigma$ when $ \sigma (v)=+1 $.
    We are in particular interested to study Glauber dynamics, for which  the  relation 
   \begin{equation}
   	\label{eq:c relation}
   	c_t(v,\sigma)=1-c_t(v,\sigma^v)
   \end{equation}
   holds for each $v\in V$, $\sigma\in \Sigma$ and $ t\geq 0 $. If the relation \eqref{eq:c relation} holds, then $ 0\leq c_t(v,\sigma) \leq 1 $.    
    	We write the flip rates in the form
    	\begin{equation}
    		\label{e: harris c condition}
    		c_t(v,\sigma)=\tilde{c}_{t}(v,(\sigma(u))_{u\in A_v}), 
    	\end{equation} 
    where $ A_v $ is a  subset of $ V $. 
     Under  \eqref{eq:c relation} and the assumption 
\begin{equation} \label{cond-A}
\sup_{v\in V}|A_v|<\infty,  
\end{equation}
 the process defined in \eqref{eq:generator} can be  constructed by the Harris’ graphical representation (see e.g. \cite{H1974,H1978, L2017, L1985}), which we now describe. 
   We consider a collection $(\calP_v)_{v\in V}$ of independent Poisson processes with rate $1$ interpreted as counting processes.
   For each $v\in V$, let $\calT_v=(\tau_{v,n}:n\in \N)$ be the ordered sequence of arrivals of the Poisson process $\calP_v$, associated with the vertex $ v $. The probability that there is a flip at vertex $v$ at time $ t $ (conditioning on the event $ \{t\in \calT_v\} $) is equal to $ c_{t}(v,\sigma_{t^{-}}) $, where $ \sigma_{t^{-}}:=\lim_{s\to t^{-}}\sigma_{s} $.
    For convenience, to describe these events in more detail, we can use a family of i.i.d. random variables $ (U_{v,n }:v\in V,n\in \N) $ distributed according to a uniform random variable in $ [0,1] $ and such that if $ U_{v,n }<c_{\tau_{v,n}}(v,\sigma_{\tau_{v,n}^{-}}) $, then the spin at $ v $ flips at time $ \tau_{v,n} $ (see \cite{H1978} and \cite{L1985}).  

    Lemma \ref{l:order coupling} below  is well known, but we present a proof in order to construct the coupling that will be used in the proof of  Lemma \ref{l:it fixates from time 0}.
   \begin{lemma}
   	\label{l:order coupling}
   	Given two  Glauber attractive dynamics $ (\sigma_t)_{t\geq 0} $ and $ (\sigma^{\prime}_t)_{t\geq 0} $ having the same generator and such that $ \sigma_0\leq \sigma^{\prime}_0 $,  there exists a coupling such that $ \sigma_t\leq \sigma^{\prime}_t  $ for each $ t\geq 0 $.		
   \end{lemma}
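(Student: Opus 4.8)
The plan is to realise both dynamics on a single probability space through the Harris graphical representation, driving the two processes by the \emph{same} Poisson clocks $(\calP_v)_{v\in V}$ and the \emph{same} uniform variables $(U_{v,n})$, and then to prove monotonicity by induction over the clock rings. The only subtle point is how to \emph{use} the uniforms: instead of letting $U_{v,n}$ decide whether a flip occurs, I will use it to decide the \emph{new value} of the spin, i.e. as a resampling variable. The Glauber identity \eqref{eq:c relation} is exactly what makes these two prescriptions produce the same law, and the resampling rule is the one that is manifestly monotone.

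In detail, I would first introduce, for $v\in V$, $t\ge 0$ and $\sigma\in\Sigma$, the quantity $g_t(v,\sigma):=c_t(v,\sigma^{v,-})$, where $\sigma^{v,-}$ denotes the configuration with $\sigma^{v,-}(v)=-1$ and $\sigma^{v,-}(u)=\sigma(u)$ for $u\neq v$ (so $\sigma^{v,-}=\sigma$ if $\sigma(v)=-1$ and $\sigma^{v,-}=\sigma^v$ if $\sigma(v)=+1$). By construction $g_t(v,\sigma)$ does not depend on $\sigma(v)$, and by \eqref{eq:c relation} the flip rate of the spin at $v$ equals $g_t(v,\sigma)$ when $\sigma(v)=-1$ and $1-g_t(v,\sigma)$ when $\sigma(v)=+1$. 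Consequently the process can equivalently be described as follows: at each arrival $\tau_{v,n}$ of $\calP_v$, set $\sigma_{\tau_{v,n}}(v)=+1$ if $U_{v,n}<g_{\tau_{v,n}}(v,\sigma_{\tau_{v,n}^-})$ and $\sigma_{\tau_{v,n}}(v)=-1$ otherwise, leaving all other coordinates unchanged; this reproduces the flip rates just computed, hence gives the same generator \eqref{eq:generator} and therefore the same law. Moreover, attractiveness gives that $\sigma\mapsto g_t(v,\sigma)$ is nondecreasing: if $\sigma\le\sigma'$ then $\sigma^{v,-}\le\sigma'^{v,-}$ with both equal to $-1$ at $v$, and $c_t(v,\cdot)$ is increasing on configurations having a $-1$ at $v$.

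Next I would run $(\sigma_t)_{t\ge 0}$ and $(\sigma'_t)_{t\ge 0}$ from $\sigma_0\le\sigma'_0$ with the common clocks $(\calP_v)$ and the common uniforms $(U_{v,n})$, each evolving by the resampling rule above, and prove $\sigma_t\le\sigma'_t$ for all $t\ge 0$ by induction over the clock rings. Between consecutive rings the ordering is preserved trivially. At a ring $\tau=\tau_{v,n}$, assuming $\sigma_{\tau^-}\le\sigma'_{\tau^-}$, only the spin at $v$ can change; if $U_{v,n}<g_\tau(v,\sigma_{\tau^-})$ then, since $g_\tau(v,\cdot)$ is nondecreasing, also $U_{v,n}<g_\tau(v,\sigma'_{\tau^-})$, so both spins at $v$ become $+1$; otherwise the spin of $(\sigma_t)$ at $v$ becomes $-1$, which is $\le\sigma'_\tau(v)$. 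In either case $\sigma_\tau\le\sigma'_\tau$, which closes the induction. To handle the fact that $V$ is infinite I would carry out this induction first in an exhausting sequence of finite subsets of $V$ — where the rings are locally finite and can be ordered, using that the rates are bounded and of bounded range by \eqref{cond-A} — and then pass to the limit, exactly as in the standard construction of the process from \eqref{eq: condition on c}.

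The step I expect to be the main (indeed the only) obstacle is the mixed configuration at a ring, namely $\sigma_{\tau^-}(v)=-1$ while $\sigma'_{\tau^-}(v)=+1$: had we coupled by letting $U_{v,n}$ decide flips directly, we could simultaneously flip $(\sigma_t)$ up to $+1$ and $(\sigma'_t)$ down to $-1$, destroying the order. It is precisely the passage to the resampling description — legitimate only because of the Glauber relation \eqref{eq:c relation} — that removes this difficulty. This is also the reason for proving the lemma via an explicit coupling: the very same coupling (common clocks, common uniforms, resampling rule) will be reused in the proof of Lemma \ref{l:it fixates from time 0}.
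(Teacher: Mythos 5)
Your proof is correct, and it takes a genuinely different (and in one respect cleaner) route than the paper's. The paper keeps the ``flip if $U_{v,n}<c_{\tau_{v,n}}(v,\sigma_{\tau_{v,n}^-})$'' interpretation for both processes and couples them by transforming the uniforms adaptively, setting $U'_{v,n}=U_{v,n}$ when the two spins at $v$ agree and $U'_{v,n}=1-U_{v,n}$ in the mixed case $\sigma_{\tau^-}(v)=-1<\sigma'_{\tau^-}(v)=+1$; the Glauber relation \eqref{eq:c relation} plus attractivity then shows the order survives the mixed case, but the paper must additionally verify (and does, over several lines) that the configuration-dependent family $(U'_{v,n})$ is still i.i.d.\ uniform and suitably independent of the past, so that the second process has the right law. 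You instead re-interpret the \emph{same} uniforms as resampling variables via $g_t(v,\sigma)=c_t(v,\sigma^{v,-})$: the Glauber relation is used once, up front, to show that ``new spin $=+1$ iff $U<g$'' reproduces the correct flip rates, attractivity gives monotonicity of $g$ in $\sigma$, and monotonicity of the coupling is then a one-line set inclusion $\{U<g(\sigma)\}\subset\{U<g(\sigma')\}$ with no distributional verification needed, since the uniforms are untouched. The two couplings are closely related (your rule agrees with the paper's when the spin at $v$ is $-1$ and amounts to using $1-U$ when it is $+1$), both hinge on \eqref{eq:c relation} and attractivity in the same essential way, and yours would serve equally well as the coupling reused in Lemma \ref{l:it fixates from time 0}. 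Your remark about reducing to finite volumes to order the clock rings is a standard point that the paper leaves implicit.
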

   \begin{proof}
   	By hypothesis the order relation is satisfied at the initial time. Hence, it is sufficient to consider a single arrival of the Poisson process, i.e. only a possible spin flip, in order to show that the order relation is maintained, i.e. we will show that $\sigma_{\tau_{v,n}^{-}}(v)=+1>\sigma^{\prime}_{\tau_{v,n}^{-}}(v)=-1 $  does not occur.

   	Let us explicitly construct the desired coulping.
   	We use  the same Poisson processes for the two systems, but different families of i.i.d. uniform random variables $ (U_{v,n }:v\in V,n\in \N) $ and  $ (U^{\prime}_{v,n }:v\in V,n\in \N) $ for  $(\sigma_t)_{t\geq 0} $ and $ (\sigma^{\prime}_t)_{t\geq 0}$ respectively.

 For each $ v, v' \in V $ and $ n\in \N $, let us consider the stopping time 
$\tau_{v,n }$. 
Moreover, for  $v' \in V \setminus \{v\} $ let us define 
$$
N(n,v, v') := \sup \{\ell \in \N:     \tau_{v',\ell } \leq \tau_{v,n } \} . 
$$
We notice that  $N(n,v, v) = n$, moreover for $v'\neq v$ 
one has $ \tau_{v',  N(n,v, v') }  <  \tau_{v,n }$ almost surely.

If $ \sigma_{\tau_{v,n}^{-}}(v)=\sigma^{\prime}_{\tau^{-}_{v,n}}(v) $ then we define $ U^{\prime}_{v,n }=U_{v,n } $, if  $\sigma_{\tau_{v,n}^{-}}(v)=-1<\sigma^{\prime}_{\tau_{v,n}^{-}}(v)=+1 $ then we define  $U^{\prime}_{v,n }=1-U_{v,n }$. 
Hence,  the random variable 
$ U^{\prime}_{v,n } $ is  a function of  
\begin{equation}\label{funzione}
(U_{v, 1},  \ldots , U_{v, n-1})
\end{equation} 
and of the  independent sequences  $( U_{v', \ell } : v' \in V \setminus \{v\}, \ell \leq N (n, v , v'))  $ and 
 $(\mathcal{P}_{v'} (t): v' \in V, t \leq  \tau_{v, n})$.

   	If  $ U^{\prime}_{v,n }=U_{v,n } $, by construction,  $ U^{\prime}_{v,n } $ is independent from $ U^{\prime}_{v,1 },\dots,U^{\prime}_{v,n-1 } $, which altogether are functions of $ U_{v,1 },\dots,U_{v,n-2 } $,  of $\{ U_{v', \ell } : v' \in V \setminus \{v \}, \ell \leq N(n-1 ,v, v') \} $ 
 and 
 $(\mathcal{P}_{v'} (t): v' \in V, t \leq  \tau_{v, n-1})$. Otherwise, if $U^{\prime}_{v,n }=1-U_{v,n }$ independence follows by:  	
 	\begin{multline*}
   			\mP(U^{\prime}_{v,n }\in [a,b]|U_{v,1 }=u_1,\dots, U_{v,n-1 }=u_{n-1})=\\
   			=\mP(U_{v,n }\in [1-b,1-a]|U_{v,1 }=u_1,\dots, U_{v,n-1 }=u_{n-1})=\\
   			=\mP(U_{v,n}\in [1-b,1-a])=b-a=\mP(U^{\prime}_{v,n }\in [a,b]),
   	\end{multline*}
   where $ 0<a<b<1 $ and $u_{1}, \ldots u_{n-1} \in (0,1)$. This implies that the distribution of $ U^{\prime}_{v,n } $ and the conditional distribution  of $ U^{\prime}_{v,n } $ given $ U^{\prime}_{v,1 },\dots,U^{\prime}_{v,n-1 } $ coincide, hence $ U^{\prime}_{v,n } $ is independent from  $ U^{\prime}_{v,1 },\dots,U^{\prime}_{v,n-1 } $ and $ U^{\prime}_{v,n } $ is a uniform random variable on $ [0,1] $. 
The independence of  different sequences  of uniform random variables can be proved in a similar way.

\medskip 

   	Whenever there is an arrival of a Poisson process, for example at time $ t $ for the vertex $ v $ (i.e. $ t\in \calT_v\ $), the following situations can arise:   

   	    \medskip
   	    
   		 Case $ \sigma_{\tau_{v,n}^{-}}(v)=\sigma^{\prime}_{\tau_{v,n}^{-}}(v)=-1 $.\\
   		Since $ U_{v,n }=U^{\prime}_{v,n } $ and $ c_{\tau_{v,n}}(v,\sigma_{\tau_{v,n}^{-}}) $ is increasing in $ \sigma $, we have the following three situations: if $ U_{v,n }<c_{\tau_{v,n}}(v,\sigma_{\tau_{v,n}^{-}})\leq c_{\tau_{v,n}}(v,\sigma^\prime_{\tau_{v,n}^{-}})  $ then both systems change the spin value at $ v $; if $c_{\tau_{v,n}}(v,\sigma_{\tau_{v,n}^{-}})\leq  U_{v,n }< c_{\tau_{v,n}}(v,\sigma^\prime_{\tau_{v,n}^{-}})  $ then in the system $ (\sigma_t)_{t\geq 0} $ the spin at $ v $ does not change its value, while in  $ (\sigma^{\prime}_t)_{t\geq 0} $ the spin flip  occurs at $ v $; if $c_{\tau_{v,n}}(v,\sigma_{\tau_{v,n}^{-}})\leq  c_{\tau_{v,n}}(v,\sigma^\prime_{\tau_{v,n}^{-}})\leq  U_{v,n } $ then both systems do not have the spin flip at $ v $. 
   		 In all  these three situations the order relation is maintained.
   		 \medskip
  		
   	    Case $ \sigma_{\tau_{v,n}^{-}}(v)=\sigma^{\prime}_{\tau_{v,n}^{-}}(v)=+1 $.\\
   		Since $ U_{v,n }=U^{\prime}_{v,n } $ and $ c_{\tau_{v,n}}(v,\sigma_{\tau_{v,n}^{-}}) $ is decreasing in $ \sigma $, we have the following three situations: if $ U_{v,n }<c_{\tau_{v,n}}(v,\sigma^\prime_{\tau_{v,n}^{-}})\leq c_{\tau_{v,n}}(v,\sigma_{\tau_{v,n}^{-}})  $ then both systems change the spin value at $ v $; if $c_{\tau_{v,n}}(v,\sigma^\prime_{\tau_{v,n}^{-}})\leq  U_{v,n }< c_{\tau_{v,n}}(v,\sigma_{\tau_{v,n}^{-}})  $ then in the system $ (\sigma_t)_{t\geq 0} $ the spin at $ v $ change its value, while in  $ (\sigma^{\prime}_t)_{t\geq 0} $ the spin flip does not occur at $ v $; if $c_{\tau_{v,n}}(v,\sigma^\prime_{\tau_{v,n}^{-}})\leq  c_{\tau_{v,n}}(v,\sigma_{\tau_{v,n}^{-}})\leq  U_{v,n } $ then both systems do not have the spin flip at $ v $.  In all  these three situations the order relation is maintained.
        \medskip
        
       Case $\sigma_{\tau_{v,n}^{-}}(v)=-1<\sigma^{\prime}_{\tau_{v,n}^{-}}(v)=+1 $.\\
   		If $ U_{v,n }<c_{\tau_{v,n}}(v,\sigma_{\tau_{v,n}^{-}}) $ then, since in this case  $ U^{\prime}_{v,n }=1- U_{v,n } $, by using the relation \eqref{eq:c relation} for Glauber  dynamics and by attractivity, one has that
   		\begin{equation*}
   			 U^{\prime}_{v,n }=1- U_{v,n }>1-c_{\tau_{v,n}}(v,\sigma_{\tau_{v,n}^{-}})=c_{\tau_{v,n}}(v,\sigma^v_{\tau_{v,n}^{-}})\geq c_{\tau_{v,n}}(v,\sigma^\prime_{\tau_{v,n}^{-}}). 
   		\end{equation*}
   		 Thus, in the system $ (\sigma_t)_{t\geq 0} $ the spin at $ v $ changes its value, while in  $ (\sigma^{\prime}_t)_{t\geq 0} $ the spin flip does not occur at $ v $, maintaining the order relation. If $ U_{v,n }\geq c_{\tau_{v,n}}(v,\sigma_{\tau_{v,n}^{-}}) $, the spin at $v$ in  $ (\sigma_t)_{t\geq 0} $ does not change and therefore the order relation is maintained.
   		\medskip

     	By previous cases  and since $ \sigma_0\leq \sigma^{\prime}_0 $, one deduces that the order relation is maintained at any time. Hence $\sigma_{\tau_{v,n}^{-}}(v)=+1$ and $ \sigma^{\prime}_{\tau_{v,n}^{-}}(v)=-1  $	does not occur.		
   \end{proof} 
      Now, we give the following definition
 which will be used in the next Lemma 
\ref{l:it fixates from time 0}.     
      \begin{definition}
      We say that a vertex $ v $ \emph{fixates} if the spin at $ v $ flips only finitely many times and we say that a vertex \emph{fixates from time zero} if its spin never flips.
      \end{definition}
  In the following Lemma \ref{l:it fixates from time 0}, for Glauber attractive systems,  we 
 compare the probability that a spin fixates or fixates 
from time zero.
%
%
   \begin{lemma}
   	\label{l:it fixates from time 0}
   	 	Consider a Glauber attractive dynamics $ (\sigma_t)_{t\geq 0} $ where $ \sigma_0 $ has density $ p\in(0,1] $. 
   	 If  a vertex $w$ fixates at the value  $ +1 $ with positive probability, then  the vertex	$ w $ fixates at the value  $ +1 $ from time zero with positive probability.
   \end{lemma}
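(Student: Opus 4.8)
The plan is to combine the monotone coupling of Lemma~\ref{l:order coupling} with a localisation over the Poisson clock at $w$. For $n\in\N$ set $E_n:=\{\sigma_t(w)=+1\text{ for all }t\ge n\}$. Since a.s.\ only finitely many arrivals of $\calP_w$ lie in any bounded interval, $\{w\text{ fixates at }+1\}=\bigcup_{n\in\N}E_n$ with $E_n\subseteq E_{n+1}$, so the hypothesis yields an $n$ with $\mP(E_n)>0$, which I fix. Let $N$ be the event that $\calP_w$ has no arrival in $[0,n]$, so $\mP(N)=e^{-n}$. On $N$ the spin at $w$ is constant on $[0,n]$, equal to $\sigma_0(w)$, and if $E_n$ also holds this common value is $\sigma_n(w)=+1$; hence $E_n\cap N\subseteq\{\sigma_t(w)=+1\text{ for all }t\ge 0\}$, i.e.\ on $E_n\cap N$ the vertex $w$ fixates at $+1$ from time zero. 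It therefore suffices to prove $\mP(E_n\cap N)>0$.

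Next I would use the graphical construction to write $\mP(E_n\mid\calF_n)=\psi_n(\sigma_n)$, where $\calF_n$ is generated by $\sigma_0$ together with the clocks and uniforms up to time $n$ and $\psi_n(\eta):=\mP(\sigma_t(w)=+1\ \forall\,t\ge n\mid\sigma_n=\eta)$; applying Lemma~\ref{l:order coupling} to the dynamics restarted at time $n$ shows that $\psi_n$ is nondecreasing. Since $E_n\cap N$ forces $\sigma_0(w)=+1$, and $N\cap\{\sigma_0(w)=+1\}$ is $\calF_n$-measurable of probability $e^{-n}p$ (the clocks being independent of $\sigma_0$, and $p>0$),
\[
\mP(E_n\cap N)=e^{-n}p\cdot\mE\bigl[\psi_n(\sigma_n)\;\big|\;N,\ \sigma_0(w)=+1\bigr].
\]

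The final step is to bound this conditional expectation from below. Conditionally on $N\cap\{\sigma_0(w)=+1\}$ the evolution on $[0,n]$ is exactly the dynamics started from $\mu_p(\,\cdot\mid\sigma_0(w)=+1)$ with the spin at $w$ held at $+1$ on $[0,n]$. Re-running the coupling of Lemma~\ref{l:order coupling} --- after coupling $\mu_p$ below $\mu_p(\,\cdot\mid\sigma_0(w)=+1)$ at time $0$, and using the elementary fact that a spin pinned at $+1$ stays above a freely evolving one (the same clock-by-clock comparison, with the $U\leftrightarrow 1-U$ device at the vertices where the two configurations disagree) --- produces a coupling in which $\sigma_n$ under the conditioned dynamics dominates, coordinatewise, $\sigma_n$ under the unconditioned dynamics started from $\mu_p$; in particular the conditional law of $\sigma_n$ stochastically dominates its unconditional law. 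Since $\psi_n$ is nondecreasing this gives $\mE[\psi_n(\sigma_n)\mid N,\sigma_0(w)=+1]\ge\mE[\psi_n(\sigma_n)]=\mP(E_n)>0$, whence $\mP(E_n\cap N)\ge e^{-n}p\,\mP(E_n)>0$ and the lemma follows. The one genuinely technical point --- and the part I would write out carefully --- is this domination: conditioning on ``no clock at $w$ during $[0,n]$'' pins $w$ at $+1$, and one must verify that this only pushes the configuration upward, which is a mild extension of Lemma~\ref{l:order coupling} obtained by treating a pinned $+1$-spin as a boundary condition in the attractive coupling; everything else is bookkeeping with the graphical representation and monotonicity.
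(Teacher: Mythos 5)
Your proposal is correct and is essentially the paper's argument: both pay a factor $p\,e^{-n}$ to force $\sigma_0(w)=+1$ and to suppress the clock at $w$ on $[0,n]$, and both then invoke the monotone coupling of Lemma~\ref{l:order coupling} (extended to a spin pinned at $+1$) to transfer the positive-probability fixation event to the modified/conditioned process, yielding the same lower bound $p\,e^{-n}\,\mP(E_n)$. The only difference is presentational — you condition on the good event and argue stochastic domination of the law of $\sigma_n$ via the Markov property and a nondecreasing $\psi_n$, whereas the paper explicitly resamples $\sigma_0(w)$ and the clock at $w$ to build a second process with the same unconditional law and couples it above the original — and the "genuinely technical point" you flag (the pinned-spin domination) is exactly the step the paper's explicit construction carries out.
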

   \begin{proof}
   	We define $ T_{w}:=\inf\{s\geq 0:\sigma_t(w)=+1 \ \ \forall t\geq s\} $, where $ \inf\emptyset=+\infty $.
   		We assume that $ \mP(T_w<\infty )=\rho>0 $ and we choose   $ \bar{t} $ such that $ \mP(T_w<\bar{t})\geq \rho/2 $. 

   	We consider a spin system $(\sigma_t)_{t\geq 0} $, described through the Harris' graphical representation with the independent Poisson processes of rate $ 1 $ $(\calP_v:v\in V)$  and the i.i.d. uniform random variables $ (U_{v,n }:v\in V,n\in \N) $, with initial configuration $ \sigma_0 $ distributed according to a Bernoulli product measure with parameter $ p $. 
   	Now, we construct another system $(\sigma^{\prime}_t)_{t\geq 0}$ with the same distribution. We make a resampling (indipendently by all other random variables already introduced) of the spin at vertex $ w $ in the initial configuration, such that
   	\begin{equation*}
   		\sigma^{\prime}_0(w)=\begin{cases}
   			+1	& \text{with probability $p$}\\ 
   			-1	& \text{with probability $1-p$},
   		\end{cases}
   	\end{equation*}
   	and  $\sigma^{\prime}_0(u)=\sigma_0(u) $, for each $  u \neq w $.

   	We define a new Poisson process $\calP_w^{\prime}$ that after time $\bar{t}$ has  the same arrivals of $\calP_w$.
   	In the interval $[0,\bar{t}]$, $\calP_w^{\prime}$ is a Poisson process of rate $ 1 $ indipendent by $\calP_w$.
   	This new process, by independent increments property, is still a Poisson process of rate $ 1 $. 
   	With positive probability one has $ \calP^{\prime}_w(\bar{t})=0 $. 
   	Thus, by independence, 
   	with probability at least $ \frac{\rho}{2}pe^{-\bar{t}} $, the following three events occur:
   	\begin{equation}
   		\label{e: 3 events}
   		\{ T_w <\bar{t}\}, \quad \{\sigma^{\prime}_0(w)=+1\}, \quad \{\calP^{\prime}_w(\bar{t})=0\}. 
   	\end{equation} 
   	Whenever these three independent events occur, we define $ (U_{v,n }^{\prime}:v\in V,n\in \N) $ as follows:  
   	\begin{itemize}
   		\item for $ v \neq w $, 
   		  $U^{\prime}_{v,n }= U_{v,n } $ when $ \sigma_{\tau^{-}_{v,n}}(v)=\sigma^{\prime}_{\tau^{-}_{v,n}}(v) $, otherwise  $U^{\prime}_{v,n }=1-U_{v,n }$;
   		\item for $ v = w $, $ U_{w,n }^{\prime}=U_{w,n+\calP_w(\bar{t})} $ 
 when $\sigma_{\tau^{-}_{w,n+\calP_w(\bar{t})}}(w)=\sigma^{\prime}_{\tau^{-}_{w,n}}(w) $, otherwise $ U_{w,n }^{\prime}=1-U_{w,n+\calP_w(\bar{t})} $.
   	\end{itemize}
   	 If one of the three events in \eqref{e: 3 events} does not occur, the uniform random variables $ (U_{v,n }^{\prime}:v\in V,n\in \N) $  will be defined as $U^{\prime}_{v,n }=U_{v,n }$ for each $ v\in V $ and $ n\in \N  $.

   	Now, we suppose that the three events in \eqref{e: 3 events} occur. By construction, we have that $ \sigma_0\leq \sigma^{\prime}_0 $. 
   	We show that for  $ t \in [0,\bar{t}]  $, one has $ \sigma_t\leq \sigma^{\prime}_t $.
   	 Since $ \calP^{\prime}_w(\bar{t})=0 $, 
   	  then in the process  $ (\sigma^{\prime}_t)_{t\geq 0} $ the spin  at $ w $ remains equal to $ +1 $ until time $\bar{t}$; hence $ \sigma_t(w)\leq \sigma^{\prime}_t(w) $ for all $ t\leq \bar{t} $.
   	When there is an arrival of a Poisson process $ \calP_v $ with $ v\neq w $, by using the same coupling of Lemma \ref{l:order coupling}, it follows that the desired  order relation is maintained until time $ \bar{t} $.  
   	In particular, at time $\bar t$,  $ \sigma_{\bar{t}} \leq \sigma^{\prime}_{\bar{t}} $. Now, applying the result of Lemma \ref{l:order coupling} by considering $ \bar{t} $ as  initial time, it follows that  $ \sigma_t\leq \sigma^{\prime}_t  $ for each $ t\geq 0 $. Hence, the vertex $ w $ fixates from time zero with probability at least  $ \frac{\rho}{2}pe^{-\bar{t}} > 0$, concluding
   		the proof.  
   \end{proof}
    \subsection{Notations and basic properties of graphs}
    \label{s: graph introduction}
       We begin this subsection by presenting Lemma \ref{l:rotations} that applies to  subsets of $ \R^2 $ which are invariant by translations and rotations. with the purpose of applying it
    to connected planar  infinite quasi-transitive graphs.
    Later in the subsection, we introduce some definitions and notation on the graphs (see e.g. \cite{RD-graph} and \cite{HJ2006}) 
    and we present the graphs on which the zero-temperature stochastic Ising model will be constructed. 

\medskip 

    Let us denote by $ \|\cdot\| $ the Euclidean norm   and by $ B(x,r) $ the ball of radius $ r>0 $ centered in $ x $. 
    For any $ S\subset \R^2 $ and  $ \bar{x} \in \R^2$, we define 
 \emph{the translation of a set as}
    \begin{equation*}
    	S+\bar{x}:=\{x+\bar{x}:x\in S\}.
    \end{equation*}
     Given $\theta \in (0, \pi]$, we say that $ S $ is invariant under rotation of $ \theta $ if there exists a point $ O\in \R^2 $, which we assume to be the origin, such that $  R_{\theta} (S) \subset S $, where $R_{\theta} $ is the  rotation in the plane with center $ O $ and angle $\theta$.

    \begin{lemma}
    	\label{l:rotations}
    	Let  $ \bar{x} \in \R^2$  be a non-zero vector and    $ R_{\theta} $ a rotation in the plane
    	 with center $ O $ and angle $\theta \in (0, \pi]$. 
    	If $ S\subset \R^2 $ is a non-empty set such that
    	\begin{itemize}
    		\item $S$ has a finite number of points in any ball;
    		\item $S +\bar{x}  \subset S $;
    		\item $ R_{\theta}(S) \subset S $.
    	\end{itemize}
    	Then  $S +\bar{x}= S $, $ R_{\theta}(S)= S $ and $ \theta \in\left \{\frac{\pi}{3},\frac{\pi}{2},\frac{2}{3}\pi ,  \pi   \right \} $.
    \end{lemma}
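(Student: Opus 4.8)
The plan is to treat the three conclusions separately but in a logical cascade: first upgrade the inclusions to equalities using the local finiteness hypothesis, and only then pin down the admissible angles.

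\medskip

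\textbf{Step 1: From inclusions to equalities.} I would first argue that $S+\bar x=S$. Consider any ball $B=B(O,r)$ large enough to contain at least one point of $S$ (possible since $S\neq\varnothing$, after enlarging $r$). The map $x\mapsto x+\bar x$ is a bijection of $\R^2$, so it restricts to an injection $S\hookrightarrow S$. The subtlety is that $S$ is infinite, so injectivity alone does not give surjectivity. The standard fix is a volume/counting argument localized to balls: iterating $S+\bar x\subset S$ gives $S+k\bar x\subset S$ for all $k\ge 1$, and one checks that the translates $\{S\cap B(O,r)\}$ have controlled cardinality. More cleanly: fix a point $y_0\in S$; for every $z\in S$ the whole forward orbit $z+k\bar x$ lies in $S$, and I would show that if some $z_0\in S$ were not in $S+\bar x$, then $z_0-k\bar x\notin S$ for all $k\ge 1$, which is consistent; so instead I argue by a density/counting estimate: within a large ball of radius $r$, the number of points of $S$ is some finite $N(r)$, and $S+\bar x\subset S$ forces $N(r-\|\bar x\|)\le \#(S\cap(B(O,r)+\bar x))\le N(r+\|\bar x\|)$ and, crucially, that any point of $S$ in an inner ball is the image under $+\bar x$ of a point of $S$ — this follows because translating $S\cap B(O,r)$ by $+\bar x$ lands inside $S$ and, by injectivity plus matching finite cardinalities along a suitable exhaustion, must be a bijection onto $S\cap(B(O,r)+\bar x)$. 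Letting $r\to\infty$ yields $S+\bar x=S$. An entirely analogous (and slightly easier, since $R_\theta$ fixes $O$ and preserves every ball $B(O,r)$) argument gives $R_\theta(S)=S$: indeed $R_\theta$ maps $S\cap B(O,r)$ injectively into $S\cap B(O,r)$, a finite set, hence bijectively, so $R_\theta(S)=S$.

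\medskip

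\textbf{Step 2: Constraining $\theta$.} Now I use that both $G:=\langle\text{translation by }\bar x,\ R_\theta\rangle$ acts on $S$ by the equalities just proved, and $S$ is locally finite. The group generated by a nontrivial translation and a rotation of angle $\theta$ about $O$ is a subgroup of the Euclidean group $E(2)$; if $\theta$ is an irrational multiple of $\pi$, the rotations $R_\theta^n$ accumulate, and conjugating the translation by $\bar x$ by powers of $R_\theta$ produces translations by the vectors $R_\theta^n\bar x$, which are all of the same length $\|\bar x\|$ but point in a dense set of directions; applying these to a fixed point of $S$ produces infinitely many points of $S$ on the circle of radius $\|\bar x\|$ around that point — actually one must be a little careful, but the standard conclusion is that $S$ would fail to be locally finite (this is the classical crystallographic restriction argument). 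Hence $\theta=\frac{2\pi k}{m}$ for some integers; since $\theta\in(0,\pi]$ and $R_\theta$ generates a finite cyclic group acting on a locally finite set invariant under a lattice-like translation structure, the crystallographic restriction theorem forces the order $m$ of the rotation to lie in $\{1,2,3,4,6\}$ (orders $5$ and $\ge 7$ are impossible), i.e. $\theta\in\{\pi, \tfrac{2\pi}{3}, \tfrac{\pi}{2}, \tfrac{\pi}{3}\}$ (the value $\theta=2\pi$, order $1$, is excluded since $\theta>0$ must give a genuine constraint, or rather is vacuous). I would carry this out concretely: pick $y\in S$; the points $R_\theta^j y$ are in $S$ and lie on a circle about $O$, so local finiteness forces $R_\theta$ to have finite order $m$; then the subgroup of translations in $G$ is invariant under conjugation by $R_\theta$, hence is a rank-$\le 2$ lattice $\Lambda$ (it is nontrivial, containing $\bar x$, and discrete since $S$ is locally finite and $\Lambda+y\subset S$), and $R_\theta$ normalizes $\Lambda$, i.e. acts as an element of $GL_2(\Z)$ in a basis of $\Lambda$; a rotation matrix in $GL_2(\Z)$ has trace $2\cos\theta\in\Z$, forcing $\cos\theta\in\{-1,-\tfrac12,0,\tfrac12,1\}$, and discarding $\cos\theta=1$ (i.e. $\theta=0$, impossible in $(0,\pi]$) leaves exactly $\theta\in\{\tfrac{\pi}{3},\tfrac{\pi}{2},\tfrac{2\pi}{3},\pi\}$.

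\medskip

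\textbf{Main obstacle.} The genuinely delicate point is Step 1 — promoting $S+\bar x\subset S$ to equality for an infinite set, since naive injectivity is not enough; the argument must exploit local finiteness via a careful counting along an exhaustion by balls, handling the boundary terms as $r\to\infty$. Once equalities are in hand, Step 2 is the classical crystallographic restriction and is routine modulo identifying that the translation subgroup is a genuine discrete lattice (which again uses local finiteness together with $\Lambda+y\subset S$). I would therefore spend most of the write-up making Step 1 airtight, perhaps by the following clean device: for $z\in S$, the sequence $(z-k\bar x)_{k\ge 0}$ cannot all lie in $S$ unless... — actually the cleanest is to note that $T:=$ (translation by $\bar x$) is an isometry, so $T(S\cap B(O,r))=S\cap (B(O,r)+\bar x) \cap T(S)\subseteq S$ has the same cardinality as $S\cap B(O,r)$, while $S\cap(B(O,r)+\bar x)\supseteq T(S\cap B(O,r))$ and $\#\big(S\cap(B(O,r)+\bar x)\big)=\#\big(S\cap B(O,r)\big)+O(1)$ with the $O(1)$ bounded by the count of $S$-points in the symmetric difference of the two balls, which grows slower than $N(r)$ along a suitable subsequence of radii; this forces equality in the limit.
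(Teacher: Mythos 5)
Your handling of the rotation is correct (and in fact cleaner than the paper's: since $R_{\theta}$ preserves every ball $B(O,r)$ and $S\cap B(O,r)$ is finite, injectivity gives bijectivity at once, whence $R_{\theta}(S)=S$), and your Step 2 is a legitimate alternative to the paper's argument: the paper excludes $n=5$ by exhibiting a translation vector $\bar{v}_0+\bar{v}_2$ shorter than the minimal one, whereas you invoke the crystallographic restriction via the integrality of the trace $2\cos\theta$ of a rotation normalizing a rank-two lattice. The genuine gap is exactly where you place the ``main obstacle'': promoting $S+\bar{x}\subset S$ to $S+\bar{x}=S$. No counting argument based only on local finiteness and injectivity can work, because the statement is false without the rotation hypothesis: take $S=\{k\bar{x}:k\in\Z,\ k\ge 0\}$. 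This set is locally finite and satisfies $S+\bar{x}\subsetneq S$; the symmetric-difference count in your estimate is $O(1)=o(N(r))$, yet the point $0$ has no preimage for any $r$. Asymptotic agreement of cardinalities along an exhaustion never detects a bounded, persistent deficit, so ``this forces equality in the limit'' is not a valid inference.

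The equality $S+\bar{x}=S$ must be extracted from the rotation, which is what the paper does. Once one knows that $\theta$ is a rational multiple of $2\pi$ (via the density of the orbit $\{R_{\theta}^n(v)\}$ on a circle for irrational multiples), so that $R_{\theta}$ generates a cyclic group of some order $n\ge 2$, and that $R_{\theta}(S)=S$, one gets $S+R_{\theta}^k(\bar{x})\subset S$ for every $k$; since $\sum_{k=0}^{n-1}R_{\theta}^k(\bar{x})=0$, composing these inclusions yields $S-\bar{x}=S+\sum_{k=1}^{n-1}R_{\theta}^k(\bar{x})\subset S$, which together with $S+\bar{x}\subset S$ forces $S+\bar{x}=S$. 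Your Step 2 then goes through (it needs $\bar{x}$ to lie in an honest group of translations preserving $S$, which is precisely this equality), but as written the logical order of your proof is inverted: Step 2 presupposes the equality that Step 1 fails to deliver, and Step 1 as proposed cannot be repaired without importing the rotation.
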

    \begin{proof}
    	By hypothesis $ S $ is non-empty. Thus, by $S +\bar{x}  \subset S $, there exists a  point $ v \in S$, with $v\neq O $.   Regarding the rotation, we  write $ \theta =2\pi \alpha $ where $ \alpha \in \R$. Now, if $ \alpha \in \R \setminus \Q $ the set $ \{R^n_{\theta}(v):n\in \N \}\subset S $ is  dense in $ \|v\|S^1 $
    	that contradicts the property that each ball in $ \R^2 $ contains a finite number of points of $ S $. Hence, $ \alpha$ 
    	necessarily belongs to $ \Q $.

    	Let $ \alpha\in \Q $,   we  write  $ \alpha=\frac{m}{n} $ with $ m,n\in \N $ coprime. By B\'ezout's lemma, there exist $ a,b\in \Z $ such that $ am+bn=1 $. Let us select an integer $k \in \N$ such that $a +k n \in \N$. One has 
    	$ R^{a+kn}_{\theta}=R_{\frac{2\pi}{n}-2\pi (b-km)}=R_{\frac{2\pi}{n}} $. Thus $R_{\frac{2\pi}{n}}(S) \subset S$. 
    	Hence,    we can consider  only the angles of the form $\theta =2 \pi \frac{1}{n}$, for $n \in \N$.

    	By applying $(n-1)$ times the rotation $R_{\frac{2\pi}{n}}$ one obtains $R_{-\frac{2\pi}{n}}(S)\subset S$, therefore the rotations with rational  $ \alpha $ are surjective onto $S$ and consequently also invertible on $S$. 
    	In particular, $R_{ 2 \pi \alpha }(S) =S $ for any $\alpha \in \Q$. \medskip

    	Now we define 
    	$$
    	r:=\min\{||w||: w \in \R^2, \quad S +w \subset S   \}   
    	$$ 
        that is well-posed  because, by hypothesis, there exists $ \bar{x} \in \R^2$ such that $S +\bar{x}  \subset S $ and $S $ has a finite number of points in any ball.

    	Therefore there exists  $ \bar{v}_0 \in \R^2 $  such that $S + \bar{v}_0 \subset S $ with $||\bar{v}_0||=r$. Notice that, without loss of generality, one can assume that $ r=1 $ and $ \bar{v}_0=(1,0) $. 
    	
    	\noindent Let us observe  that, by $R_{ 2 \pi \frac{1}{n} }(S) =S $, it follows 
    	$$
    	S +  \bar{v}_k  \subset S , 
    	$$ 
    	where $\bar{v}_k = \bigl(\cos(\frac{2\pi k}{n}),\sin(\frac{2\pi k }{n})\bigr) $ for $k =0, \ldots , n-1$. 
    	From the fact that 
    	\begin{equation*}
    		-\bar{v}_0=\sum_{k=1}^{n-1}\bar{v}_k, 
    	\end{equation*}	
    	one has  that $ S -\bar{v}_0 \subset S$. 
    	Therefore the translation with respect to  $\bar{v}_0$ is 
    	surjective onto $S$ and being also injective it is invertible on $S$. 
    	Therefore $S\pm\bar{v}_k =S $ for $k =0, \ldots , n-1$. 
    	Hence,    one has
    	$$
    	S +\bar{v}_1  -\bar{v}_0=S. 
    	$$ 
    	The norm of $\bar{v}_1-\bar{v}_0$ is   $\sqrt{2-2\cos (\frac{2\pi }{n})}$.  Since $ \bar{v}_0$ is a minimal norm vector such that $S +\bar{v}_0 =S$, one has
    	\begin{equation}\label{mag1}
    		\|\bar{v}_1-\bar{v}_0\|=\sqrt{2-2\cos \left (\frac{2\pi }{n} \right )}                \geq 1.
    	\end{equation}
    	By \eqref{mag1} 
    	one obtains that $ \cos\bigl(\frac{2\pi}{n}\bigr)\leq \frac{1}{2} $, which gives  $ n\in \{2,3,4,5,6\}$. 
    	
    	In order to get the result, we need to show that $n =5 $ contradicts $r =1$. In this regard, we consider  
    	$$ 
    	S+\bar{v}_0+ \bar{v}_2 = S 
    	$$ where 
    	$ \bar{v}_2=(\cos(4 \pi /5),\sin(4 \pi/5)) $. Notice that  $ \|\bar{v}_0+\bar{v}_2\|=\sqrt{2+2\cos\bigl(\frac{4}{5}\pi\bigr)}<1 $, which
    	 contradicts $r =1$. 
    	Therefore $ \theta \in \big\{\frac{\pi}{3},\frac{\pi}{2},\frac{2}{3}\pi ,  \pi \big\} $  and this concludes the proof.	   	
    \end{proof}   
    \begin{remark}
    	We note that a set $ S $ satisfying the hypotheses of Lemma \ref{l:rotations} must be  countable. 
 Notice that for  $\bar x =(0,1)$ and $ \theta \in\left \{\frac{\pi}{3},\frac{\pi}{2},\frac{2}{3}\pi ,  \pi   \right \} $ there exist examples of sets $S$ such that $S + \bar x= S$ and 
$R_{\theta } (S) =S$.
 Moreover, there are examples where  $R_{2\pi/3 } (S) =S$ but  $R_{\pi/3 } (S) \neq S$ and examples such that 
 $R_{\pi } (S) =S$ but  $R_{\pi/2 } (S) \neq S$.   
    \end{remark}
    
    Let us now recall some definitions and notation 
     on graph theory (see e.g. \cite{RD-graph,HJ2006}).

    Let $ G=(V,E) $ be a graph, where  $ V $ is the set of its vertices (or sites)  and $ E\subset V\times V $ is the set of its edges.    
    The \emph{degree} of a vertex $ v\in V $, denoted by $ deg(v) $, is the number of neighbours of $ v $, i.e. $ deg(v):=|\bigl\{u\in V:\{u,v\}\in E\bigr\}| $. The \emph{maximum degree} of $ G $ is $ \Delta(G):=\sup\{deg(v):v\in V\} $.
    Given $ v\in V $ and $ S \subset V $, we denote by $ N_S(v) $ the set of neighbours of $ v $ in $ S $, i.e. $ N_S(v):=\bigl\{u\in S:\{u,v\}\in E\big\} $ and we define the degree of $ v $ in $ S $ as $ deg_S(v):=|N_S(v)| $. 
    Given $ U\subset V $, the induced subgraph $ G[U] $ is the graph whose vertex set is $ U $ and whose edge set consists precisely of the edges $ \{u,v\}\in E $  with  $ u,v\in U $.
    A \emph{path}  connecting a vertex $ v $ to a vertex $ u $ is a non-empty graph $ P=(V(P),E(P)) $, where $ V(P)=\{v_0=v,v_1,\dots,v_{m-1},v_m=u\} $, the vertices $ v_i $ are all distinct and $ E(P)=\{\{v_i,v_{i+1}\}\in E: i=0,\dots,m-1\} $; 
     $ m $ is the length of the path $ P $. If $ P=(V(P),E(P)) $ is a path connecting $ v $ to $ u $, then the graph $ C:=(V(P),E(P)\cup \{\{u,v\}\}) $ 
     is called a \emph{cycle}.  
    A graph $ G=(V,E) $ is said to be \emph{connected} if for any two  vertices $ u,v\in V $ there exists a path connecting them. We say that $ U\subset V $   is  connected if the induced subgraph $ G[U] $ is connected.   
    We denote by $ d_G(u,v) $ the distance in $ G $ of two vertices $ u $ and $ v $ defined as the length of a shortest path connecting $ u $ to $ v $.  
    Given a subset $ U\subset V $, we define the \emph{external boundary} of $ U $ as the set $ \partial_{ext} U:=\{v \in V\setminus U: \exists u\in U \ \text{s.t.} \  \{v,u\}\in E\} $.
    Now, we provide the following definitions (see e.g.   \cite{HJ2006}). 
    \begin{definition}[Graph automorphism]
    	Let $ G=(V,E) $ be a graph. A bijective map $ \phi \colon V\to V $ is said to be a \emph{graph automorphism} if $ \{u,v\}\in E \iff \{\phi(u),\phi(v)\}\in E $.
    \end{definition}
    \begin{definition}[Transitive graph]
    	\label{d: transitive graph}
    	A graph $ G=(V,E) $ is called \emph{transitive} if for any $ u,v \in V $ there is a graph automorphism mapping $ u $ on $ v $.
    \end{definition}
    \begin{definition}[Quasi-transitive graph]
    	\label{d: quasi-transitive graph}
    	A graph $ G=(V,E) $ is said to be \emph{quasi-transitive}
    	if $ V $ can be partitioned into a finite number of vertex
    	sets $ V_1,\dots, V_N $ such that for any $ i=1,\dots,N $ and any $ u,v \in V_i $, there exists a
    	graph automorphism  mapping $ u $ on $ v $. 
    \end{definition}
    Now we introduce planar graphs, which play a central role in our paper. 
    An \emph{arc} is a subset of $ \R^2 $ that is the union of finitely many segments and is homeomorphic to the closed interval $ [0,1] $. The images of $ 0 $ and $ 1 $ under such  a homeomorphism are the \emph{endpoints} of this arc. If $ A $ is an arc with endpoints $ x $ and $ y $, the \emph{interior} of $ A $ is $ A\setminus\{x,y\} $ (see \cite{RD-graph}). 
    
    A \emph{plane graph} is a pair $ G=(V,E) $  that satisfies the following properties:  
    \begin{enumerate}
    	\item $ V\subset \R^2 $ is at most countable;
    	\item every edge is an arc between two vertices;
    	\item different edges have different sets of endpoints;
    	\item the interior of an edge contains no vertex and no point of any other edge.
    \end{enumerate}  
    A graph $ G=(V,E) $ is said to be \emph{planar} if it can be embedded in the plane, i.e. it is isomorphic to a plane graph $ \tilde{G} $. The plane graph $\tilde{G}$ is called a \emph{drawing} of $ G $ or \emph{embedding} of $ G $ in the plane $ \R^2 $. We can identify a planar graph with its embedding in $ \R^2 $. Similarly, we say that $ G=(V,E) $ is embedded in $ \R^d $ if $ V\subset \R^d $ is at most countable and (2)-(4) hold (see e.g. \cite{DSM16, RD-graph}). 

Given a plane graph $ G=(V,E) $ and  a set $ S\subset V $, let $ Conv_G(S) :=Conv(S) \cap V $, where 
$Conv(S)$ denotes the convex hull of $S$. 
   We now  define the shrink and planar shrink property, which will be central to our future discussion.  
    \begin{definition}[Shrink property]
    	Given a graph $ G=(V,E) $, we say that $ G $ has the \emph{shrink property} if for each  subset $ S\subset V $ with finite cardinality, there exists $ u\in S $ such that $ deg_{V\setminus S}(u)\geq deg_S(u) $.
    \end{definition}
    Given 
    a line $\ell \in \mathbb{R}^2 $, we denote by $ H^{\ell}_1\subset \R^2 $ and $ H^{\ell}_2\subset \R^2 $ the closed half-planes having $\ell$ as boundary. Given a non-empty  subset $ S\subset V $ and a line $ \ell $,  we define 
    $  S_1^{\ell}=S \cap H^{\ell}_1 $ and $  S_2^{\ell}=S\cap H^{\ell}_2 $;  clearly  $ S=S_1^{\ell}\cup S_2^{\ell} $. 
    \begin{definition}[Planar shrink property]
    	\label{d:planar shrink property}
    	For a plane graph $ G=(V,E) $, we say that $ G $ has the \emph{planar shrink property} when, for any non-empty set $ S\subset V $ and for any line $ \ell $, one has: 
\begin{itemize}
\item[]   for   $i = 1,2 $, if   $S_i^{\ell}$ is non-empty and has finite cardinality,  then  there exists
    	$ u\in S_i^{\ell} $ such that $ deg_{V\setminus S}(u)\geq deg_{S}(u) $. 
\end{itemize}
We say that a planar graph $ G $ has the \emph{planar shrink property} if there exists an embedding of $ G $ in the plane for which such a property holds.
    \end{definition}
    For a plane graph, it is immediate to note that the planar shrink property implies the shrink property.

    We are interested in a connected planar 
     infinite graph $ G=(V,E) $ with a specified embedding in $ \R^2 $  such that the following conditions hold:
    \begin{description}
    	\item[(C1)] 
    	There exists a non-zero vector $ \bar{x} $ such that $ V+\bar{x} \subset V$ 
    	and for any $ u,v\in V $, 
    	\begin{equation*}
    		\{u,v\}\in E \iff \{u+\bar{x},v+\bar{x}\}\in E. 
    	\end{equation*}
    Then	we say that $G$ is  translation invariant  with respect to the vector $ \bar{x} $. 
    	\item[(C2)] There exists a point $ O\in \R^2 $ 
and $\theta \in (0, \pi ]$ such that $ R_{\theta}(V)\subset V $ and for any $ u,v\in V $,
    	\begin{equation*}
    		\{u,v\}\in E\iff \{ R_{\theta}(u), R_{\theta}(v)\}\in E.
    	\end{equation*}
Then	we say that $G$ is  rotation invariant  with respect to $R_{\theta} $.
    	\item[(C3)] Each ball in $ \R^2 $ contains a finite number of vertices of $ G $. 
    \end{description}
    By Lemma \ref{l:rotations},  it follows that a graph satisfying   conditions (C1), (C2) and (C3) has 	$ \theta \in\left \{\frac{\pi}{3},\frac{\pi}{2},\frac{2}{3}\pi ,  \pi   \right \} $ and  the translations and rotations in (C1), (C2) are   graph automorphisms. For reasons that will become clear in the following, we do not deal with $ \theta=\pi $.
The translations and rotations 
 provide a partition of $V$ in classes, in any case 
 the partition given in Definitions \ref{d: transitive graph} and \ref{d: quasi-transitive graph} can be  finer than the one 
given by only translations and rotations.  
 For $\theta = \pi$, it is straightforward to exhibit an example  where the number of classes  is infinite. For instance, we can consider $ G=(V,E) $ where $ V=\Z^2 $ and  the edge set is
 	\begin{equation*}
 		E=\bigl\{\{(i,j),(i,j+1)\}:i,j\in \Z\bigr\}\cup \bigl\{\{(i,0),(i+1,0)\}:i\in \Z\bigr\}.
 	\end{equation*} 
  It is immediate to note that $ G $ is invariant under translation with respect to the vector $ (1,0) $ and is invariant under rotation of $ \pi $ in the origin but not of $ \pi/2 $. Moreover, the classes of $ G $ are $ C_n=\{(i,\pm n):i\in \Z\} $ for $ n\in \N_0 $ (see Figure \ref{f: ex180} in Section \ref{s: examples}).
  We  present the following result for plane graphs that  are translation and rotation invariant.

  \begin{theorem}
    	\label{t:quasi-transitive}
    	If $ G $ is a plane graph  satisfying the conditions (C1), (C2) and (C3), then  $ \theta \in\left \{\frac{\pi}{3},\frac{\pi}{2},\frac{2}{3}\pi,\pi   \right \} $. Moreover if    $ \theta \in\left \{\frac{\pi}{3},\frac{\pi}{2},\frac{2}{3}\pi   \right \} $ then the plane graph $ G $ is either transitive or quasi-transive. 
    \end{theorem}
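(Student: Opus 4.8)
The first assertion is the direct application of Lemma~\ref{l:rotations} to the vertex set $S=V$: conditions (C3), (C1) and (C2) are exactly the three hypotheses of that lemma, so it yields $V+\bar x=V$, $R_\theta(V)=V$ and $\theta\in\{\pi/3,\pi/2,2\pi/3,\pi\}$. Consequently the translation $T_{\bar x}\colon v\mapsto v+\bar x$ and the rotation $R_\theta$ are bijections of $V$, and by (C1)--(C2) they preserve adjacency, hence are graph automorphisms; so is every element of the group $\Gamma:=\langle T_{\bar x},R_\theta\rangle$ that they generate.

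Now suppose $\theta\in\{\pi/3,\pi/2,2\pi/3\}$. The plan is to locate inside $\Gamma$ a rank-two lattice of translations and then bound the number of orbits. Conjugating $T_{\bar x}$ by $R_\theta$ gives $R_\theta T_{\bar x}R_\theta^{-1}=T_{R_\theta\bar x}$, so $\Gamma$ contains the translations along $\bar x$ and along $R_\theta\bar x$. Since $\theta\notin\{0,\pi\}$, the rotation $R_\theta$ has no real eigenvalue, so $\bar x$ and $R_\theta\bar x$ are linearly independent and $\Lambda:=\Z\bar x+\Z R_\theta\bar x$ is a full-rank lattice of $\R^2$; all translations along vectors of $\Lambda$ lie in $\Gamma$ and hence are graph automorphisms. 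Moreover $V+w=V$ for every $w\in\Lambda$: indeed $V+\bar x=V$ and $V+R_\theta\bar x=R_\theta(V+\bar x)=R_\theta(V)=V$, and one iterates (using also that $T_{\bar x}$, being a bijection of $V$, satisfies $V-\bar x=V$). This is precisely the point at which $\theta=\pi$ must be discarded: for $\theta=\pi$ one has $R_\pi\bar x=-\bar x$, so $\Gamma$ contains only translations along a single line — consistent with the example preceding the statement, which has infinitely many classes.

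It remains to check that $V$ decomposes into finitely many $\Lambda$-orbits. Take a bounded fundamental domain $F$ for $\Lambda$ (for instance the half-open parallelogram spanned by $\bar x$ and $R_\theta\bar x$). Any $v\in V$ can be written $v=w+\lambda$ with $\lambda\in\Lambda$ and $w\in F$; since $w=v-\lambda\in V+(-\lambda)=V$, actually $w\in V\cap F$, which is a finite set by (C3). Hence $V=\bigcup_{w\in V\cap F}(w+\Lambda)$, so the orbits of $V$ under $\Lambda$, and a fortiori under $\Gamma$, number at most $|V\cap F|<\infty$. Writing $V_1,\dots,V_N$ for the distinct $\Gamma$-orbits, any two vertices of a given $V_i$ are related by an element of $\Gamma$, i.e. by a graph automorphism; thus $\{V_1,\dots,V_N\}$ is a partition of the kind required by Definition~\ref{d: quasi-transitive graph}, so $G$ is quasi-transitive, and transitive exactly when $N=1$.

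The substance of the argument sits in Lemma~\ref{l:rotations}; the only delicate point above is the passage from a single translation automorphism to a full-rank lattice of them, which is exactly what fails — and must fail — when $\theta=\pi$.
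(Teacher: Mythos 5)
Your proof is correct and follows essentially the same route as the paper: the first claim is Lemma~\ref{l:rotations} applied to $S=V$, and quasi-transitivity comes from translation invariance along the two linearly independent vectors $\bar x$ and $R_\theta\bar x$ together with (C3), which bounds the number of classes by the number of vertices in a fundamental parallelogram. Your write-up simply spells out details the paper leaves implicit (that $T_{R_\theta\bar x}$ is an automorphism via conjugation, and why $\theta=\pi$ fails to yield a rank-two lattice).
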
 
\begin{proof}
	The first part of the statement has already been discussed above.
 It is sufficient to prove that for $ \theta \in\left \{\frac{\pi}{3},\frac{\pi}{2},\frac{2}{3}\pi   \right \} $ 
the number of classes is finite. 
 Let  $ \theta \in\left \{\frac{\pi}{3},\frac{\pi}{2},\frac{2}{3}\pi   \right \} $, 
 the plane graph  $ G $ is translation invariant with respect to the linear independent vectors $ \bar{x} $ and $\bar{y}:= R_{\theta}\bar{x}  $. The number of classes is at most the number of vertices belonging to the closed  parallelogram spanned by vectors $ \bar{x} $ and $ \bar{y} $. 
By  (C3)  follows that the  number of vertices in this parallelogram is finite. 
\end{proof}
  
      \begin{figure}
    	\centering
    	\includegraphics[scale=0.41]{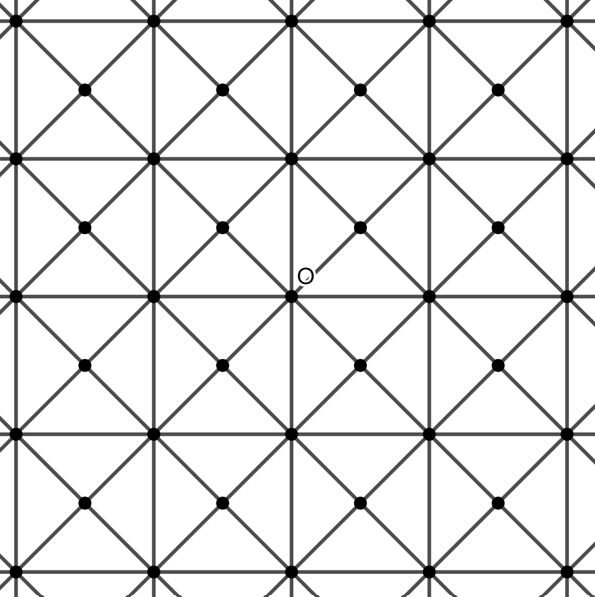} \quad
    	\includegraphics[scale=0.40]{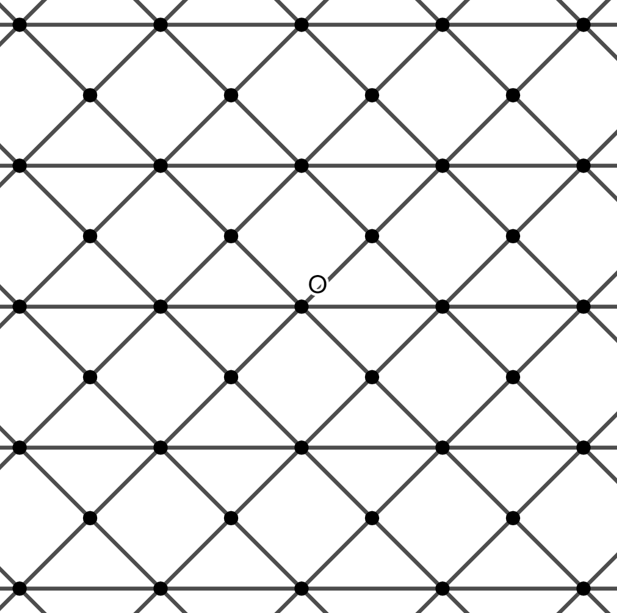} 
    	\caption{The graph on the left belongs to $ \G(4) $, hence it is invariant under rotation of $ \pi/2 $. The graph on the right is only invariant under rotation of $ \pi $.
    	 }
    	\label{f: pi/2 e pi}
    \end{figure}
    
    Now, we introduce the class of plane graphs  $ \G(a) $ with $ a\in \{3,4,6\} $ that is  the collection of connected infinite graphs (with  finite maximal degree) satisfying  conditions (C1)-(C3) with $ \theta=\theta(a)=2\pi/a $. It is immediate to notice that $ \G(6)\subset \G(3) $.
    Let, furthermore,  $ \G:=\G(3)\cup \G(4) $. 
    In Section \ref{s: examples}, we will provide various examples of such graphs.

   \subsection{The $ \mathbf{I(G,p)} $-model }
   \label{s: the model}
   	We consider the stochastic process $ (\sigma_t)_{t\geq 0} $, which describes $ \pm 1 $ spin flips dynamics on an infinite graph  $ G=(V,E) $ with $ \Delta(G)<\infty $. 
   	 The state space is $ \Sigma=\{+1,-1\}^{V} $ and the initial state is distributed according to a Bernoulli product measure with density $ p \in [0,1]$ of spins $ +1 $ and $ 1-p $ of spins $ -1 $. The  process corresponds to the zero-temperature limit of Glauber dynamics for an Ising model with formal Hamiltonian
   \begin{equation}
   	\label{eq:hamiltonian}
   	\mathcal{H}(\sigma)=-\sum_{\substack{u,v \in V:\\ \{u,v\}\in E}}\sigma(u)\sigma(v), 
   \end{equation}
   where $ \sigma \in \Sigma $.
   The definition \eqref{eq:hamiltonian} is not well posed for infinite graphs. For this reason, we introduce the changes in energy at vertex $ v\in V $ as
   \begin{equation*}
   	\Delta\mathcal{H}_v(\sigma)=2\sum_{\substack{u \in V:\\ \{u,v\} \in E}}\sigma(u)\sigma(v).
   \end{equation*}

   The process $ (\sigma_t)_{t\geq 0} $ is a Markov process on $ \Sigma $ with infinitesimal generator having as flip rates 
   \begin{equation}
   	\label{e:our rates}
   	c_{t}(v,\sigma)=\begin{cases}
   		0 & \text{if $ \Delta\mathcal{H}_v(\sigma)>0 $} \\
   		\frac{1}{2} & \text{if $ \Delta\mathcal{H}_v(\sigma)=0$ }\\
   		1 & \text{if $ \Delta\mathcal{H}_v(\sigma)<0 $}.
   	\end{cases}
   \end{equation}

   It is immediate to notice that the stochastic process is well defined, indeed the supremum in \eqref{eq: condition on c} is bounded by $ \Delta(G) $ 
    (see \cite{L1985}).
     We note that the process is a Glauber attractive dynamics.
       Furthermore, since $ \Delta(G)<\infty $, the flip rates in \eqref{e:our rates} satisfy the condition in \eqref{cond-A} with $ A_v=N_V(v) $. Therefore, this process can be  constructed by the Harris’ graphical representation (see \cite{H1974,H1978, L2017, L1985}).
   In the following, we will refer to this model 
   as  $ I(G,p) $-model where $G$ is the underlying graph and $p$ is the  density of the Bernoulli product measure for the initial configuration. Now, in Theorem \ref{p:nec.cond.}, we show that the shrink property is a necessary condition 
 to obtain that the $ I(G,p) $-model is of type $\I$.

   \begin{theorem}
   	\label{p:nec.cond.}
   	Let $ G=(V,E) $ be a graph with $ \Delta(G)<\infty $. 
   	 If $ G $ does not have the shrink property then for any $ p\in[0,1] $ the $ I(G,p) $-model is not of type $ \I $.
   \end{theorem}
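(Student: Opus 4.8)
The plan is to exploit the failure of the shrink property directly: it produces a finite set of vertices that, once monochromatic, can never change its spins, and such a monochromatic ``droplet'' is already present at time $0$ with positive probability for every $p$.

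First I would unfold the hypothesis. Since $G$ does not have the shrink property, there is a non-empty finite $S\subset V$ with $deg_S(u)>deg_{V\setminus S}(u)$ for every $u\in S$; equivalently, every $u\in S$ has strictly more than half of its neighbours inside $S$ (and in particular $deg_S(u)\geq 1$). The heart of the argument is the following stability fact, which I would isolate as a lemma: for a fixed $\varepsilon\in\{+1,-1\}$, if $\sigma_0(u)=\varepsilon$ for all $u\in S$, then almost surely $\sigma_t(u)=\varepsilon$ for all $u\in S$ and all $t\geq 0$. To prove it, put $\tau:=\inf\{t\geq 0:\ \sigma_t(u)\neq\varepsilon \text{ for some } u\in S\}$. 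On $[0,\tau)$ every spin in $S$ equals $\varepsilon$, so by right-continuity $\sigma_{\tau^-}(v)=\varepsilon$ for all $v\in S$; moreover, since $S$ is finite the spins in $S$ change only at the locally finitely many arrivals of the clocks $(\calP_u)_{u\in S}$, so on $\{\tau<\infty\}$ some clock rings at $\tau$ and the corresponding vertex $u\in S$ flips from $\varepsilon$ to $-\varepsilon$ there. But every neighbour of $u$ lying in $S$ still carries spin $\varepsilon$ at time $\tau^-$, hence $\Delta\mathcal{H}_u(\sigma_{\tau^-})=2\varepsilon\sum_{v\in N_V(u)}\sigma_{\tau^-}(v)\geq 2\bigl(deg_S(u)-deg_{V\setminus S}(u)\bigr)>0$, so $c_\tau(u,\sigma_{\tau^-})=0$ by \eqref{e:our rates} and $u$ cannot flip at $\tau$; this contradiction shows $\tau=+\infty$ almost surely.

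With the lemma available the conclusion is immediate. For every $p\in[0,1]$ at least one of the events $\{\sigma_0(u)=+1\ \forall u\in S\}$ and $\{\sigma_0(u)=-1\ \forall u\in S\}$ has positive probability, since these have probabilities $p^{|S|}$ and $(1-p)^{|S|}$ and cannot both vanish. On that event, the lemma with the matching $\varepsilon$ forces every vertex of $S$ to keep its initial value for all times, so $\mP(u \text{ fixates})>0$ for each $u\in S$, and therefore the $I(G,p)$-model is not of type $\I$.

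I expect no genuine difficulty here. The only point demanding a little care is the bookkeeping inside the lemma — that a finite $\tau$ is attained at an actual clock ring and that $\sigma_{\tau^-}$ is constant equal to $\varepsilon$ on $S$ — but this is routine given the right-continuity of $(\sigma_t)_{t\geq 0}$ and the local finiteness of the Poisson arrivals over the finite set $S$ (which also excludes simultaneous arrivals with probability one).
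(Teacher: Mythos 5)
Your proposal is correct and follows essentially the same route as the paper: negating the shrink property yields a finite set $S$ in which every vertex has a strict majority of its neighbours inside $S$, so a monochromatic $S$ at time $0$ (which has probability $p^{|S|}$ or $(1-p)^{|S|}$, at least one of which is positive) is frozen forever, giving sites that fixate with positive probability. The only cosmetic differences are that you justify the stability of the droplet with an explicit stopping-time argument and handle $p=0$ via the $-1$ droplet rather than the paper's separate trivial remark; both are fine.
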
 
   \begin{proof}
   First let us consider the case $ p\in (0,1]$. Since  $ G $ does not have the shrink property then there exists a finite subset $ S\subset V $  such that $ deg_{V\setminus S}(u)< deg_S(u) $  for any $ u\in S $. Moreover, one has 
   \begin{equation}
   	\label{e: lb S}
   	\mP\biggl( \bigcap_{ u \in S } \{ \sigma_0(u)=+1 \} \biggr)=p^{|S|}>0.
   \end{equation}
    Since $ deg_{V\setminus S}(u)< deg_S(u) $  for every $ u\in S $, no site in $ S $ can change the value of its spin if $ \sigma_0(u)=+1 $ for each $ u\in S $. 

 This fact implies that 
   \begin{equation*}
   	\mP\bigl(\text{each vertex in $ S $ fixates at the value $ +1 $ from time zero}\bigr)\geq p^{|S|}>0.
   \end{equation*}
    Thus,  for any $ p\in(0,1] $ the $ I(G,p) $-model is not of type $ \I $.

    Let us now consider  $ p=0$. In this case,  all sites fixate from time $ 0 $ at the value $ -1 $ almost surely and the $ I(G,0) $-model is of type $ \F $. 
   	\end{proof}
    \begin{remark}
    	Note that Theorem \ref{p:nec.cond.} does not imply that the $ I(G,p) $-model is of type $ \F $ or $ \calM $. In fact if 
the graph $G$ is not invariant under translation it could happen   that the model does not belong to any of the three classes $\F, \, \calM$ and $\I$.
    \end{remark}
   
    If $ G $ does not have the shrink property, it is possible to show examples in which the $ I(G,p) $-model is respectively of type $ \F $ or $ \calM $. It is known (see \cite{CNS2002,NNS2000}) that if $ G $ is the hexagonal lattice then the $ I(G,p) $-model is of type $ \F $. Now, we show in Example \ref{e:lem.ex.M} that if we consider $ G\in \G(4) $ as in Figure \ref{f: exampleM} then for any $ p\in (0,1) $ the  $ I(G,p) $-model is of type $ \calM $.
    
    \begin{figure}[htp]
    	\centering
    	\includegraphics[scale=0.50]{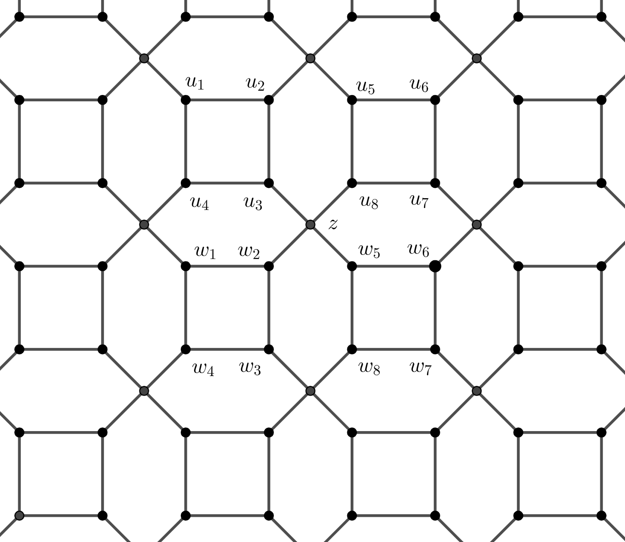}
    	\caption{Example of a graph $ G\in \G(4) $ that does not have the shrink property and, for  $ p\in (0,1) $, the $ I(G,p) $-model is of type $ \calM $.  
    	}
    	\label{f: exampleM}
    \end{figure}  
    
    \begin{example}
    	\label{e:lem.ex.M}
    	Let $ G\in \G $ be the graph in Figure \ref{f: exampleM}. For any $ p\in (0,1) $ the  $ I(G,p) $-model is of type $ \calM $.
    	Let $ u_i\in V $ with $ deg(u_i)=3 $ for $ i=1,\dots,8 $ as in Figure \ref{f: exampleM}. Let $ S=\{u_1,\dots,u_4\} $. Since $ deg_{V\setminus S}(u_1)=1<2= deg_{S}(u_1)  $, it is immediate to notice that
    	\begin{equation}
    		\label{e:u1 fixates}
    		\mP\bigl(\text{$ u_1 $ fixates at the value $ +1 $}\bigr)\geq\mP\bigl( \forall i=1,\dots,4 \ \sigma_0(u_{i})=+1 \bigr)=p^4>0.
    	\end{equation}
    	Hence, by ergodicity (see \cite{M1999, NNS2000}), there exist vertices that fixate at the value $ +1 $ almost surely.
    	Now, let $ z, w_i\in V $ be the vertices such that $ deg(z)=4 $  and $ deg(w_i)=3 $ for $ i=1,\dots,8 $ as in Figure \ref{f: exampleM}. Let $ E_z $ be the event that  the vertices $ u_i $ and $ w_i $ fixate respectively at the value $ +1 $ and $ -1 $ for each $ i=1,\dots,8 $.
    	With the same argument used in \eqref{e:u1 fixates}, we deduce that $ \mP(E_z)\geq p^{8}(1-p)^{8}>0 $. Moreover, conditioning on the event $ E_z $, whenever there is an arrival of the Poisson process $ \calP_z $ the spin flip at $ z $ occurs with probability  $ 1/2$. Thus, by L\'evy's extension of the Borel-Cantelli Lemmas, $ z $ flips infinitely often with positive probability. By ergodicity (see \cite{M1999, NNS2000}), 
 it follows that there exist vertices that flip infinitely often almost surely. Therefore, the  $ I(G,p) $-model is of type $ \calM $.             	
    \end{example}
   \section{Main results}
   \label{s: main results}

     In the following, given $ \sigma\in \Sigma $ and $ v\in V $, we denote by $ C_v(\sigma) $ the cluster at site $ v $ for $ \sigma $, defined as the maximal connected subset of $ V $ such that $ v \in C_v(\sigma) $ and for any $ u\in C_v(\sigma) $ one has $ \sigma(u)=\sigma(v) $.

   	Now, we present the following theorem, which is an extension of  Proposition 3.1 in \cite{CDN:clusters}.
   	\begin{theorem}
   		\label{pr:diverges}
   		For $ d\in \N $, take a $ I(G,p) $-model, where $ p\in[0,1] $ and $ G $ is a graph embedded in $ \R^{d} $  that is translation invariant with respect to $ d $ linearly independent vectors. Moreover, suppose that  $ G $ has the shrink property and $ \Delta(G)<\infty $. 
   		Then, the size of the cluster at a vertex $ v\in V $ diverges almost surely as $ t \to \infty $, i.e.
   		\begin{equation*}
   			\label{e: cluster diverges}
   			\forall v \in V, \quad \lim_{t \to \infty}|C_v(\sigma_t)|=\infty \quad \text{almost surely}.
   		\end{equation*}
   	\end{theorem}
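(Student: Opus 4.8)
The plan is to argue by contradiction, combining translation--ergodicity with the ``instability of finite clusters'' coming from the shrink property. If $p\in\{0,1\}$ the configuration is constant, no spin ever flips and $C_v(\sigma_t)=V$ for every $t$, so assume $p\in(0,1)$. Since $\{|C_v(\sigma_t)|\not\to\infty\}=\bigcup_{M\ge 1}\{|C_v(\sigma_t)|\le M\ \text{for some sequence}\ t_k\uparrow\infty\}$, it suffices to fix $v$ and $M$ and show that the event $E_{v,M}:=\{|C_v(\sigma_t)|\le M\ \text{infinitely often}\}$ has probability $0$. The Bernoulli$(p)$ initial law, together with the graphical representation (the i.i.d.\ Poisson clocks and uniform variables), is invariant and ergodic under the group of the $d$ linearly independent translations, which by hypothesis are graph automorphisms; hence the process is translation covariant, the event $D:=\{\lim_{t\to\infty}|C_u(\sigma_t)|=\infty\ \text{for all}\ u\in V\}$ is translation invariant, and $\mP(D)\in\{0,1\}$. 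It is therefore enough to exclude $\mP(D)=0$. If $\mP(D)=0$, then a.s.\ some vertex $u$ has $\liminf_t|C_u(\sigma_t)|<\infty$; summing over the countably many vertices and over $M$ produces $v$ and $M$ with $\mP(E_{v,M})>0$, and, letting $k^*$ be the (translation-invariant, hence a.s.\ constant) minimal value of $\liminf_t|C_u(\sigma_t)|$ over $u\in V$, after relabelling I may assume $\mP(\liminf_t|C_v(\sigma_t)|=k^*)>0$.

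\noindent\emph{The instability.} On $\{\liminf_t|C_v(\sigma_t)|=k^*\}$ there are a random time $s$ with $|C_v(\sigma_t)|\ge k^*$ for all $t\ge s$ and infinitely many times $t_n\uparrow\infty$ with $|C_v(\sigma_{t_n})|=k^*$. As $\Delta(G)<\infty$, only finitely many connected $k^*$-subsets contain $v$, so, restricting to an event of positive probability, I may assume $C_v(\sigma_{t_n})=S^*$ for all $n$, for a fixed connected $k^*$-set $S^*\ni v$. The shrink property applied to $S^*$ gives $u^*\in S^*$ with $deg_{V\setminus S^*}(u^*)\ge deg_{S^*}(u^*)$; since $S^*=C_v$ is a maximal monochromatic connected set, every neighbour of $u^*$ lying outside $S^*$ disagrees with $u^*$, so $\Delta\mathcal{H}_{u^*}\le 0$ and the flip rate at $u^*$ is at least $\tfrac12$ whenever $C_v=S^*$.

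\noindent\emph{A race and the conditional Borel--Cantelli lemma.} Suppose $u^*$ can be taken $\neq v$ (necessarily $k^*\ge 2$). Whenever $C_v(\sigma_t)=S^*$, consider the first Poisson arrival among the at most $M(1+\Delta(G))$ vertices of $S^*\cup\partial_{ext}S^*$: with probability at least a constant $c=c(M,\Delta(G))>0$ it occurs at $u^*$ and produces a flip there (its clock rings at $u^*$ with probability $\ge 1/(M(1+\Delta(G)))$, after which $u^*$ flips with probability $\ge\tfrac12$, the configuration around $u^*$ being unchanged since $t_n$), and this removes $u^*$ from $C_v$, forcing $|C_v|\le k^*-1$ --- impossible for $t\ge s$. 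Since $\{C_v(\sigma_t)=S^*\}$ recurs infinitely often on our event and these ``escape'' events have conditional probability $\ge c$ each time, L\'evy's extension of the Borel--Cantelli lemmas makes the forbidden transition occur a.s., a contradiction; with the zero--one law this yields the theorem in this case.

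\noindent\emph{The main obstacle.} What remains is the case in which the shrink property can only return $u^*=v$; in particular $k^*=1$, i.e.\ $v$ is a singleton cluster at infinitely many times. This is precisely the point at which the argument must go beyond Proposition~3.1 of \cite{CDN:clusters}, which is proved there for a fixed lattice, and I expect it to be the hardest step. I would split it as follows. \textbf{(a)} For $k^*\ge 2$, prove a combinatorial lemma: a graph satisfying (C1)--(C3) together with the shrink property has no finite connected set of cardinality $\ge 2$ possessing a \emph{unique} vertex with at most half of its neighbours inside the set; granting this, $S^*$ has at least two such vertices, so one may choose $u^*\neq v$ and conclude as above. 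This lemma is where the hypotheses ``$G$ embedded in $\R^d$ and invariant under $d$ linearly independent translations'' really enter, since it may fail for general graphs with the shrink property. \textbf{(b)} For $k^*=1$, use an energy-dissipation argument: each time $v$ is a singleton it flips at rate $1$ with energy change $-2\,deg(v)<0$, so on $E_{v,1}$ the vertex $v$ undergoes infinitely many strictly energy-decreasing flips (again by a conditional Borel--Cantelli argument, since from a singleton configuration $v$ flips before any neighbour does with probability $\ge 1/(1+\Delta(G))$); on the other hand, every flip being energy non-increasing, a short edge-counting argument shows that the total number of energy-decreasing flips ever occurring at vertices of a finite box $\Lambda$ is at most $\Delta(G)\,|\Lambda|$, whereas by translation ergodicity a positive density of vertices would each perform infinitely many such flips --- a contradiction. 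Combined with the zero--one law of the first step, this completes the proof.
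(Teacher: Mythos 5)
Your reduction (0--1 law, recurrence of a fixed finite cluster shape $S^*$, the shrink property producing a vertex $u^*$ with flip rate at least $\tfrac12$, and L\'evy's Borel--Cantelli along return times) is sound as far as it goes, and the ``race'' when $u^*\neq v$ is a correct and rather elementary way to contradict $\liminf_t|C_v(\sigma_t)|=k^*$. But the proof is not complete: case \textbf{(a)} rests on a combinatorial lemma (every finite connected set of cardinality at least $2$ has at least two vertices $u$ with $deg_{V\setminus S}(u)\geq deg_S(u)$) that you do not prove, and whose truth under the theorem's hypotheses (embedding in $\R^d$, $d$ independent translations, shrink property, bounded degree --- note the theorem does not assume rotation invariance or planarity) is far from clear; the shrink property only ever guarantees one such vertex, and applying it to $S\setminus\{u^*\}$ only yields a second vertex adjacent to $u^*$ with a deficit of $2$ in the required inequality. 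In case \textbf{(b)} the claim that ``the total number of energy-decreasing flips ever occurring at vertices of a finite box $\Lambda$ is at most $\Delta(G)\,|\Lambda|$'' is false as a deterministic statement: flips at or outside the boundary of $\Lambda$ are energy non-increasing for the flipped site but can increase the energy restricted to $\Lambda$, so the box energy is not monotone. The correct fact --- that a.s.\ each vertex has only finitely many energy-lowering flips, so no positive density of vertices can have infinitely many --- is Theorem~3 of \cite{NNS2000} and is proved in expectation using translation invariance, not by edge counting.

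The paper's proof avoids your case analysis entirely, and this is worth absorbing. Instead of removing one vertex from the actual cluster and tracking $|C_v|$, it \emph{iterates} the shrink property inside the (finitely many possible) recurrent cluster shapes: there is a prescribed finite sequence of clock rings and tie-breaking outcomes, of probability at least some $\delta=\delta(M,\Delta(G))>0$ in a unit time window, that collapses the whole cluster to a single site $w$ with $d_G(w,v)<M$ and then flips $w$ with $\Delta\mathcal{H}_w=-2\deg(w)<0$. The contradiction is then never with the cluster size (which is essential, since $v$ itself may flip during the collapse and $C_v$ may become the surrounding sea), but always with the NNS2000 energy result: L\'evy's Borel--Cantelli gives infinitely many energy-lowering flips at some fixed $\tilde w$ with positive probability, ergodicity upgrades this to a positive density of such vertices, which is impossible. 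In other words, your subcase \textbf{(b)} contains the germ of the right argument; applied after a full collapse of the cluster rather than only to singletons, it handles all cases at once and removes the need for the unproven lemma in \textbf{(a)}.
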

   	\begin{proof}
 We  explicitly use the elements $\omega $ of  the sample space $\Omega$. 
We  prove the theorem by contradiction. Hence, for a vertex  $v \in V$, let us define the event  
$$
\mathcal{A} :  =\{ \omega \in \Omega :  \liminf_{t \to \infty}|C_v(\sigma_t)|<\infty   \}    .
$$
By contradiction assumption we  suppose $\mathbb{P} (\mathcal{A}) >0$. By  continuity 
of the measure there exists $M >0 $ such that 
$\mathbb{P} (\mathcal{A}_M) >0$, where 
$$
\mathcal{A}_M :  =\{ \omega \in \Omega :  \liminf_{t \to \infty}|C_v(\sigma_t)|<M \}. 
$$
 Then, for any 
$\omega \in \mathcal{A}_M$, one can define $ (T_k (\omega))_{k\in \mathbb{N}} $ such that 
\begin{equation*}
	\label{seqT}
T_1 (\omega)= \inf\{ t \geq 0:   |C_v(\sigma_t)|<M   \},
\end{equation*}
and, for $k \in \N$, one recursively defines
\begin{equation*}
	\label{seqTk}
T_{k+1} (\omega)= \inf\{ t \geq T_k(\omega) +1:   |C_v(\sigma_t)|<M \}.
\end{equation*}
     	Let $ \mathcal{F}_{t} $ be the $ \sigma $-algebra generated by the process up to time $ t $. It is immediate to note that  $ T_k $ is a stopping time with respect to the filtration $ (\mathcal{F}_t)_{t\geq 0} $ for any $ k\in\N $.
     	We consider $ \mathcal{F}_{T_{k}} $ for any $ k\in\N $. 
     	We notice that for $\omega \in \mathcal{A}_M$, since $ \Delta(G)<\infty $ and $ |C_v(\sigma_{T_k})|<M $, the cluster  $  C_v(\sigma_{T_k}) $ can be equal  only to a finite number of sets of vertices. 
     	For each of these sets of vertices, 
     	by the shrink property there is an ordered finite sequence of clock rings and outcomes of tie-breaking coin tosses inside a fixed finite ball that would cause the cluster to shrink to a single site $ w\in V $ with $ d_G(w,v)<M $ ($ w $ could, in principle, depend on $ C_v(\sigma_{T_k}) $). 
     	Then, since the vertex $ w $  would have all neighbours with opposite spins, it could  
     	 have an energy-lowering spin flip (with change in energy equal to $  -2 deg(w) $) and  the cluster would vanish with positive probability. 
     	 	We define
     	\begin{equation*}
     			\calB_{M,k}:=\bigcup_{\substack{w\in V:\\ d_G(w,v)<M} }\bigl\{\text{spin at $ w $ flips at time $ t\in (T_k,T_k+1) $ with $ \Delta \mathcal{H}_{w}(\sigma_t) \leq -1 $}\bigr\}.
     	\end{equation*}  	
     	From the previous statements in this proof, one has that there exists $ \delta >0 $ such that 
     	\begin{equation*}
     		\mP\bigl(\calB_{M,k}|\sigma_{T_k}\bigr)\geq \delta.
     	\end{equation*} 
     	Now, by the Strong Markov property of the process, for any $ k\in\N $ 
     	 we have  the following lower bound    	
     	\begin{equation*}
     		\label{e:xip2}
     		\xi_k (\omega) :=
     		 \mathbb{P}\bigl(\calB_{M,k}|\mathcal{F}_{T_k}\bigr) (\omega)
     		 =\mP\bigl(\calB_{M,k}|\sigma_{T_k}\bigr)\geq \delta,
     	\end{equation*}
for almost every $\omega \in \mathcal{A}_M$. 
        Thus $ \sum_{k=1}^{\infty}\xi_k (\omega )=\infty $, for almost every $\omega \in \mathcal{A}_M$. 
     	 Now, by using the L\'evy's extension of Borel-Cantelli Lemmas (see e.g. \cite{W1991}) with the sequence of events
     	$ (\calB_{M,k})_{k\in \N} $ and the filtration $ (\mathcal{F}_{T_{k}})_{k\in \N} $, we have that
     	\begin{equation*}
    \left  	\{	\omega \in \Omega :\sum_{k=1}^{\infty}\xi_k (\omega)=\infty  \right \} \subset  \left  	\{	\omega \in \Omega :\sum_{k=1}^\infty \mathbf{1}_{\calB_{M,k}}    (\omega)  =\infty \right \}\cup C 
     	\end{equation*}
        where $ C $ has measure zero.
         Then
     	\begin{equation*}
     		\mP\bigl(\limsup_{k \to \infty }\calB_{M,k}\bigr)\geq 
     		\mP\bigl(\mathcal{A}_M\bigr)
     		>0.
     	\end{equation*}
        
         Thus, there exists a vertex 
         $ \tilde{w} $ with $ d_G(\tilde{w},v)<M $ such that energy-lowering spin flips occur at $ \tilde{w} $ infinitely many times with positive probability. 
         By translation invariance with respect to $ d $ linearly independent vectors, $v_1, \ldots , v_d$, we obtain  that the graph $G$ is quasi-transitive. The classes of the graph are all represented inside the  parallelogram spanned by the $d$ vectors $v_1, \ldots , v_d$. Now,  the translation-ergodicity implies that  there exists a positive density of vertices for which energy-lowering spin flips occur infinitely often almost surely. This fact contradicts Theorem 3 and related remark in \cite{NNS2000} (see also  Lemma 5 in \cite{CDS18}).   
     	This concludes the proof.    	
   	\end{proof}
   	\begin{remark}
   		 Theorem \ref{pr:diverges} is a generalization of  Proposition 3.2 in \cite{CDN:clusters} for graphs $ G $  having the shrink property. In \cite{CDN:clusters}, the result was given only for the cubic lattice $ Z ^d $ that in particular has the shrink property. 
   	\end{remark}

       In the following, we consider the $ I(G,p) $-model having $G \in \G (a)$ for $ a\in \{3,4\} $ and it is invariant under translation with respect to $\bar x $. Without loss of generality we take $\bar x = (1,0 )$.
   Let us consider a  vertex $ \tilde{v}$ having minimal  Euclidean distance 
   from the origin $O$.  Clearly,  
   $  \tilde v =O$  when $O $ belongs to $ V $.   In the case $\tilde v \neq O $ we consider the two 
   distinct vertices $\tilde v ,  R_{ \theta }( \tilde v)$; 
   since  $ G $ is a connected graph, we can  select  a connected finite set 
   $S \subset V $  such that  
   $\tilde v ,   R_{ \theta } (\tilde v) \in  S$. Finally we define the set  of vertices 
    \begin{equation}
   		\label{e:U}
   		U =
   		\bigg \{
   		\begin{array}{ll}
   			\{O\} & \text{if } O \in V\\
   			\bigcup_{k=0}^{a-1} R_{ k\theta(a) } (S) &  \text{otherwise}, \\
   		\end{array}
   \end{equation}  
   where $\theta(a)  = 2 \pi / a $. 
   By construction $U $ is connected and $R_{ \theta(a) } (U)  = U $.
   
   \medskip

   For 
   $ a\in \{3,4\} $ we  construct a region of size $ L  \in \R_+$ centered in $ O $ as follows. 
   Let us consider the point $P_1(L , a) = (L \tan  (\theta(a) /2) ,L )\in \R^2 $ and let 
   $$
   P_{i +1}(L , a) = R_{ i\theta(a) } (P_1),
   $$
   for $i =1,  \ldots , a -1$. 
   We define the region of size $ L  \in \R_+$ centered in $ O $ as follows
   $$
   T_{L} (a ) := Conv (\{P_1(L , a) , \ldots , P_{a }(L , a) \}). 
   $$
   For $a=3,4$ one  respectively obtains that $T_{L} (a )$ is an equilateral triangle or a square.

Now, let us consider the class $V_1 \subseteq V $ (see Theorem \ref{t:quasi-transitive} and Definition \ref{d: quasi-transitive graph}). If the graph $G =(V, E) $ is transitive then   $V_1 = V$.  
 	We  write every vertex $ v\in V $ as $ v=(v_x,v_y )\in \R^2 $ and let $ v_{0,y}:=\max_{v\in  V_1 \cap T_{L  }(a) }v_y $ and $ v_{0,x}:=\min \{v_x \in \R:  (v_x , v_y)\in  V_1 \cap T_{L  }(a)  \text { and }    v_y =   v_{0,y}      \}$.
 We define $ v_0= ( v_{0,x},    v_{0,y} )\in V_1 \cap T_{L  }(a)    $.
By translation invariance of $G$ with  respect to $\bar x = (1,0)$, one has  $ v_0 +  \bar{x} \in V_1$. 
 Now we can select a connected set 
of vertices $U_0  $ such that $ V_1 \cap B (v_0, 2 ) \subset U_0$. Finally we choose $r_1 \geq 2 $ such that 
$   U_0 \subset  B (v_0, r_1) $.
    We are ready to present the following lemma.      
      \begin{lemma}
      	\label{l:cycle}
      	For any $ G\in \G(a) $ with $ a\in \{3,4\} $ and  for any $ L\in \R_+ $  there exists  a connected set of vertices
$W_{L}\subset (T_{L +2 r_1}(a) \setminus   T_{L-2 r_1 }(a) )$ such  that 
$ R_{ \theta(a) }(W_L) = W_{L}$.  	
      \end{lemma}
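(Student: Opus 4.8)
The plan is to build $W_L$ as a ``ring'' of vertices that, going around the origin, stays in the annular region $T_{L+2r_1}(a)\setminus T_{L-2r_1}(a)$ and is invariant under $R_{\theta(a)}$. First I would use the translation invariance of $G$ to locate, near the midpoint of each side of the regular region $T_L(a)$, a vertex of the distinguished class $V_1$: recall that $v_0\in V_1\cap T_L(a)$ was chosen with maximal $y$-coordinate, and $v_0+k\bar x\in V_1$ for every $k\in\mathbb Z$; since $\bar x=(1,0)$, the vertices $v_0+k\bar x$ form a horizontal line of $V_1$-vertices near the top side of $T_L(a)$, and consecutive ones are at distance $1$. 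Hence for any interval on that top side of length at least, say, $1$, there is a $V_1$-vertex within distance $1$ of it, and in fact by connectivity of $U_0$ and the choice $r_1$ with $U_0\subset B(v_0,r_1)$ we can connect consecutive such vertices by paths staying within the $r_1$-neighbourhood of the line; these paths all lie inside $T_{L+2r_1}(a)\setminus T_{L-2r_1}(a)$ for the portion of the line that lies along the side of $T_L(a)$, provided $L$ is taken so that the side is long enough, and for small $L$ one simply enlarges the ring trivially (the statement only requires existence of \emph{some} such connected set).

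Next I would assemble the full ring. Let $W^{(0)}$ be the connected set of $V_1$-vertices (together with the connecting paths) obtained along one side of $T_L(a)$, chosen long enough that its two endpoints lie in the two angular sectors adjacent to the two corners of that side, i.e. so that $W^{(0)}$ ``spans'' the side from corner-region to corner-region. Then set
$$
W_L := \bigcup_{k=0}^{a-1} R_{k\theta(a)}\bigl(W^{(0)}\bigr).
$$
By construction $R_{\theta(a)}(W_L)=W_L$. It remains to check two things: that $W_L$ is connected, and that $W_L\subset T_{L+2r_1}(a)\setminus T_{L-2r_1}(a)$. Connectedness follows because consecutive rotated copies $R_{k\theta(a)}(W^{(0)})$ and $R_{(k+1)\theta(a)}(W^{(0)})$ each reach into the common corner sector of $T_L(a)$ between sides $k$ and $k+1$; by choosing $W^{(0)}$ long enough (translate $v_0$ far enough along $\bar x$ in both directions, which is legitimate since $V_1+k\bar x\subset V_1$) the overlap of the two copies near that corner is non-empty, or at worst one inserts a short connecting path — again available by connectivity of $G$ and absorbed into the $2r_1$-enlargement. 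The containment in the annulus is because each point of $W^{(0)}$ lies within distance $r_1$ of the top side of $T_L(a)$, hence within $r_1$ of $\partial T_L(a)$, and a point within Euclidean distance $r_1$ of $\partial T_L(a)$ lies in $T_{L+c\,r_1}(a)\setminus T_{L-c\,r_1}(a)$ for a constant $c$ depending only on $a$ (the inradius/normal-distance comparison for the regular polygon); absorbing $c$ into the constant and using $2r_1$ as a safe bound gives the claim. Rotations preserve distances, so the same containment holds for every $R_{k\theta(a)}(W^{(0)})$.

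The main obstacle I expect is the geometric bookkeeping near the corners: one must verify that the chosen arc $W^{(0)}$ along a side, after rotation, genuinely links up with its neighbour without leaving the prescribed annulus. This is where the freedom to translate $v_0$ arbitrarily far along $\bar x$ (staying in $V_1$) is essential, together with the freedom to pad by $2r_1$ rather than $r_1$; one should check that extending the arc toward the corner does not push it outside $T_{L+2r_1}(a)$, which reduces to: a point within distance $r_1$ of a side of $T_L(a)$, even past the foot of the side, stays within the $2r_1$-annulus as long as it also stays within $r_1$ of the side segment itself, which is arranged by stopping the arc before it runs off the side and then jumping to the rotated copy. A secondary, purely technical point is the degenerate small-$L$ regime, handled by noting the lemma asks only for existence of a connected rotation-invariant set in the annulus, which — since the annulus $T_{L+2r_1}(a)\setminus T_{L-2r_1}(a)$ contains a full neighbourhood of a large circle once $r_1$ is fixed and $L$ is, if necessary, replaced by the trivial observation that for the statement we may always work with the $a$-fold union of a single path — follows from connectivity of $G$ and (C3). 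Once these are in place, $W_L$ as defined above satisfies all three requirements.
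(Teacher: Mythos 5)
Your construction is essentially the paper's own proof: they likewise take $\calS=\bigcup_{k=0}^{k_{max}}(U_0+k\bar{x})$ along the top side of $T_L(a)$ (connected because $v_k,v_{k+1}\in U_0+k\bar{x}$) and set $W_L=\bigcup_{i=0}^{a-1}R_{i\theta(a)}(\calS)$. The one point where they are more careful than your ``overlap or insert a short connecting path'' is the corner junction: they bound $\|v_0-R_{\theta(a)}(v_{k_{max}})\|\leq \|v_0-P_2(L,a)\|+\|P_1(L,a)-v_{k_{max}}\|\leq 2$ and then use the pre-arranged containment $V_1\cap B(v_0,2)\subset U_0$ to conclude that $R_{\theta(a)}(v_{k_{max}})$ lies in $U_0\subset\calS$ itself, so consecutive rotated copies literally share a vertex and no auxiliary connecting path --- whose length and containment in the annulus you would otherwise have to control --- is ever needed.
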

      \begin{proof}
 For $k \in \N$, let 
$$
v_k:= v_0 +k \bar{x} . 
$$
We define $k_{max} = \max \{k \in \N : v_k \in T_{L }(a)  \}$.  
The set of vertices 
$$
\calS = \bigcup_{k=0 }^{k_{max}} (U_0+k\bar{x})  
$$
is connected because for any $k =0, \ldots, k_{max } -1 $ it turns 
 out that $ G[U_0+k\bar{x}] $ is connected and $v_k, v_{k+1}\in U_0+k\bar{x} $.  

We define the set of vertices $ W_{L} =     \bigcup_{i=0 }^{a  -1}        R_{ i\theta(a) } (\calS)$. 
Now, we show that $ W_{L} $ is  connected. Since 
	\begin{equation*}
		 ||v_0-P_2(L,a)||\leq 1, \quad ||P_1(L,a)-v_{k_{max}}||\leq 1
	\end{equation*}
 and by using the triangular inequality, one obtains
	\begin{multline*}
		||v_0-R_{\theta(a)} (v_{k_{max}})||\leq ||v_0-P_2(L,a)||+||P_2(L,a)-R_{\theta(a)}(v_{k_{max}})||=\\
		=||v_0-P_2(L,a)||+||P_1(L,a)-v_{k_{max}}||\leq 2.
	\end{multline*}
	The previous inequality and $ V_1 \cap B (v_0, 2 ) \subset U_0$ imply that $ W_{L} $ is  connected.
Clearly $W_{L}\subset (T_{L +2 r_1}(a) \setminus   T_{L-2 r_1 }(a) )$ and $ R_{ \theta(a)}(W_{L}) = W_{L}$.   	
      	\end{proof}  
      Let  $ W_{L}$ as in Lemma \ref{l:cycle}. One can select a cycle 
   $ U_L \subset W_{L} $; we call $ f_{L,\infty} $ its outer face and $ f_{L,0} $ its inner face. 
   \begin{remark}
   	\label{r:face}
   	We notice that, by translational invariance with respect to the vectors $ \bar{x}=(1,0) $ and $ \bar{y}=(\cos\theta(a),\sin \theta(a)) $, 
   	one has
   	\begin{equation*}
   		\label{e: asymp}
   		|W_L|\asymp L \quad \text{and} \quad  |V\cap f_{L,0}|\asymp L^{2}, 
   	\end{equation*}
    where we write $ a_n\asymp b_n $ to mean that there exist two positive constants $ c_1 $ and $ c_2 $ such that $ c_1\leq \frac{a_n}{b_n}\leq c_2 $ for all $ n\in \N $.
   	  This implies that $ G\in \G(a) $ is amenable  for any $ a\in \{3,4,6\} $. Under the assumptions of amenability of the graph, the translation invariance of the measure $ \mu $  and  finite-energy of the measure $\mu$, it is known that  the infinite cluster is at most one almost surely (see \cite{BS2011, BK1989, HJ2006}). For the zero-temperature stochastic Ising model, it is not known whether the measure induced at time $ t $ has finite-energy property. Therefore, we are not able to prove the uniqueness of the infinite cluster at time $t$. Instead,  
if  the temperature is positive and decreases to zero, one has the property of finite-energy (see \cite{CDS18}). In this last case one obtains the uniqueness of the infinite cluster. 
   \end{remark}

     Now, given an integer $ q \in \N$ and $\delta < \frac{1}{2q} $, we consider $T_{1 +\delta }(a) \setminus   T_{1-\delta }(a)$ and we show that there exists a collection of balls $(B (c_i, 4/q): i = 1, \ldots , aq ) $ such 
that: 
\begin{itemize}
 \item[a.] $      T_{1 +\delta }(a) \setminus   T_{1-\delta }(a) \subset              \bigcup_{i =1}^{aq}B (c_i, 4/q) $; 
 \item[b.] for any $ i = 1, \ldots, aq $,  the center $c_i $ belongs to $\partial T_{1 }(a)$;
 \item[c.] for any $ i = 1, \ldots, q $ and $m =0, \ldots , a-1$ one has $c_{i+m q} = R_{ m\theta(a)}( c_i ) $. In particular,  $R_{ \theta(a)}(      \bigcup_{i =1}^{aq}B (c_i, 4/q)    ) = 
    \bigcup_{i =1}^{aq}B (c_i, 4/q) $. 
\end{itemize}
It is clear that such a construction exists, for example by taking the centers of the ball equally spaced. The chosen balls in this construction  will be  maintained also in the sequel. 
  
       \begin{figure}[htp]
  	\centering
  	\includegraphics[scale=0.40]{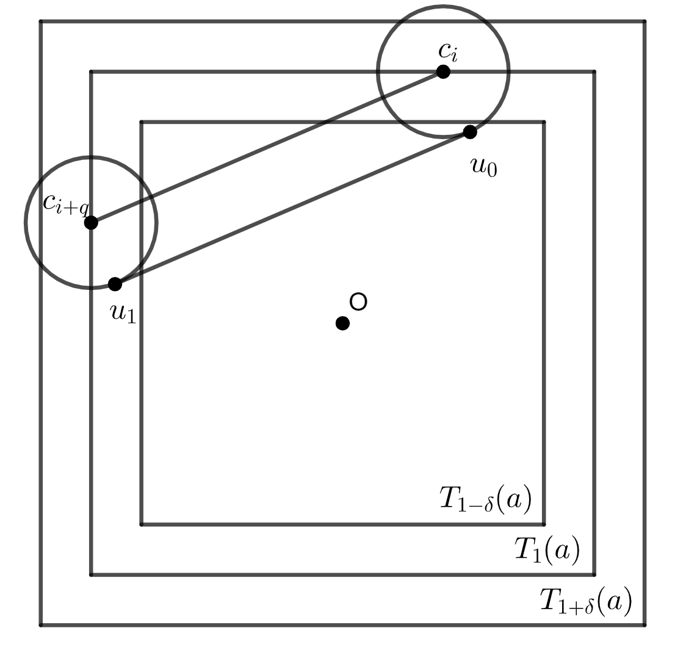}
  	\caption{  The distance between the segment having $u_0 \in B (c_i , 4/q)$ and  $u_1 \in B (c_{i+ q} , 4/q)$ as its endpoints and $ O $ can decrease at most of $4/q$ (the length of the radius)
  		with respect to the distance between the segment having endpoins $c_i$ and $  c_{i+ q}$ and $ O $. }
  	\label{f: glem}
  \end{figure}  

     \begin{lemma}[Geometric Lemma]
     	\label{l:convex hull}
Let $a\in\{3,4\}$, $q \geq 10$ and $\delta < \frac{1}{2q} $ and consider the  cover of $T_{1 +\delta }(a) \setminus   T_{1-\delta }(a)$ introduced in items a, b, and c. 
Then, for any $ (u_{k})_{k =0, \ldots, a-1} $ such that $ u_{k}   \in B (c_{i+kq}, 4/q) $ for $k =0, \ldots, a-1$,  one has 
  $ Conv(\{ u_0 , \ldots , u_{a-1}\})\supset B(O, \frac{1}{2} -\frac{4}{q}  )$. 
     \end{lemma}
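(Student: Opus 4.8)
The plan is to reduce the statement to two elementary facts: the computation that the inradius of $T_1(a)$ equals $1$, and a stability estimate to the effect that moving the vertices of a convex polygon by at most $\varepsilon$ shrinks any disk inscribed in it by at most $\varepsilon$. Note that only items b and c of the chosen cover enter the conclusion; the bounds on $q$ and $\delta$ serve only to guarantee that such a cover exists.

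First I would record the geometry of $T_1(a)$. Since $P_1(1,a)=(\tan(\pi/a),1)$ and $P_{j+1}(1,a)=R_{j\theta(a)}(P_1(1,a))$, the region $T_1(a)$ is the regular $a$-gon with center $O$, circumradius $\|P_1(1,a)\|=\sec(\pi/a)$ and apothem $\sec(\pi/a)\cos(\pi/a)=1$; hence the distance from $O$ to $\partial T_1(a)$ is $1$, so every cover center $c_i\in\partial T_1(a)$ (item b) satisfies $\|c_i\|\ge 1$. Next, fixing $i$ and setting $v_k:=c_{i+kq}=R_{k\theta(a)}(c_i)$ for $k=0,\dots,a-1$ by item c, the points $v_0,\dots,v_{a-1}$ are $a$ equally spaced points on the circle of radius $\|c_i\|$, i.e.\ the vertices of a regular $a$-gon centered at $O$. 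Its inscribed disk, centered at $O$, has radius $\|c_i\|\cos(\pi/a)\ge\cos(\pi/a)$, so $\overline{B(O,\cos(\pi/a))}\subseteq P:=Conv(\{v_0,\dots,v_{a-1}\})$; since $\cos(\pi/3)=\tfrac12$ and $\cos(\pi/4)=\tfrac{\sqrt2}2>\tfrac12$, in both cases $a\in\{3,4\}$ one has $\overline{B(O,\tfrac12)}\subseteq P$.

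Then I would prove the perturbation estimate. With $u_k\in B(v_k,4/q)$ and $P':=Conv(\{u_0,\dots,u_{a-1}\})$, I claim every $x$ with $\overline{B(x,4/q)}\subseteq P$ lies in $P'$. If not, a separating hyperplane for the compact convex set $P'$ yields a unit vector $n$ with $\langle n,u_k\rangle<\langle n,x\rangle$ for all $k$; then by Cauchy--Schwarz $\langle n,v_k\rangle\le\langle n,u_k\rangle+4/q<\langle n,x\rangle+4/q=\langle n,x+\tfrac4q n\rangle$ for every $k$, so $\max_{y\in P}\langle n,y\rangle<\langle n,x+\tfrac4q n\rangle$, contradicting $x+\tfrac4q n\in\overline{B(x,4/q)}\subseteq P$. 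Combining with the previous paragraph, any $x$ with $\|x\|\le\tfrac12-\tfrac4q$ satisfies $\overline{B(x,4/q)}\subseteq\overline{B(O,\tfrac12)}\subseteq P$, hence $x\in P'$; thus $P'\supseteq B(O,\tfrac12-\tfrac4q)$, which is nonempty since $q\ge 10$, and this is the claim.

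The main obstacle is the perturbation step. The figure encodes its edge-by-edge version — the distance from $O$ to any edge of $P'$ can drop by at most the ball radius relative to the corresponding edge of $P$ — but pushing that through directly also forces one to verify that $u_0,\dots,u_{a-1}$ remain in convex position around $O$, a genuine nuisance when $a=4$, where four perturbed points need not be in convex position. The separating-hyperplane formulation above circumvents this and works uniformly in $a$.
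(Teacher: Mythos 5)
Your proof is correct, and its overall decomposition is the same as the paper's: first show that the unperturbed points $c_{i+kq}$, being a rotational orbit of a boundary point of $T_1(a)$ and hence the vertices of a regular $a$-gon of circumradius $\|c_i\|\geq 1$ and apothem $\|c_i\|\cos(\pi/a)\geq\frac12$, have convex hull containing $\overline{B(O,\frac12)}$; then show that perturbing each vertex by at most $4/q$ costs at most $4/q$ in the inscribed radius. Where you diverge is in the second step. The paper argues edge by edge, as in its figure: the distance from $O$ to the segment $[u_k,u_{k+1}]$ drops by at most $4/q$ relative to the segment $[c_{i+kq},c_{i+(k+1)q}]$. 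That argument tacitly assumes the perturbed points remain in convex position in the same cyclic order, so that these $a$ segments are exactly the edges bounding $Conv(\{u_0,\dots,u_{a-1}\})$ — a point you rightly flag as a nuisance for $a=4$ (it does hold here because $4/q$ is small compared with the circumradius, but the paper does not verify it). Your separating-hyperplane argument, showing that $\{x:\overline{B(x,4/q)}\subseteq P\}\subseteq P'$ via support functions, bypasses convex position entirely and is uniform in $a$; the price is a slightly less visual proof, the gain is that it is complete as written and would generalize verbatim to any number of perturbed vertices and any dimension.
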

     \begin{proof} 
       For a fixed $i = 1,\ldots , q $, let us consider    	 $ (c_{i +kq})_{k =0, \ldots , a-1}$.	For $ a=3,4 $, we note that $  Conv(\{ c_{i +kq}:k =0, \ldots , a-1\}) $ is an equilateral triangle or a square. Therefore, since $ c_{i +kq}\in \partial T_{1 }(a) $ one has $ Conv(\{ c_{i +kq}:k =0, \ldots , a-1\})\supset B(O, \frac{1}{2}  )$.
  Let us now consider the segment having $u_0 \in B (c_i , 4/q)$ and  $u_1 \in B (c_{i+ q} , 4/q)$ as its endpoints. The distance between this segment and the origin $O$ can decrease at most of $4/q$ 
with respect to the distance between $ O $ and the segment having endpoins $c_i ,  c_{i+ q}$ (see Figure \ref{f: glem}).  
Then one obtains that 
$ Conv(\{ u_0 , \ldots , u_{a-1}\})\supset B(O, \frac{1}{2} -\frac{4}{q}  )$. 
     \end{proof}
     As already announced, we do not deal with $ \theta=\pi $. Indeed if we consider $a=2$ which corresponds to $ \theta (a)=\pi $, 
     this statement is false because $ Conv(\{u_0,u_1\}) $ would be a segment and there is no ball contained in it. From now on we take $q\geq 24 $ and hence   $\frac{1}{2} -\frac{4}{q}  \geq  \frac{1}{3}$.
      
          \begin{figure}[htp]
      	\centering
      	\includegraphics[scale=0.40]{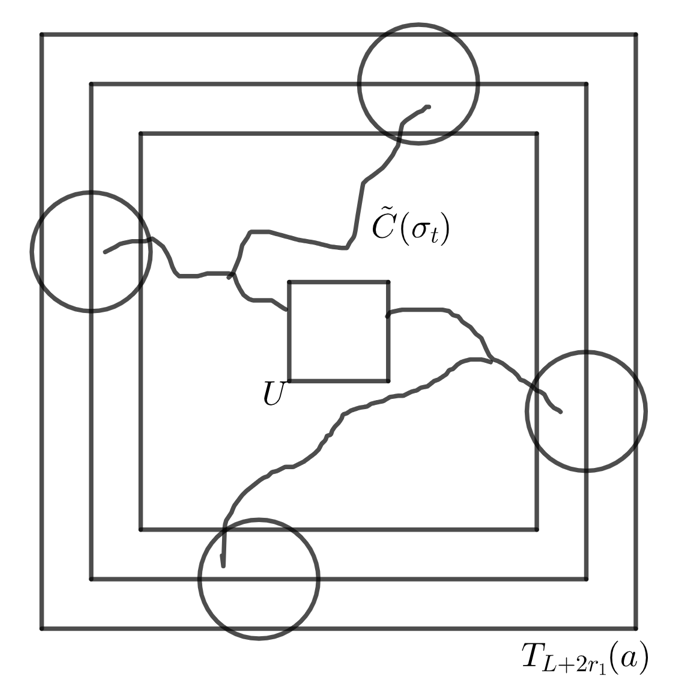}
      	\caption{  Example of a realization of an $ L $-cross.
      	 }
      	\label{f: lcross}
      \end{figure}  
  
      Now, we present the following definition.
     \begin{definition}[$ L $-Cross] 
     	\label{d:cross}
     	Given $ G\in \G(a) $ with $ a\in \{3,4\} $, we say that an 
     	 \emph{$L$-cross of $ +1 $} occurs at time $ t $  if  there exists a cluster $ \tilde{C}(\sigma_t) $ of $ G[V \cap T_{L+ 2 r_1}(a)] $ such that 
     	\begin{itemize}
     		\item $ \sigma_t(v)=+1 $ for each $ v\in \tilde{C}(\sigma_t) $;
     		\item $ \tilde{C}(\sigma_t)\supset U $, where $ U $ is defined in \eqref{e:U};
     		\item there exists $i   \in \{1,\dots,q\} $ such that $ \tilde{C}(\sigma_t)\cap B (L c_{i+ k q}      ,  \,  4 L /q    )\neq \emptyset $ for each $ k=0,\dots,a -1 $.
     	\end{itemize}
         We denote by $ E^{t}_{L} $ with $ t\in \R_{0}^{+} $ the event that an 
          $L$-cross of $ +1 $ occurs at time $ t $. Moreover, we define 
\begin{equation}\label{AL}
A_L:=\limsup_{t\to \infty}E^t_{L} . 
\end{equation}
     \end{definition}

     We define the set of vertices 
     $ S_{L}(t):=\tilde{C}(\sigma_t)\cap W_{L} $, where the  properties of $ W_{L} $ are given in Lemma \ref{l:cycle}. 
      The previous Lemma \ref{l:convex hull} shows that for each time $ t\in \R_{0}^{+} $ in which an 
      $L$-cross of $ +1 $ occurs  (see Figure \ref{f: lcross}), one has
     \begin{equation}\label{rag}
     	Conv(S_{L}(t))\supset  B\left (O, r_L \right ) \quad \text{where $ r_L=\frac{1}{3}L $}.
     \end{equation}
     In other words, Lemma \ref{l:convex hull} says that $ |Conv_{G}(S_{L}(t))|\asymp L^2 $. 
     

     \begin{lemma}
     	\label{l:io}
     	Consider the $ I(G,p) $-model, where $ p\in[\frac{1}{2},1) $ and $ G\in \G$.
     	If $ G $ has the shrink property, then 
$$
\mP(A_L)\geq p_{cross}:= \frac{1}{(aq)^{a}}\left (\frac{1}{2}\right )^{a|U|} , 
$$
where the event $A_L  $ is defined in \eqref{AL}. 
     \end{lemma}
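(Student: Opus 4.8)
The plan is to combine the cluster‑divergence theorem (Theorem \ref{pr:diverges}) with the FKG inequality and the rotational symmetry of the construction, after reducing the $\limsup$‑statement to an estimate at single, arbitrarily large, times. Since $s\mapsto\bigcup_{t\ge s}E^t_L$ is nonincreasing, one has $\mP(A_L)=\lim_{s\to\infty}\mP\bigl(\bigcup_{t\ge s}E^t_L\bigr)\ge\limsup_{t\to\infty}\mP(E^t_L)$, so it will suffice to show $\limsup_{t\to\infty}\mP(E^t_L)\ge p_{cross}$.

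\emph{Step 1: a crossing at large times.} I apply Theorem \ref{pr:diverges}: $G\in\G(a)$ is translation invariant with respect to the two linearly independent vectors $\bar x$ and $R_{\theta(a)}\bar x$, has the shrink property, and has $\Delta(G)<\infty$, so $|C_{\tilde v}(\sigma_t)|\to\infty$ almost surely. As $T_{L+2r_1}(a)$ contains only finitely many vertices (by (C3)), for all large $t$ the cluster $C_{\tilde v}(\sigma_t)$ leaves $T_{L+2r_1}(a)$; being connected, starting from $\tilde v$ in the inner face of the plane cycle $U_L$, and reaching a point outside the convex compact set $T_{L+2r_1}(a)$ (hence in the outer face of $U_L$), it must meet $U_L\subset W_L$, and convexity of $T_{L+2r_1}(a)$ forces the portion of a connecting path up to its first visit of $U_L$ to stay inside $T_{L+2r_1}(a)$, so already the cluster of $\tilde v$ in $G[V\cap T_{L+2r_1}(a)]$ meets $W_L$. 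For $L$ large enough that the rescaled balls $B(Lc_j,4L/q)$, $1\le j\le aq$, cover $T_{L+2r_1}(a)\setminus T_{L-2r_1}(a)\supseteq W_L$, this cluster meets some $B(Lc_j,4L/q)$. Writing $\Gamma_t$ for the event that the cluster of $\tilde v$ in $G[V\cap T_{L+2r_1}(a)]$ meets some $B(Lc_j,4L/q)$, we thus obtain $\mP(\Gamma_t)\to1$ as $t\to\infty$.

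\emph{Step 2: signing, FKG and rotation.} By Lemma \ref{l:order coupling} the event $\{\sigma_t(v)=+1\}$ is increasing in $\sigma_0$, hence monotone in the density $p$, and by the spin‑flip symmetry at $p=1/2$ one gets $\mP(\sigma_t(v)=+1)\ge1/2$ for every vertex $v$ and every $t$. The law of $\sigma_t$ satisfies the FKG inequality (positive association, preserved from the product initial law by the attractive dynamics), so $\mP(\sigma_t|_U\equiv+1)\ge\prod_{v\in U}\mP(\sigma_t(v)=+1)\ge(1/2)^{|U|}$. For $1\le j\le aq$ let $Y^t_j$ be the increasing event that $\sigma_t|_U\equiv+1$ and that the $+1$‑cluster of $\tilde v$ in $G[V\cap T_{L+2r_1}(a)]$ meets $B(Lc_j,4L/q)$; since $U$ is connected and monochromatic on this event, that $+1$‑cluster then contains $U$, so $\bigcap_{k=0}^{a-1}Y^t_{i+kq}\subset E^t_L$ (the $+1$‑cluster witnesses an $L$‑cross with index $i$). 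On $\{\sigma_t|_U\equiv+1\}\cap\Gamma_t$ the cluster of $\tilde v$ coincides with its $+1$‑cluster and meets some ball, so $\bigcup_jY^t_j\supset\{\sigma_t|_U\equiv+1\}\cap\Gamma_t$, whence by the elementary bound $\mP\bigl(\bigcup_jY^t_j\bigr)\ge\mP(\sigma_t|_U\equiv+1)-\mP(\Gamma_t^c)\ge(1/2)^{|U|}-\mP(\Gamma_t^c)$, and letting $t\to\infty$, $\liminf_{t\to\infty}\mP\bigl(\bigcup_jY^t_j\bigr)\ge(1/2)^{|U|}$. Finally $R_{\theta(a)}$ is a graph automorphism with $R_{\theta(a)}(U)=U$ and $R_{\theta(a)}(B(Lc_j,4L/q))=B(Lc_{j+q},4L/q)$, and $\sigma_t$ is rotation invariant in law, so $\mP(Y^t_j)$ depends only on $j\bmod q$; pigeonholing over the $aq$ balls gives $i^\ast$ with $\mP(Y^t_{i^\ast})\ge\tfrac1{aq}\mP\bigl(\bigcup_jY^t_j\bigr)$, and then FKG yields
\[
\mP(E^t_L)\ \ge\ \mP\Bigl(\bigcap_{k=0}^{a-1}Y^t_{i^\ast+kq}\Bigr)\ \ge\ \prod_{k=0}^{a-1}\mP(Y^t_{i^\ast+kq})\ =\ \mP(Y^t_{i^\ast})^{a}\ \ge\ \Bigl(\tfrac1{aq}\,\mP\bigl(\bigcup_{j}Y^t_j\bigr)\Bigr)^{a}.
\]
Taking $\limsup_{t\to\infty}$ gives $\mP(A_L)\ge\bigl((1/2)^{|U|}/(aq)\bigr)^{a}=p_{cross}$.

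\emph{Main obstacle.} The crux is that Theorem \ref{pr:diverges} only produces an \emph{unsigned} large cluster, with no control on its sign nor on whether it already contains the whole finite rotation‑invariant set $U$; securing a $+1$‑cluster that contains $U$ — which is exactly what lets the rotated crossings glue into a single cluster in the pigeonhole/FKG step — is where the factor $(1/2)^{|U|}$ and the hypothesis $p\ge1/2$ enter, through the monotonicity in $p$ and the spin‑flip symmetry at $p=1/2$, combined via the elementary inequality $\mP(A\cap B)\ge\mP(A)-\mP(B^c)$ so as not to lose further constants. The plane‑topology bookkeeping in Step 1 (that the induced‑subgraph cluster, not merely the ambient one, reaches $W_L$ and lands in the rescaled balls) is routine given convexity of $T_L(a)$ and the plane‑graph axioms, and the FKG inputs rest on the standard fact that attractive Glauber dynamics started from a product measure preserves positive association.
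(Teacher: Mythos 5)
Your proof is correct and follows essentially the same route as the paper's: the bound $\mP(\sigma_t|_U\equiv+1)\geq(1/2)^{|U|}$ via attractivity and Harris/FKG, Theorem \ref{pr:diverges} plus planarity to reach the annulus $W_L$ with probability tending to one, a union bound/pigeonhole over the $aq$ balls combined with rotation invariance, FKG to intersect the $a$ rotated crossing events, and a reverse-Fatou--type limit to pass to $A_L$. The only cosmetic difference is that you track the unsigned cluster of $\tilde v$ through the auxiliary event $\Gamma_t$ before intersecting with $\{\sigma_t|_U\equiv+1\}$, whereas the paper works directly with the cluster containing $U$ on the event $\calU_t$.
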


  We now explain the strategy for proving Lemma \ref{l:io}. Let $ U\subset V $ as defined in \eqref{e:U}. If the initial density $ p\geq 1/2 $, then   $ U  $ is  contained in a cluster of $ +1 $
     	with  probability at least $ (1/2)^{|U|} $, at any time $ t $.
     	By  Theorem \ref{pr:diverges}, the size of this cluster  diverges almost surely as $ t \to \infty $. Now, by FKG inequality and by rotation invariance, one obtains that $ \liminf_{t \to \infty}\mP(E^{t}_{L})>0 $, i.e. the cluster satisfies the properties in Definition \ref{d:cross} with positive probability. Note that this lower bound does not depend on $ L $. By Reverse Fatou Lemma, we obtain the same lower bound on $ \mP(A_L) $.  We are now ready to prove the lemma.

     \begin{proof}
     	Let $ \calU_{t} $ be the event that all vertices in $ U $ have spin equal to $ +1 $ at time $ t $.      	
     	By Lemma \ref{l:order coupling}, FKG inequality and Harris' inequality (see \cite{L1985, H1977}), it follows that 
     	\begin{equation}
     		\label{e: vertices in U fixate}
     		\mP(\calU_{t})\geq \biggl(\frac{1}{2}\biggr)^{|U|}.
     	\end{equation}
     	If  $ \calU_{t} $ occurs, then, since $ U $ is connected,
     	 at time $ t $ all vertices in $ U $ belong to a same cluster, 
     	we call it  $ C_{U}(\sigma_t) $. Moreover, let $ \tilde{C}(\sigma_t)$ be the cluster of $  G[V \cap T_{L+ 2 r_1}(a)] $ that contains $ U $. 
     	We denote by $ \calC_{W_L}(t) $ the event that the cluster $ \tilde{C}(\sigma_t) $ intersects $  W_L $, i.e.  $ \calC_{W_L}(t):=\{\tilde{C}(\sigma_t)\cap W_L\neq \emptyset\} $.
     	By Theorem \ref{pr:diverges}, we have that $ \lim_{t \to \infty}|C_{U}(\sigma_t)|=\infty $ almost surely. 
     	Thus, by planarity of $ G $ and $ V\cap f_{L,0} $ has finite cardinality (see Remark \ref{r:face}), 
     	we get
     	\begin{equation}
     		\label{e: cons. prop.1}
     		\lim_{t \to \infty}\mP\bigl(\calC_{W_L}(t)\bigr)=1.
     	\end{equation}
     	By \eqref{e: vertices in U fixate} and \eqref{e: cons. prop.1}, it follows that
     	\begin{equation}
     		\label{e: lower bound liminf with conditional probability}
     		\liminf_{t \to \infty}\mathbb{P}\bigl(\calC_{W_L}(t)\cap \calU_{t} \bigr)=\liminf_{t \to \infty}\mP(\calU_{t})\geq \biggl(\frac{1}{2}\biggr)^{|U|}.
     	\end{equation}
       Now, we write $ W_L=\cup_{i=1}^{q}\cup_{k=0}^{a-1}P_{L,k}^{i}$ where $ P_{L,k}^{i}:= W_L\cap B (L c_{i+ k q},  \,  4 L /q    )$ for  $ i=1,\dots,q$ and $ k=0,\dots,a-1 $. 
        	We define the event $ \calC_{L,i,k}(t):=\{\tilde{C}(\sigma_t)\cap P_{L,k}^{i}\neq \emptyset\} $ for   $ i=1,\dots,q$ and $ k=0,\dots,a-1 $.
        	Hence, we have 
        	\begin{equation*}
        		\calC_{W_L}(t)\cap \calU_{t}=\bigcup_{i=1}^{q}\bigcup_{k=0}^{a-1}\calC_{L,i,k}(t)\cap \calU_{t}. 
        	\end{equation*}
        	Thus, by rotation invariance 
        	and by the union bound, we have
        	\begin{multline}
        		\label{eq:union bound 3}
        		\mathbb{P}\bigl(\calC_{W_L}(t)\cap \calU_{t} \bigr)=\mP\biggl(\bigcup_{i=1}^{q}\bigcup_{k=0}^{a-1}\bigl(\calC_{L,i,k}(t)\cap \calU_{t} \bigr)\biggr)\leq \sum_{i=1}^{q}\sum_{k=0}^{a-1}\mP\bigl(\calC_{L,i,k}(t)\cap \calU_{t} \bigr)\leq \\
        		\leq aq\mP\bigl(\calC_{L,\bar{i},0}(t)\cap \calU_{t} \bigr), 
        	\end{multline}
        	where $ \bar{i}\in \{1,\dots,q\} $ is such that $\mP\bigl(\calC_{L,\bar{i},0}(t)\cap \calU_{t} \bigr)=\max_{i=1,\dots,q}\mP\bigl(\calC_{L,i,0}(t)\cap \calU_{t} \bigr)$. 
        	We note that 
        	$ \calC_{L,\bar{i},k}(t)\cap \calU_{t} $ is an increasing event for  $ k=0,\dots,a-1 $; 
        	therefore 
        	\begin{equation}
        		\label{e: fkg lower bound}     			\mP\biggl(\bigcap_{k=0}^{a-1}\bigl(\calC_{L,\bar{i},k}(t)\cap \calU_{t} \bigr)\biggr)\geq \biggl(\mP\bigl(\calC_{L,\bar{i},0}(t)\cap \calU_{t} \bigr)\biggr)^{a}\geq \biggl(\frac{1}{aq}\mathbb{P}\bigl(\calC_{W_L}(t)\cap \calU_{t} \bigr)\biggr)^{a},
        	\end{equation}
            where the first inequality follows by FKG inequality and  
            by rotation invariance, and the last inequality follows by  \eqref{eq:union bound 3}.
        	We also notice that, by definition of $ P_{L,k}^{i} $, one has
        	\begin{equation*}
        		\calC_{L,i,k}(t)=\{\tilde{C}(\sigma_t)\cap P_{L,k}^{i}\neq \emptyset\}\subset \{\tilde{C}(\sigma_t)\cap B (L c_{i+ k q},  \,  4 L /q    )\neq \emptyset\}.
        	\end{equation*}
        	Thus, by Definition \ref{d:cross}, we have
        	\begin{multline*}
        		E^t_{L}=\bigcup_{i=1}^{q}\bigcap_{k=0}^{a-1}\bigl\{\tilde{C}(\sigma_t)\cap B (L c_{i+ k q},  \,  4 L /q    )\neq \emptyset\bigr\}\cap \calU_{t} \supseteq \\
        		\supseteq\bigcup_{i=1}^{q}\bigcap_{k=0}^{a-1}\calC_{L,i,k}(t)\cap \calU_{t}\supseteq\bigcap_{k=0}^{a-1}\calC_{L,\bar{i},k}(t)\cap \calU_{t},	
        	\end{multline*}
        	and hence
        	\begin{equation}
        		\label{e:plb}
        		\mathbb{P}(E^t_{L})\geq \mP\biggl(\bigcap_{k=0}^{a-1}\calC_{L,\bar{i},k}(t)\cap \calU_{t} \biggr).
        \end{equation}    	 	
     	Now, by \eqref{e: lower bound liminf with conditional probability}, \eqref{e: fkg lower bound} and \eqref{e:plb}, we obtain the following lower bound
     	\begin{equation*}
     		\label{eq:cross 2}
     		\liminf_{t \to \infty}\mathbb{P}(E^t_{L})\geq\liminf_{t \to \infty}\biggl(\frac{1}{aq}\mathbb{P}\bigl(\calC_{W_L}(t)\cap \calU_{t} \bigr)\biggr)^{a}\geq \frac{1}{(aq)^{a}}\left (\frac{1}{2}\right )^{a|U|}.
     	\end{equation*}
     	Finally, by Reverse Fatou Lemma we get 
     	\begin{equation*}
     		\label{e: limsup E lower bound}
     		\mathbb{P}(A_L)\geq \limsup_{t\to \infty}\mathbb{P}(E^t_{L})\geq \liminf_{t \to \infty}\mathbb{P}(E^t_{L})\geq\frac{1}{(aq)^{a}}\left (\frac{1}{2}\right )^{a|U|}>0  
     	\end{equation*}     
     	that concludes the proof.
     \end{proof}

Now  we give a simple definition that will be useful when related to $ E^t_L$ through Lemma \ref{l:convex hull}.
 For  $ t\in \R_{0}^{+} $, let $ F^{t}_{L} $ be   the event that all sites belonging to $ B(O, \frac{L}{3})$  have spin equal to $ +1 $ at  some time $ s \in(t, t+1)$. 
     \begin{lemma}
     	\label{l:convex hull and cross with +1}
     	Consider the $ I(G,p) $-model, where $ p\in[\frac{1}{2},1) $ and $ G\in \G$. If $ G $ has the planar shrink property, then there exists $ \epsilon_{L}>0 $ such that  
     	\begin{equation*}
     		\label{e: lower bound conditional probability}
     		\mathbb{P}(F^{t}_{L}|\sigma_t=\sigma)\geq \epsilon_{L} 
     	\end{equation*}
     	for any $ \sigma\in \Sigma $ such that $ \{\sigma_t=\sigma\}\subset  E^{t}_{L}$. 
     \end{lemma}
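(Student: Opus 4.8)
The plan is to show that, conditionally on $\sigma_t=\sigma$ with $\{\sigma_t=\sigma\}\subseteq E^t_L$, a suitable pattern of clock rings and coin tosses on the interval $(t,t+1)$ — of probability bounded below uniformly in $\sigma$ — forces every vertex of $B(O,L/3)\cap V$ to be $+1$ at the end of the interval, which gives $F^t_L$.

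First I would localize. Put $\Lambda:=V\cap T_{L+2r_1}(a)$, finite by (C3); for $L$ large, $B(O,L/3)\cap V\subseteq\Lambda$, $U\subseteq\Lambda$, and $E^t_L$ depends on $\sigma$ only through $\sigma|_{\Lambda}$, so only finitely many restrictions $\sigma|_{\Lambda\cup\partial_{ext}\Lambda}$ are compatible with $E^t_L$. Let $\mathcal E_\sigma$ be the event that, during $(t,t+1)$: (i) no clock of $\partial_{ext}\Lambda$ rings, so those spins stay frozen; (ii) the clocks of $\Lambda$ that ring do so exactly at a prescribed finite ordered list of vertices $w_1,\dots,w_N\in\Lambda$ and nowhere else in $\Lambda$; (iii) each tie-breaking outcome attached to these rings favours $+1$. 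Since no site outside $\Lambda\cup\partial_{ext}\Lambda$ can influence a clock ring inside $\Lambda$, on $\mathcal E_\sigma$ the spins in $\Lambda$ evolve on $(t,t+1)$ as a deterministic function of $\sigma|_{\Lambda\cup\partial_{ext}\Lambda}$ and of the prescribed rings. Conditionally on $\sigma_t=\sigma$ the post-$t$ clocks and coins are fresh, so $\mP(\mathcal E_\sigma\mid\sigma_t=\sigma)=\mP(\mathcal E_\sigma)>0$; and if the list is chosen so that (a) at the instant its clock rings each $w_i$ is a $-1$ vertex with at least as many $+1$- as $-1$-neighbours, so that it flips to $+1$ with conditional probability $\ge 1/2$, and (b) after the $N$ flips every vertex of $B(O,L/3)\cap V$ is $+1$, then $\mathcal E_\sigma\cap\{\sigma_t=\sigma\}\subseteq F^t_L$ and $\mP(\mathcal E_\sigma)\ge c(\Lambda,N)>0$; taking the minimum over the finitely many relevant $\sigma|_{\Lambda\cup\partial_{ext}\Lambda}$, over which $N$ stays bounded, yields $\epsilon_L>0$. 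Thus it suffices to construct such a list.

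Here the geometry enters. On $E^t_L$ there is a $+1$ cluster $\tilde C=\tilde C(\sigma)$ of $G[\Lambda]$ with $\tilde C\supseteq U$, and by Lemma \ref{l:convex hull} together with \eqref{rag} one has $O\in\Int(Conv(S_L))$ with $S_L=\tilde C\cap W_L\subseteq\tilde C$. Our list only changes $-1$'s into $+1$'s and never touches $\tilde C$, so $\tilde C$ stays $+1$ and the $-1$ set only shrinks. I treat the $-1$ vertices of $B(O,L/3)$ one by one. Fix such a $v_0$ and let $D$ be its current $-1$ cluster, a maximal connected $-1$ set disjoint from $\tilde C$. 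The key claim is that, because $v_0$ lies in the interior of the convex hull of the connected $+1$ set $S_L\subseteq\tilde C$ and $G$ is a plane graph, there is a line $\ell$ with $v_0\in H^{\ell}_1$ and $D\cap H^{\ell}_1$ finite — indeed contained in a fixed enlargement $\Lambda'$ of $\Lambda$ depending only on $L$, being cut off by $\ell$ and a suitable arc of $\tilde C$. Granting this, the planar shrink property applied to $D$, to $\ell$ and to side $1$ yields $u\in D\cap H^{\ell}_1$ with $deg_{V\setminus D}(u)\ge deg_D(u)$; since $D$ is a $-1$ cluster, all $-1$-neighbours of $u$ lie in $D$, so $\Delta\mathcal H_u\le 0$ and $u$ is an admissible next entry of the list. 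Flipping $u$ to $+1$ makes $v_0$'s $-1$ cluster a proper subset of $D$, so $D\cap H^{\ell}_1$ strictly decreases while staying finite and enclosed; after at most $|D\cap H^{\ell}_1|$ such flips $v_0$ is $+1$. As flips never merge $-1$ clusters, the "finite, enclosed half-plane section" persists for the not-yet-treated $-1$ vertices of $B(O,L/3)$, and iterating gives the required list with $\{w_1,\dots,w_N\}\subseteq\Lambda'$.

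The main obstacle is the key claim above: deducing, from "$O\in\Int(Conv(S_L))$ and $\tilde C$ connected", that each $-1$ cluster meeting $B(O,L/3)$ has a finite, $\tilde C$-enclosed intersection with some half-plane through its vertex of $B(O,L/3)$. The delicate point is that the $L$-cross only forces $\tilde C$ to reach the $a$ rotated balls, so $\tilde C$ may be shaped like an $a$-legged spider and need not contain a cycle winding around $O$; consequently a $-1$ cluster can run off to infinity between two legs, which is exactly why one must use the half-plane (planar) version of the shrink property rather than the plain one used in Theorem \ref{pr:diverges}. Making this planar-separation argument precise — choosing, for each relevant $-1$ cluster, a line that seals off its escape sector, and checking that the region it bounds together with $\tilde C$ stays within a fixed bounded part of the plane — is the technical heart of the proof; the remaining bookkeeping is routine.
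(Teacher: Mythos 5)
Your overall strategy --- prescribing on $(t,t+1)$ a finite list of clock rings with favourable tie-breaks whose probability is bounded below uniformly over the finitely many relevant local configurations, and using Lemma \ref{l:convex hull} to reduce $F^{t}_{L}$ to turning $Conv_{G}(S_{L}(t))$ into $+1$ --- is the same as the paper's, and your diagnosis of why the \emph{planar} shrink property rather than the plain one is needed is exactly right. But there is a genuine gap, and it sits precisely at the ``key claim'' you yourself flag: you assert that for a $-1$ vertex $v_0\in B(O,L/3)$ with $-1$ cluster $D$ there is a line $\ell$ with $v_0\in H^{\ell}_1$ and $D\cap H^{\ell}_1$ finite. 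As stated this is false for admissible configurations: $E^{t}_{L}$ only forces $\tilde{C}(\sigma_t)$ to be an ``$a$-legged spider'', so $D$ may escape to infinity through a gap between two legs, and once outside $T_{L+2r_1}(a)$ it is unconstrained and can re-enter \emph{every} closed half-plane from far away; then $D\cap H^{\ell}_1$ is infinite for every line $\ell$ and the planar shrink property cannot be invoked on $D$ in the way you propose. Calling the rest ``routine bookkeeping'' therefore leaves the central step unestablished.

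The paper circumvents this with an ingredient you omit: before building the update sequence it replaces $\sigma_t$ by the \emph{minimal} compatible configuration $\sigma'_t$, equal to $+1$ exactly on $\tilde{C}(\sigma_t)$, and decomposes $(V\cap T_{L+2r_1}(a))\setminus\tilde{C}(\sigma_t)$ into finitely many \emph{finite} $-1$ components $D_1,\dots,D_k$ whose external boundary (in the truncated graph) lies in $\tilde{C}(\sigma_t)$; the planar shrink property is applied to these finite, enclosed pieces to produce the update sequence, and the conclusion is transported back to the original process through the order-preserving coupling of Lemma \ref{l:order coupling}, since $\sigma'_t\leq\sigma_t$ implies the same rings and tie-breaks make $Conv_{G}(S_{L}(t))$ all $+1$ in the larger process as well. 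This monotonicity reduction is what makes the sets to which the shrink property is applied finite and makes the case analysis depend only on $\tilde{C}(\sigma_t)$; without it, or without a correct substitute for your key claim, the proof is incomplete.
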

     \begin{proof} 	
     	Let $ \sigma\in \Sigma $ 
     	and $ (\sigma_s)_{s\geq 0} $ be the  $ I(G,p) $-model such that $ \{\sigma_t=\sigma\}\subset  E^{t}_{L} $. We  define another zero-temperature stochastic Ising model $ (\sigma_s^{\prime})_{s\geq t} $ with infinitesimal generator having the flip rates as in \eqref{e:our rates} and such that
     	\begin{equation*}
     		\sigma_t^{\prime}(v)= \begin{cases}
     			+1  & \text{for each  $ v\in \tilde{C}(\sigma_t)$} \\
     			-1 & \text{otherwise},
     		\end{cases}
     	\end{equation*}   
     	where $ \tilde{C}(\sigma_t) $ is the cluster of $ G[V \cap T_{L+ 2 r_1}(a)] $ in the configuration $ \sigma $, as in Definition \ref{d:cross}.
     	By definition of $ \sigma_t^{\prime} $ and $ E^{t}_{L} $, $ \tilde{C}(\sigma_t)\supset U $ is the unique cluster of $ +1 $ sites in the configuration $ \sigma_t^{\prime}$.  
     	In configuration $  \sigma_t^{\prime} $, we have that $ \bigl(V \cap T_{L+ 2 r_1}(a)\bigr)\setminus \tilde{C}(\sigma_t)=D_1(\sigma_t^{\prime})\sqcup \dots \sqcup D_{k}(\sigma_t^{\prime}) $, where $ D_i(\sigma_t^{\prime})$ for $ i=1,\dots,k  $ are clusters of $ -1 $ sites (we stress that $ k<\infty $ because $ V \cap T_{L+ 2 r_1}(a)  $ has finite cardinality).

     	We notice that, by planarity of $ G $, for each $ i=1,\dots,k $ we have that $ \partial_{ext}D_i(\sigma_t^{\prime})\subset \tilde{C}(\sigma_t) $.
     	By planar shrink property, for each $ i=1,\dots,k $ there exists an ordered finite sequence of updates (i.e. of clock rings and outcomes of tie-breaking coin tosses inside $ V \cap T_{L+ 2 r_1}(a)$) 
     	that would cause all sites of 
$ D_i(\sigma_t^{\prime})\cap Conv_{G}(S_{L}(t))  $ (see definition above formula \eqref{rag}) to 
  have spin equal to $ +1 $ in some time $s \in (t, t+1)$ with positive probability. Therefore, we get an ordered finite sequence of updates  inside $ V \cap T_{L+ 2 r_1}(a)$  that would cause all sites of $  Conv_{G}(S_{L}(t))  $ to have spin equal to $ +1 $ in   $  \sigma_s^{\prime} $ (with $s \in (t, t+1)$), but since $ \sigma_t^{\prime}\leq \sigma_t $  this  sequence of updates works, by the coupling in Lemma \ref{l:order coupling}, in the same way for  the original process  
$( \sigma_s : s \in (t, t+1))$.  
Moreover, by Lemma 	\ref{l:convex hull}, one has $V\cap B(O, \frac{L}{3})     \subset Conv_{G}(S_{L}(t)) $.

     	Thus,  there exists $ \epsilon_{L}>0 $ such that
     	\begin{equation*}
     		\mathbb{P}(F^{t}_{L}|\sigma_t=\sigma)\geq \epsilon_{L}
     	\end{equation*}
     	for any $ \sigma\in \Sigma $  having $ \{\sigma_t=\sigma\}\subset  E^{t}_{L}$.  
     \end{proof}  
     We recall that $ A_L:=\limsup_{t\to \infty}E^t_{L}  $. Now, we define $ B_L:=\limsup_{t\to \infty}F^t_{L} $. We are ready to present the following lemma.

     \begin{lemma}
     	\label{p:lower bound with cross}
     	Consider the $ I(G,p) $-model, where $ p\in[\frac{1}{2},1) $ and $ G\in \G$. If $ G $ has the planar shrink property, then 
     	$ \mP(B_L)\geq p_{cross}>0 $. 
     \end{lemma}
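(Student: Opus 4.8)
The plan is to deduce this from Lemma \ref{l:io} and Lemma \ref{l:convex hull and cross with +1} via a conditional Borel--Cantelli argument, closely mirroring the structure of the proof of Theorem \ref{pr:diverges}. Lemma \ref{l:io} already gives $\mP(A_L)\ge p_{cross}>0$, so the whole task reduces to showing that $A_L\subseteq B_L$ up to a $\mP$-null set; once this is done, $\mP(B_L)\ge \mP(A_L)\ge p_{cross}$.

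To carry this out, I would work on the event $A_L$ (where by definition $E^t_L$ occurs for arbitrarily large $t$) and define a sequence of times $T_1:=\inf\{t\ge 0: E^t_L \text{ occurs}\}$ and recursively $T_{k+1}:=\inf\{t\ge T_k+1: E^t_L \text{ occurs}\}$, with the convention $\inf\emptyset=+\infty$; on $A_L$ all the $T_k$ are finite. The key structural observation is that $E^t_L$ depends only on the restriction of $\sigma_t$ to the \emph{finite} vertex set $V\cap T_{L+2r_1}(a)$, and on that finite set the right-continuous process changes only at finitely many Poisson times in any bounded interval; hence $\{t:E^t_L\text{ occurs}\}$ is a countable union of half-open intervals, each $T_k$ is a stopping time for $(\mathcal F_t)_{t\ge 0}$, and $E^{T_k}_L$ genuinely occurs at the infimum $t=T_k$ on $\{T_k<\infty\}$.

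Next I would invoke the strong Markov property (the rates \eqref{e:our rates} are time-homogeneous) together with Lemma \ref{l:convex hull and cross with +1}: since on $\{T_k<\infty\}$ the configuration $\sigma_{T_k}$ is one for which the cross holds, one gets $\mP(F^{T_k}_L\mid\mathcal F_{T_k})\ge\epsilon_L$ on $\{T_k<\infty\}$. Because $T_{k+1}\ge T_k+1$ and $F^{T_k}_L$ only concerns the interval $(T_k,T_k+1)$, the events $F^{T_k}_L$ are ``spread out'', $F^{T_k}_L$ is $\mathcal F_{T_k+1}$-measurable, and $\mathcal F_{T_k+1}\subseteq\mathcal F_{T_{k+1}}$ -- exactly the setup needed for L\'evy's extension of the Borel--Cantelli lemmas (see \cite{W1991}). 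On $A_L$ all $T_k$ are finite, so $\sum_k\mP(F^{T_k}_L\mid\mathcal F_{T_k})\ge\sum_k\epsilon_L=\infty$ there; L\'evy--Borel--Cantelli then yields that $F^{T_k}_L$ occurs for infinitely many $k$ for $\mP$-almost every $\omega\in A_L$, and $F^{T_k}_L$ i.o.\ forces $F^t_L$ to occur for arbitrarily large $t$, i.e.\ $\omega\in B_L$. This gives $A_L\subseteq B_L$ up to a null set and hence $\mP(B_L)\ge p_{cross}>0$.

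The step I expect to require the most care is the stopping-time claim and, relatedly, that $E^{T_k}_L$ holds exactly at $t=T_k$: this rests on $E^t_L$ being a function of only finitely many coordinates of the right-continuous configuration, so that the random set of ``cross times'' is a union of intervals that are closed at their left endpoints. The conditional Borel--Cantelli bookkeeping -- checking that each $F^{T_k}_L$ sits in the right $\sigma$-algebra so that divergence of the conditional probabilities transfers to infinitely many occurrences -- is routine once the $T_k$ are known to be stopping times, and is handled precisely as in the proof of Theorem \ref{pr:diverges}.
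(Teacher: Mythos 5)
Your proposal is correct and follows essentially the same route as the paper: the same stopping times $T_k$ separated by at least $1$, the strong Markov property combined with Lemma \ref{l:convex hull and cross with +1} to bound $\mP(F^{T_k}_{L}\mid\mathcal F_{T_k})$ below by $\epsilon_L$ on $A_L$, and L\'evy's extension of Borel--Cantelli to conclude $A_L\subseteq B_L$ up to a null set, so that Lemma \ref{l:io} gives $\mP(B_L)\ge p_{cross}$. Your additional remarks on why the $T_k$ are genuine stopping times and why $E^{T_k}_L$ holds at $t=T_k$ (finitely many coordinates, right-continuity) are correct points of care that the paper leaves implicit.
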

     \begin{proof} In the proof we will explicitly use the elements $\omega $ of  the sample space $\Omega$.  
     	First, let  $ \omega \in A_L $, i.e. an $L$-cross of $ +1 $ occurs  infinitely often. Then one can define $ (T_k (\omega))_{k\in \mathbb{N}} $ such that 
\begin{equation*}\label{seqT}
T_1 (\omega)= \inf\{ t \geq 0:    E^{t}_L  \text{ occurs} \},
\end{equation*}
and, for $k \in \N$, we recursively define
\begin{equation*}\label{seqTk}
T_{k+1} (\omega)= \inf\{ t \geq T_k +1:    E^{t}_L  \text{ occurs} \}.
\end{equation*}
     	Let $ \mathcal{F}_t $ be the $ \sigma $-algebra generated by the process up to time $ t $. It is immediate to note that  $ T_k $ is a stopping time with respect to the filtration $ (\mathcal{F}_t)_{t\geq 0} $ for any $ k\in\N $. 
     	We consider $ \mathcal{F}_{T_{k}} $ for any $ k\in\N $.
     	By the Strong Markov property of the process and by Lemma \ref{l:convex hull and cross with +1}, for any $ k\in\N $ we have  the following lower bound
     	\begin{equation*}\label{e21}
     		\xi_k (\omega) :=
     		 \mathbb{P}\bigl(F^{T_k}_{L}|\mathcal{F}_{T_k}\bigr) (\omega) =\mP\bigl(F^{T_k}_{L}|\sigma_{T_k}\bigr)\geq \epsilon_{L}>0,
     	\end{equation*}
for almost every $\omega \in A_L$. 
        Thus $ \sum_{k=1}^{\infty}\xi_k (\omega )=\infty $, for almost every $\omega \in A_L$. 
     	 Now, by using the L\'evy's extension of Borel-Cantelli Lemmas (see e.g. \cite{W1991}) with the sequence of events
     	$ (F^{T_k}_{L})_{k\in \N} $ and the filtration $ (\mathcal{F}_{T_{k}})_{k\in \N} $, we have that
     	\begin{equation*}
    \left  	\{	\omega \in \Omega :\sum_{k=1}^{\infty}\xi_k (\omega)=\infty  \right \} \subset  \left  	\{	\omega \in \Omega :\sum_{k=1}^\infty \mathbf{1}_{F^{T_k}_{L}}    (\omega)  =\infty \right \}\cup C, 
     	\end{equation*}
        where $ C $ has measure zero. 
     	Then, by 
     	 Lemma \ref{l:io}, 
     	 we get
     	\begin{equation*}
     		\label{e: lower bound F}
     			\mP\bigl(B_{L}\bigr)\geq \mP\bigl(A_L\bigr) \geq \frac{1}{(aq)^{a}}\left (\frac{1}{2}\right )^{a|U|}= p_{cross}>0. 
     	\end{equation*}
     	This concludes the proof.   
     \end{proof}
         
         Now, for any time $ t_{2}>t_{1} +1$  we define
     	\begin{equation*}
     	D(L;t_{1},t_{2}):=\bigcup_{s\in[t_{1} ,t_{2} -1 ]}F^{s}_{L}\quad \text{and} \quad  D(L;t_{1},\infty):=\bigcup_{s\geq t_{1}}F^{s}_{L}.
     \end{equation*}
     \begin{lemma}
     	\label{l:event D}
     	For any $ L\in \R_+ $ and $ t_1\geq 0 $, one has 
     	\begin{equation*}
     		\mP\bigl(D(L;t_{1},\infty)\bigr)\geq p_{cross}.
     	\end{equation*}
     	Moreover, for any $ \epsilon>0 $ there exists a time $ s>t_{1}+1 $  such that
     	\begin{equation*}
     		\mP\bigl(D(L;t_{1},s)\bigr)\geq (1- \epsilon) p_{cross}.
     	\end{equation*}
     \end{lemma}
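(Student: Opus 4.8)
The plan is to derive both parts of the statement from Lemma \ref{p:lower bound with cross} by purely measure-theoretic arguments; no further probabilistic estimate is required.

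For the first inequality I would use the inclusion $B_L \subseteq D(L;t_1,\infty)$, valid for every $t_1 \geq 0$. Writing $B_L = \limsup_{t\to\infty}F^t_L = \bigcap_{n\in\N}\bigcup_{t\geq n}F^t_L$, any $\omega \in B_L$ lies in $\bigcup_{t\geq n}F^t_L$ for every $n$; choosing an integer $n \geq t_1$ gives $\omega \in \bigcup_{s\geq t_1}F^s_L = D(L;t_1,\infty)$. Hence, by Lemma \ref{p:lower bound with cross},
\[
\mP\bigl(D(L;t_1,\infty)\bigr) \;\geq\; \mP(B_L) \;\geq\; p_{cross}.
\]

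For the second inequality I would exhibit $D(L;t_1,\infty)$ as an increasing union. Since the index interval $[t_1,s-1]$ grows with $s$, the sets $D(L;t_1,s) = \bigcup_{r\in[t_1,s-1]}F^r_L$ are nondecreasing in $s$ and $\bigcup_{s>t_1+1}D(L;t_1,s) = \bigcup_{r\geq t_1}F^r_L = D(L;t_1,\infty)$. Continuity of the measure from below then gives $\mP\bigl(D(L;t_1,s)\bigr)\uparrow \mP\bigl(D(L;t_1,\infty)\bigr)\geq p_{cross}$ as $s\to\infty$, so for any $\epsilon>0$ one can pick $s>t_1+1$ with $\mP\bigl(D(L;t_1,\infty)\bigr)-\mP\bigl(D(L;t_1,s)\bigr)<\epsilon\,p_{cross}$, whence $\mP\bigl(D(L;t_1,s)\bigr) > (1-\epsilon)\,p_{cross}$.

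I do not expect a genuine obstacle: the argument is elementary once Lemma \ref{p:lower bound with cross} is available. The only point requiring a little care is that the unions defining $B_L$ and $D(L;t_1,\infty)$ are over a continuum of times; to stay within a countable framework one replaces the real index by rational times, using that whenever $F^t_L$ occurs for some real $t\ge t_1$ the witnessing time $s\in(t,t+1)$ satisfies $s>t_1$, so that $F^{t'}_L$ also occurs for any rational $t'\in[t_1,s)\cap(s-1,s)$, and such a $t'$ exists.
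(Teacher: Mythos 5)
Your proposal is correct and follows essentially the same route as the paper: the first bound comes from the inclusion $B_L \subseteq D(L;t_1,\infty)$ together with Lemma \ref{p:lower bound with cross}, and the second from the monotonicity of $s \mapsto D(L;t_1,s)$ and continuity of the measure from below. Your additional remark on reducing the uncountable union to a countable one is a reasonable measurability precaution that the paper leaves implicit, but it does not change the argument.
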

     \begin{proof}
     	 We observe that $ D(L;t_{1},\infty)\supset B_{L} $. In particular, by Lemma \ref{p:lower bound with cross}, 
     	\begin{equation*}
     		\mP\bigl(D(L;t_{1},\infty)\bigr)\geq p_{cross}.
     	\end{equation*}
     	Now, we note that for $ t_{2}\leq t_{2}^{\prime} $ one has $ D(L;t_{1},t_{2})\subset D(L;t_{1},t_{2}^{\prime})$. 
     	 Thus, by continuity of  measure
     	 \begin{equation*}
     	 \lim_{t_2\to \infty}\mP\bigl(D(L;t_{1},t_2)\bigr)=	\mP\biggl(\bigcup_{s\geq t} D(L;t_{1},s)\biggr)=\mP\bigl(D(L;t_{1},\infty)\bigr)\geq p_{cross}.
     	 \end{equation*}
     	 Hence for all $ \epsilon>0 $ there exists a time $ s>t_{1}+1 $  such that
     	\begin{equation*}
     		\label{e: probability D lower bound 2}
     		\mP\bigl(D(L;t_{1},s)\bigr)\geq (1- \epsilon) p_{cross}.
     	\end{equation*}     	
     	\end{proof}
     Let $ \F $ be the  $ \sigma $-algebra generated by the process $ (\sigma_t)_{t\geq 0} $.
       All the events introduced  belong to $ \F $. Given a non-zero vector $ \bar{v} $ such that $G$ is  translation invariant  with respect to $ \bar{v} $, we define the configuration translated with respect to $ \bar{v} $ as
       \begin{equation*}
       	(\sigma+\bar{v})(v):=\sigma(v+\bar{v}) \quad \text{for any $ v\in V $}.
       \end{equation*}
       Let $ X $ be a $ \F $-measurable random variable. Then $ X=f((\sigma_t)_{t\geq 0}) $ 
       where $ f $ is a measurable function. We define 
       \begin{equation}
       	\label{e:operator v}
       	X+\bar{v}:= f((\sigma_t-\bar{v})_{t\geq 0}). 
       \end{equation}
       If $ X $ is an indicator function then $ f $ takes only the values $ 0$ or $1$. Let $ A\in \F $, one can define 
       \begin{equation*}
       	\mathbf{1}_{A}+\bar{v}= f((\sigma_t-\bar{v})_{t\geq 0})=: \mathbf{1}_{A+\bar{v}},  
       \end{equation*}
       that defines $ A+\bar{v} $ for any $ A\in \F $.
       
In the following  result we will  apply the ergodic theorem. We note that these processes are ergodic with respect to the translation  if the initial conditions are given for instance by a Bernoulli product measure, 
 see e.g. \cite{H1974,L1985, M1999} and references therein. 
 Now, we introduce some notation, which we will use in the proof of Theorem \ref{t:main result}. For $ v\in V $ and $ t\in \R_{+} $, let
 	\begin{equation*}
 	A^{+}_{v}(t):=\{\sigma_s(v)=+1, \ \ \forall s\in [0,t]\}, \quad 	A^{-}_{v}(t):=\{\sigma_s(v)=-1, \ \ \forall s\in [0,t]\}.
 \end{equation*}
    	 We denote by $ A^{+}_{v}(\infty) $ (resp. $A^{-}_{v}(\infty) $) the event that the vertex $ v $ fixates at the value $ +1 $ (resp. $ -1 $) from time zero. Clearly, $A_{v}^\pm(t) \subset A_{v}^\pm(t')$, for any $t' \leq t $. 	We recall that $ \{V_1, \dots, V_N \} $ is the partition of the vertex set $ V $ that comes from the quasi-transitivity of $ G\in \G $, see Theorem \ref{t:quasi-transitive}. 		We note that, since $ G $ is quasi-transitive, $ \mP( A^{\pm}_{v}(t)) $ depends only on the class to which the vertex $ v $ belongs and does not depend explicitly on the vertex itself. Thus,  for $ p=1/2 $, for each $ i=1,\dots, N $,  $ v\in V_i $, and $ t\in \R_{+}\cup \{\infty\} $,  we  set
    	 \begin{equation*}
    	 	\rho_{i}(t):=\mP( A^{+}_{v}(t))=\mP( A^{-}_{v}(t)).
    	 \end{equation*}
        The last equality follows by symmetry under the global spin flip for $ p=1/2 $.
     Now, we are ready to prove our main result. 
     \begin{theorem}
     	\label{t:main result}
     	If $ G=(V,E) \in \G$ has the planar shrink property, then the $ I(G,1/2) $-model is of type $ \I $, i.e., 
     	 all sites flip infinitely often almost surely.
     \end{theorem}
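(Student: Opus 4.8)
The plan is to show that, at $ p = 1/2 $, every class $ i $ satisfies $ \rho_i(\infty) = 0 $, that is $ \mP(A^+_v(\infty)) = \mP(A^-_v(\infty)) = 0 $ for every vertex $ v $; the theorem then follows with essentially no further work. Indeed, once $ \mP(A^+_v(\infty)) = 0 $ for all $ v $, the contrapositive of Lemma \ref{l:it fixates from time 0} (applicable since $ 1/2 \in (0,1] $) says that no vertex fixates at $ +1 $ with positive probability, and the global spin-flip symmetry available at $ p = 1/2 $ gives the same for fixation at $ -1 $. Since a vertex that fixates must fixate either at $ +1 $ or at $ -1 $, a union bound over the countably many vertices of $ V $ shows that almost surely every vertex flips infinitely often, i.e.\ the $ I(G,1/2) $-model is of type $ \I $.

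To get $ \rho_i(\infty) = 0 $ I would first build a single event of positive probability on which no vertex fixates at $ -1 $ from time zero. For $ L \in \N $ set $ D(L;0,\infty) = \bigcup_{s \geq 0} F^s_L $. Enlarging $ L $ enlarges the ball $ B(O, L/3) $, hence $ F^s_L $, and therefore $ D(L;0,\infty) $, are non-increasing in $ L $; by continuity of $ \mP $ from above together with Lemma \ref{l:event D}, the event $ D^{*} := \bigcap_{L \in \N} D(L;0,\infty) $ satisfies $ \mP(D^{*}) = \lim_{L \to \infty} \mP\bigl(D(L;0,\infty)\bigr) \geq p_{cross} > 0 $. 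On $ D^{*} $, for every $ L $ there is a time $ s > 0 $ at which all spins in $ B(O, L/3) $ equal $ +1 $; hence, for any $ v \in V $, choosing $ L $ with $ v \in B(O, L/3) $ forces $ \sigma_s(v) = +1 $ for some $ s > 0 $, so $ \omega \notin A^-_v(\infty) $. Therefore $ D^{*} \subseteq E $, where $ E := \bigcap_{v \in V} \bigl( A^-_v(\infty) \bigr)^{c} $.

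Next I would verify that $ E $ is invariant under the lattice translations of $ G $. Using the definition of the translated event through \eqref{e:operator v} one checks $ A^-_v(\infty) + \bar w = A^-_{v + \bar w}(\infty) $, so $ E + \bar w = \bigcap_{v' \in V + \bar w} \bigl( A^-_{v'}(\infty) \bigr)^{c} = E $, because $ V + \bar w = V $ by (C1) (and likewise for a second linearly independent translation of $ G $). Since the zero-temperature dynamics started from a Bernoulli product measure is ergodic with respect to these translations (see the discussion preceding the theorem and \cite{H1974, L1985, M1999}), $ E $ has probability $ 0 $ or $ 1 $; as $ \mP(E) \geq \mP(D^{*}) > 0 $ we conclude $ \mP(E) = 1 $. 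Thus $ \mP\bigl(A^-_v(\infty)\bigr) = 0 $ for every $ v $, whence $ \rho_i(\infty) = \mP\bigl(A^-_v(\infty)\bigr) = \mP\bigl(A^+_v(\infty)\bigr) = 0 $ for all $ i $, and the first paragraph concludes the proof.

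The substantive content is all hidden in Lemma \ref{l:event D} and the results it rests on (Lemmas \ref{l:io}--\ref{p:lower bound with cross}, the planar shrink property, Theorem \ref{pr:diverges}, the FKG inequality and the rotation invariance of the regular region): the crucial and non-obvious point is that the lower bound $ p_{cross} $ for an $ L $-cross of $ +1 $ forcing $ B(O, L/3) $ to be entirely $ +1 $ at arbitrarily large times does not depend on $ L $, which is exactly what keeps the intersection $ D^{*} $ of positive probability. Granting that input, the rest is the soft $ 0 $--$ 1 $ argument above, whose only delicate points are the monotonicity of $ D(L;0,\infty) $ in $ L $ and the verification that $ E $ is genuinely translation invariant.
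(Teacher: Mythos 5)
Your proposal is correct, but it reaches the key fact $\rho_i(\infty)=0$ by a genuinely different and softer route than the paper. The paper argues by contradiction with a quantitative density estimate: assuming $\rho_j(\infty)>0$ for some class $j$, it fixes a time $t_\epsilon$ after which $\rho_j(\cdot)$ is nearly constant, tiles the plane with disjoint translates of $T_{L_0+2r_1}(a)$, and applies the spatial ergodic theorem to show that a density at least $(1-\tilde{\epsilon})p_{cross}-\epsilon_2$ of these translates is wiped to $+1$ during $(t_\epsilon,t_{\tilde\epsilon})$; each wipe-out removes a definite fraction of the vertices still at $-1$ since time zero, forcing $\rho_j(t_\epsilon)-\rho_j(t_{\tilde\epsilon})\geq(\rho_j(\infty)-\epsilon_1)\bigl((1-\tilde{\epsilon})p_{cross}-\epsilon_2\bigr)$ and contradicting the near-constancy. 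You instead exploit that the bound of Lemma \ref{l:event D} is uniform in $L$, together with the monotonicity $D(L';0,\infty)\subset D(L;0,\infty)$ for $L'\geq L$ (which does hold, since $F^s_{L'}\subset F^s_{L}$), to build a single event $D^{*}$ of probability at least $p_{cross}$ on which no vertex fixates at $-1$ from time zero, and then invoke translation invariance of that event and the $0$--$1$ law for the ergodic process law. Both arguments rest on exactly the same inputs — Lemma \ref{l:event D} with the $L$-independent constant $p_{cross}$, translation-ergodicity of the process, spin-flip symmetry at $p=1/2$, and Lemma \ref{l:it fixates from time 0} to pass from fixation from time zero to fixation — but you trade the ergodic theorem for spatial averages against the equivalent $0$--$1$ law for invariant events, thereby avoiding the bookkeeping with the constants $\epsilon,\epsilon_1,\epsilon_2,\tilde{\epsilon}$ and the two-time comparison. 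The delicate points you flag (monotonicity in $L$, invariance of $E$ via $V+\bar{w}=V$ from Lemma \ref{l:rotations}) check out; the measurability of $\bigcup_{s\geq 0}F^s_L$ reduces to a countable union over rational $s$ by right-continuity, exactly as in the paper's own use of $D(L;t_1,\infty)$.
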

     \begin{proof}
     	We will prove the  theorem by contradiction. Suppose that there exists $ j\in \{1,\dots, N\} $ such that $ \rho_{j}(\infty)>0 $ that by Lemma \ref{l:it fixates from time 0} is equivalent to have a site that fixates with positive probability. We choose the following constants: $ \epsilon=\frac{1}{3}p_{cross}\rho_{j}(\infty) $, $ \epsilon_{1}=\frac{\rho_{j}(\infty)}{5} $, $\epsilon_{2}=\frac{1}{4}p_{cross} $ and $ \tilde{\epsilon}=\frac{1}{8} $. 

     	We notice that, by continuity of the  measure, the limit of $ \rho_{i}(t) $ as $ t \to \infty  $ exists and is equal to
     	\begin{equation*}
     		\lim_{t \to \infty}\rho_{i}(t)=\lim_{t \to \infty} \mP( A^{+}_{v}(t))=\mP\biggl(\bigcap_{m=1}^{\infty}A^{+}_{v}(m)\biggr)=\mP( A^{+}_{v}(\infty))=\rho_{i}(\infty),
     	\end{equation*}
     	for each $ v\in V_i$.
     	This implies that  there exists a time $ t_{\epsilon}>0 $ such that 
     	\begin{equation}
     		\label{e: t epsilon}
     		0 \leq \rho_{j}(t_{\epsilon})-\rho_{j}(\infty)<\epsilon.
     	\end{equation}
     Since $ G\in \G $, there exist  two linearly independent vectors $ \bar{x}_1 $ and $ \bar{x}_2 $ such that $ G $ is translation invariant with respect to them.
     	We want to construct on the graph $ G $ disjoint regions of a suitable size $ L_{0} $ centered 
     	in $ n_1\bar{x}_1+n_2\bar{x}_2 $ with $ n_1,n_2\in \Z $. 
     	By ergodicity (see \cite{H1974,M1999, NNS2000}), one has 
     	\begin{equation}
     		\label{e:ergodic result}
     		\lim_{r \to \infty}\frac{1}{n(r,j)}\sum_{v\in B(O,r)\cap V_j }\mathbf{1}_{A^{-}_{v}(t_{\epsilon})}=\rho_{j}(t_{\epsilon})>0 \quad \text{almost surely,}
     	\end{equation}
     	where $ n(r,j):=|B(O,r)\cap V_j| $.
     	Thus, \eqref{e:ergodic result} implies that  
     	there exists $ \tilde{r}\in \R_{+} $ 
     	such that  
     	\begin{equation*}
     		\mP\biggl(\frac{1}{n(\tilde{r},j)}\sum_{v\in B(O,\tilde{r})\cap V_j }\mathbf{1}_{A^{-}_{v}(t_{\epsilon})}\notin [\rho_{j}(t_{\epsilon})-\epsilon_{1},\rho_{j}(t_{\epsilon})+\epsilon_{1}]  \biggr)\leq \epsilon_{2}.
     	\end{equation*}
     	Then, in particular
     	\begin{equation}
     		\label{e: convergence in probability}
     		\mP\biggl(\sum_{v\in B(O,\tilde{r})\cap V_j }\mathbf{1}_{A^{-}_{v}(t_\epsilon)}
<n(\tilde{r},j)\bigl(\rho_{j}(t_\epsilon)-\epsilon_{1}\bigr)  \biggr)\leq \epsilon_{2}.
     	\end{equation}
     	Now, we construct  disjoint regions of size $ L_{0} $ on the graph $ G $ in the following way. 
     	Let  
     	$L_0 = 3 \tilde{r} $,  where $ L_{0} $ and $ \tilde{r} $ play the same role of $ L $ and $ r_L $ in 
     	\eqref{rag}.         
     	We define the  event
     	\begin{equation*}
     		G(L;t,\eta ):=\biggl\{\sum_{v\in B(O,L/3)\cap V_j }\mathbf{1}_{A^{-}_{v}(t)}\geq n(L/3,j)\bigl(\rho_{j}(t)-\eta \bigr)  \biggr\},
     	\end{equation*}
where $L, t, \eta >0$. 
      	By \eqref{e: convergence in probability}, one has 
     	\begin{equation}
     		\label{e: probability bad boxes}
     		\mP\bigl(G(L_0;t_\epsilon,\epsilon_{1})\bigr)\geq 1-\epsilon_{2}.    		 
     	\end{equation}

     	Now, let 
     	\begin{equation*} 
     		Y_{L_{0}}(t):=\sum_{v\in B(O,L_{0}/3)\cap V_j}\mathbf{1}_{A^{-}_{v}(t)}.
     	\end{equation*}
     Let $ n_0\in \N $ such that $ T_{L_{0}+ 2 r_1}(a)\cap (T_{L_{0}+ 2 r_1}(a)+n_0\bar{x}_i)=\emptyset $ for $i =1,2$. We define  $ Y_{L_{0},m_1,m_2}(t):= Y_{L_{0}}(t)+m_1n_0\bar{x}_{1}+m_2n_0\bar{x}_2 $ with $ m_1,m_2\in \Z $ (see \eqref{e:operator v}).
     	By ergodicity, it follows that for any $ t\in \R_{+} $
     	\begin{equation}
     		\label{ergodic result Y}
     		\lim_{M \to \infty}\frac{1}{(2M+1)^{2}}\sum_{\substack{m_1,m_2\in \Z:\\|m_1|,|m_2|\leq M}}\frac{1}{n(L_{0}/3,j)}Y_{L_{0},m_1,m_2}(t)=\rho_{j}(t), \quad \text{a.s.} 
     	\end{equation}

       We define the translated events
       \begin{equation*}
       	D(L;m_1,m_2;t_{1},t_{2}):=D(L;t_{1},t_{2})+m_1n_0\bar{x}_{1}+m_2n_0\bar{x}_2	
       \end{equation*}
      and
      \begin{equation*}
      G(L;m_1,m_2;t,\eta )	:=G(L;t,\eta )+m_1n_0\bar{x}_{1}+m_2n_0\bar{x}_2.
      \end{equation*}

            Now, by Lemma \ref{l:event D} and by translation invariance, 
            there exists a time $ {t}_{\tilde \epsilon}>t_{\epsilon}+1 $  such that
       \begin{equation}
       \label{e: probability D lower bound}
       	 \mP\bigl(D(L_{0};m_1,m_2;t_{\epsilon},t_{\tilde \epsilon})\bigr)\geq (1- \tilde{\epsilon}) p_{cross}.
       \end{equation}
 
     	By ergodicity, \eqref{e: probability bad boxes} and \eqref{e: probability D lower bound}, it follows that
     	\begin{multline}
     		\label{m: ergodic D int G}
     		\lim_{M \to \infty}\frac{1}{(2M+1)^{2}}\sum_{\substack{m_1,m_2\in \Z:\\|m_1|,|m_2|\leq M}}\mathbf{1}_{G(L_0;m_1,m_2;t_\epsilon,\epsilon_{1})\cap D(L_{0};m_1,m_2;t_{\epsilon},t_{ \tilde \epsilon})}=\\ =\mP\bigl(G(L_0;m_1,m_2;t_\epsilon,\epsilon_{1})\cap D(L_{0};m_1,m_2;t_{\epsilon},t_{\tilde \epsilon})\bigr)\geq \\ 
     		\geq \mP\bigl(D(L_{0};m_1,m_2;t_{\epsilon},t_{\tilde \epsilon})\bigr)-\mP\bigl(G(L_0;m_1,m_2;t_\epsilon,\epsilon_{1})^{c}\bigr)\geq (1-\tilde{\epsilon})p_{cross}-\epsilon_{2} \quad \text{a.s.}
     	\end{multline}

     Over the event 
     $   G(L_0;m_1,m_2;t_\epsilon,\epsilon_{1}) $ one has 
     	\begin{equation} \label{dopo}
     		Y_{L_{0},m_1,m_2}(t_{\epsilon}) \geq n(L_{0}/3,j)\bigl(\rho_{j}(t_\epsilon)-\epsilon_{1}\bigr). 
     	\end{equation}

By \eqref{ergodic result Y}, we get 
     	\begin{multline}
     		\label{e: rho difference}
     		\rho_{j}(t_{\epsilon})-\rho_{j}(  t_{ \tilde \epsilon}   )=\lim_{M \to \infty}\frac{1}{(2M+1)^{2}}\sum_{\substack{m_1,m_2\in \Z:\\|m_1|,|m_2|\leq M}}\frac{1}{n(L_{0}/3,j)}\biggl[Y_{L_{0},m_1,m_2}(t_{\epsilon})-Y_{L_{0},m_1,m_2}(t_{ \tilde \epsilon} )\biggr]\geq\\
     		\geq \lim_{M \to \infty}\frac{1}{(2M+1)^{2}}\sum_{\substack{m_1,m_2\in \Z:\\|m_1|,|m_2|\leq M}}\frac{1}{n(L_{0}/3,j)}Y_{L_{0},m_1,m_2}(t_{\epsilon})\mathbf{1}_{G(L_0;m_1,m_2;t_\epsilon,\epsilon_{1})\cap D(L_{0};m_1,m_2;t_{\epsilon},    t_{ \tilde \epsilon}     )}.
     	\end{multline}
     The inequality in \eqref{e: rho difference} follows by
     \begin{equation*}
     	\begin{split}
     		  	Y_{L_{0},m_1,m_2}(t_{\epsilon})-Y_{L_{0},m_1,m_2}(t_{ \tilde \epsilon} )&\geq Y_{L_{0},m_1,m_2}(t_{\epsilon})\mathbf{1}_{ D(L_{0};m_1,m_2;t_{\epsilon},    t_{ \tilde \epsilon}     )}\geq \\
     		  	& \geq Y_{L_{0},m_1,m_2}(t_{\epsilon})\mathbf{1}_{G(L_0;m_1,m_2;t_\epsilon,\epsilon_{1})\cap D(L_{0};m_1,m_2;t_{\epsilon},    t_{ \tilde \epsilon}     )}.
     	\end{split}   
     \end{equation*}
  Indeed if $D(L_{0};m_1,m_2;t_{\epsilon},t_{\tilde \epsilon}) $ occurs then $ Y_{L_{0},m_1,m_2}(t_{\tilde \epsilon})=0 $ and one has an equality. Otherwise, if $D(L_{0};m_1,m_2;t_{\epsilon},t_{\tilde \epsilon}) $ does not occur then 	$ Y_{L_{0},m_1,m_2}(t_{\epsilon})-Y_{L_{0},m_1,m_2}(t_{ \tilde \epsilon} )\geq 0 $ since $Y_{L_{0},m_1,m_2}(t)$ is a decreasing function in $ t $. Now, by \eqref{m: ergodic D int G} and 
  \eqref{dopo}, the last term in \eqref{e: rho difference} is lower bounded by 
       \begin{multline}
       	\label{e: rho nuovo}
       	 \bigl(\rho_{j}(t_\epsilon)-\epsilon_{1}\bigr)\lim_{M \to \infty}\frac{1}{(2M+1)^{2}}\sum_{\substack{m_1,m_2\in \Z:\\|m_1|,|m_2|\leq M}}\mathbf{1}_{G(L_0;m_1,m_2;t_\epsilon,\epsilon_{1})\cap D(L_{0};m_1,m_2;t_{\epsilon},t_{ \tilde \epsilon})}\geq\\
       	\geq \bigl(\rho_{j}(t_\epsilon)-\epsilon_{1}\bigr)\bigl((1-\tilde{\epsilon})p_{cross}-\epsilon_{2}\bigr) \quad \text{a.s.}
       \end{multline}
        Combining \eqref{e: t epsilon} with \eqref{e: rho difference} and \eqref{e: rho nuovo} and  recalling the value of the constants $ \epsilon $, $ \epsilon_{1} $, $ \epsilon_{2} $ and $ \tilde{\epsilon} $, we obtain
     \begin{multline*}
     		\frac{1}{3}p_{cross}\rho_{j}(\infty) =\epsilon>\rho_{j}(t_{\epsilon})-\rho_{j}(\infty)\geq\rho_{j}(t_{\epsilon})-\rho_{j}(  t_{ \tilde \epsilon}   )\geq\\
     		\geq 	\bigl(\rho_{j}(t_\epsilon)-\epsilon_{1}\bigr)\bigl((1-\tilde{\epsilon})p_{cross}-\epsilon_{2}\bigr)\geq 	\bigl(\rho_{j}(\infty)-\epsilon_{1}\bigr)\bigl((1-\tilde{\epsilon})p_{cross}-\epsilon_{2}\bigr)=\\
     		=\frac{1}{2}     p_{cross}\rho_{{j}}(\infty),
     	\end{multline*}
     	which is obviously false when $\rho_{j}(\infty) >0$. 	
     Thus, for each $ i\in \{1,\dots, N\} $ we have that $ \rho_{i}(\infty)=0 $, i.e., 
 each site fixates at the value $ +1 $ (or $ -1 $) from time $ 0 $ with zero probability.
     	This implies, by Lemma \ref{l:it fixates from time 0}, that each site fixates at the value $ +1 $ (or $ -1 $) with zero probability.
     	Hence, all sites flip infinitely often almost surely, i.e., 
     	 the model is of type $ \I $.  
     \end{proof}
     
     \section{Construction of a class of graphs having the planar shrink property and conclusions}
     \label{s: examples}
      We begin by introducing a class of graphs that have  the planar shrink property, as we show in Theorem \ref{p: H planar shrink property}.    
     Let $ \mathcal{H} $ be the collection of infinite plane graphs $ G=(V,E) $ satisfying the following properties:
     \begin{description}
     	\item[(P1)] every edge is a closed  line segment, i.e. a line segment which includes its two endpoints;
 	\item[(P2)] for each $ e\in E $, let us consider the unique straight line $\ell $ which  contains $ e $. Then for any $x \in \ell $ there 
exists $ f\in E $ such that $ x\in f $. 
     	\item[(P3)] $ \Delta(G)<\infty $. 
     \end{description}
     
     
     \begin{figure}[htp]
     	\centering
     	\includegraphics[scale=0.28]{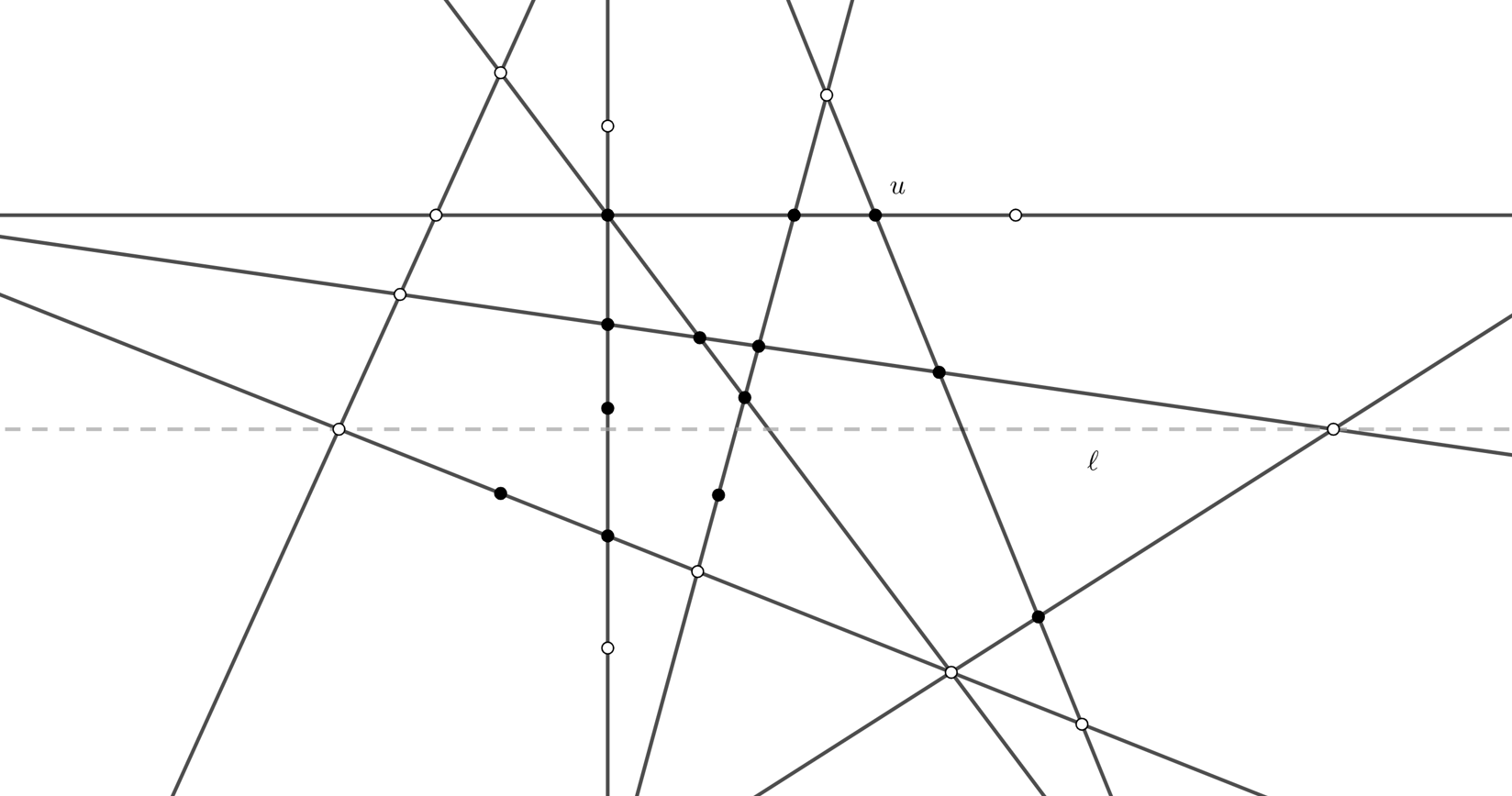}
     	\caption{In black the vertices in $ S $ and in white those in $ V\setminus S $. This figure illustrates a graph  $G \in \mathcal{H}$ and the vertex $u$ used in the proof of  Theorem \ref{p: H planar shrink property}.    		
     	}
     	\label{f:Hshrink}     	     
     \end{figure}

     \begin{theorem}
     	\label{p: H planar shrink property}
     	If $ G\in \mathcal{H} $, then $ G $ has the planar shrink property.
     \end{theorem}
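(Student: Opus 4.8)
The plan is to verify the planar shrink property directly. It is enough to treat a fixed line $\ell$, and after applying a Euclidean isometry of the plane (which preserves being a plane graph and the properties (P1)--(P3)) we may assume $\ell=\{(v_x,v_y)\in\R^2:v_y=0\}$ and $H_1^{\ell}=\{v_y\geq 0\}$, so that $S_1^{\ell}=S\cap H_1^{\ell}\subset\{v_y\geq 0\}$ is non-empty and finite while $S\setminus S_1^{\ell}\subset\{v_y\leq 0\}$. I then take $u\in S_1^{\ell}$ to be lexicographically largest for the pair $(v_y,v_x)$: first $u_y=\max_{v\in S_1^{\ell}}v_y$, and then $u_x$ is the largest abscissa among the vertices of $S_1^{\ell}$ of ordinate $u_y$. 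Such a $u$ exists and is unique since $S_1^{\ell}$ is finite. The goal is to show $deg_{V\setminus S}(u)\geq deg_S(u)$; the case $i=2$ is obtained by exchanging the two half-planes.

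The structural fact I would extract from (P1)--(P2) is: for every $\{u,v\}\in E$, writing $L$ for the line through it, $L$ is tiled by edges of $G$, and hence $u$ is the endpoint of a further edge $\{u,w\}\in E$ with $w$ on the ray of $L$ opposite to $v$, so that $u$ lies strictly between $v$ and $w$; moreover, by the plane-graph axioms, $u$ has at most one incident edge on each ray issuing from $u$. Consequently the edges at $u$ split into antipodal pairs, two per supporting line. I would derive this from (P2) together with the facts that edge interiors contain no vertices, that distinct edges share at most one vertex, and that $\Delta(G)<\infty$; I expect this topological step---in particular excluding pathological accumulations of arbitrarily short edges along $L$---to be the main obstacle, after which the combinatorics is short.

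Granting this, call an edge $\{u,z\}$ at $u$ \emph{bad} if $z\in S$ and \emph{good} otherwise, and let $\{u,w\}$ be the antipode of a bad edge $\{u,v\}$, lying with it on a line $L\ni u$. If $L$ is not horizontal, one of $v_y,w_y$ exceeds $u_y$ and the other is below it; it cannot be $v$ that is above, for $v_y>u_y\geq 0$ would put $v$ in $S_1^{\ell}$ (as $S\setminus S_1^{\ell}\subset\{v_y\leq 0\}$) with ordinate exceeding $\max_{S_1^{\ell}}(\cdot_y)=u_y$, which is absurd; hence $w_y>u_y\geq 0$, so $w\notin S\setminus S_1^{\ell}$, and $w\notin S_1^{\ell}$ since $w_y>u_y$, whence $w\notin S$. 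If instead $L$ is horizontal, then $v_y=w_y=u_y$ and one of $v_x,w_x$ exceeds $u_x$; it cannot be $v$, for then $v\notin S_1^{\ell}$ by maximality of $u_x$, and either $u_y>0$, giving $v\notin S$ at once, or $u_y=0$, in which case $S_1^{\ell}\subset\ell$ forces $S_1^{\ell}=S\cap\ell$ (no vertex of $S$ has positive ordinate) so again $v\in\ell\setminus S_1^{\ell}$ gives $v\notin S$---either way $\{u,v\}$ would be good, a contradiction; hence $w_x>u_x$, and the identical argument applied to $w$ gives $w\notin S$.

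Thus every bad edge at $u$ has a good antipode. Since $u$ carries at most one edge on each ray, the antipode of a bad edge is uniquely determined, so a given line carries no two bad edges and the assignment (bad edge) $\mapsto$ (its antipode) is an injection from the bad edges at $u$ into the good edges at $u$. Therefore $deg_S(u)\leq deg_{V\setminus S}(u)$, which is precisely what the planar shrink property demands of the vertex $u\in S_1^{\ell}$; repeating the argument with $H_2^{\ell}$ in place of $H_1^{\ell}$ finishes the proof for all $G\in\mathcal{H}$.
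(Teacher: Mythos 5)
Your proof is correct and takes essentially the same route as the paper's: the same lexicographically maximal vertex $u$ in $S_1^{\ell}$, the same case split between horizontal and non-horizontal incident edges, and the same injection sending each neighbour of $u$ in $S$ to the antipodal neighbour on the linear extension of that edge, which maximality forces to lie outside $S$. The ``structural fact'' you flag as the main obstacle (each edge at $u$ has an antipodal continuation through $u$, one edge per ray) is exactly what the paper also reads off from (P2) without further elaboration, so nothing more is expected of you there.
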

     \begin{proof}
     	Given a non-empty subset $ S\subset V $ and a straight line $ \ell $, suppose without loss of generality that 
 $ 0<|S_1^{\ell}|<\infty $ (see Definition \ref{d:planar shrink property}).   
     	 We need  to prove  that  there exists $ u\in S_1^{\ell} $ such that $ deg_{V\setminus S}(u) \geq deg_{S}(u) $. 
     	First, we note that by property (P2) every vertex has a degree that is even. 
     	Without loss of generality, we can consider the line $ \ell $ coincident with the axis $ x $ (by applying a translation  and  a rotation)  such that  the vertices in $ S_1^{\ell} $ have a non-negative ordinate. 
     	We  write every vertex $ v\in V $ as $ v=(v_x,v_y )\in \R^2 $ and let $ r_{y}:=\max_{v\in S_1^{\ell}}v_y $ and 
     	$ r_{x}:=\max \{v_x \in \R :  (v_x , r_y)\in S_1^{\ell}      \}$.

We consider the vertex  $ u= (r_x, r_y) \in V$.
     	Given $ w =(w_x, w_y)\in N_S(u) $, there are two cases to consider (see Figure \ref{f:Hshrink}). 
     	If $ w_y=r_y $  then, by property (P2), there exists a vertex $ w' =(w'_x, w'_y)\in N_{V\setminus S}(u) $ with $  w'_y=r_y $ and  $ w'_x >r_x $.
     	If instead $ w_y<r_y $  then, by property (P2), there exists a vertex $ w'\in N_{V\setminus S}(u) $ with $ w'_y>r_y $.
     	In both cases,  $ w^\prime $ belongs to the linear extension of edge $ \{u,w\} $ out of $S$.
     	In other words, it is possible to define an injective function
     	\begin{equation*}
     		f_{u}\colon w\in N_S(u)\mapsto w^\prime\in N_{V\setminus S}(u),
     	\end{equation*} 
     	where the vertices $ u $, $ w $ and $ w^\prime $ are aligned.       
     	This implies that $ deg_{V\setminus S}(u)\geq deg_S(u) $.     	      	
     \end{proof}
     
     
    We note that the  shrink property holds even if we  replace $ \mathcal{H} $ with 
 a class of graphs embedded in $ \R^{d} $ having the properties (P1), (P2) and (P3), i.e. they are obtained by intersection of lines. The proof of this fact is analogous to the proof of Theorem \ref{p: H planar shrink property}.      	
     

          \begin{figure}
      	\centering
      	\includegraphics[scale=0.37]{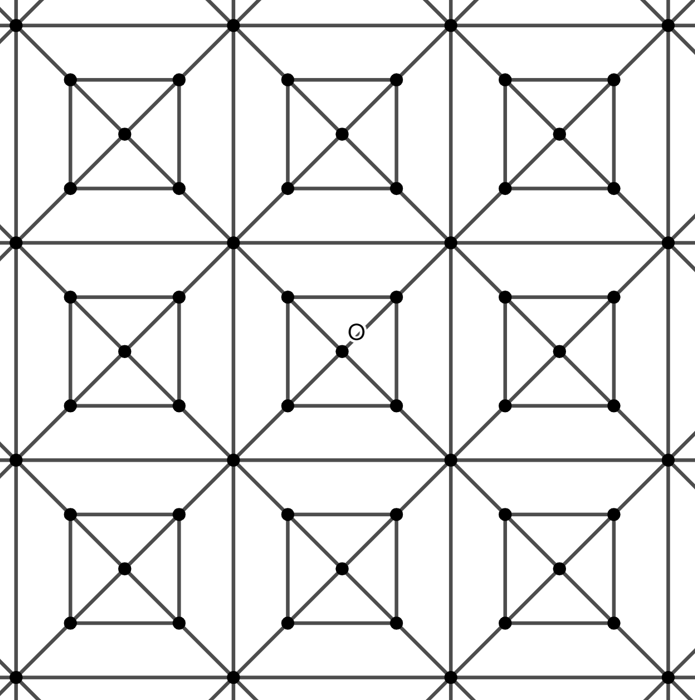} \quad
      	\includegraphics[scale=0.40]{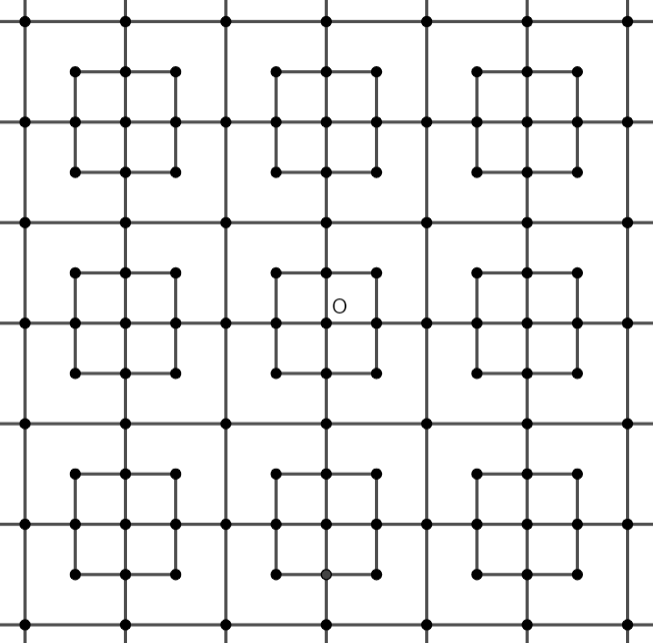}
      	\caption{Examples of graphs in $ \G(4) $ that do not have the shrink property.}
      	\label{f: non shrink}
      \end{figure}
     
 Now,     we  provide some explicit  examples.     
      The  square lattice $ \Z^2 $, the triangular lattice (see Figure \ref{f:trL}) and the graphs in Figures 
      \ref{f: pi/2 e pi}, \ref{f:trL}, \ref{f: double lattice Z^2 con passi 1 e 2} and \ref{f: ex180} belong to class $ \mathcal{H} $  therefore, by Theorem \ref{p: H planar shrink property}, all these graphs have  the planar shrink property. In particular, we note that the graph on the right in Figure \ref{f:trL} belongs to $ \G(3)\setminus \G(6) $, i.e. it is invariant under rotation of an angle of $ 2\pi/3 $, but not of $ \pi/3 $.   In Figure \ref{f: non shrink} we give two examples of graphs that do not have the
 shrink property. In Figure \ref{f: ex180} we show a graph $G $ with infinite classes that is invariant by a rotation of $\pi $ 
but not invariant under a rotation of $\pi/2$. For this  $I(G, 1/2)$-model  we can not apply Theorem \ref{t:main result} but we have a proof showing that it is of type $\I$. We do not present this proof that  would break the unitary character of our presentation.

     \begin{figure}[htp]
     	\centering
     	\includegraphics[scale=0.40]{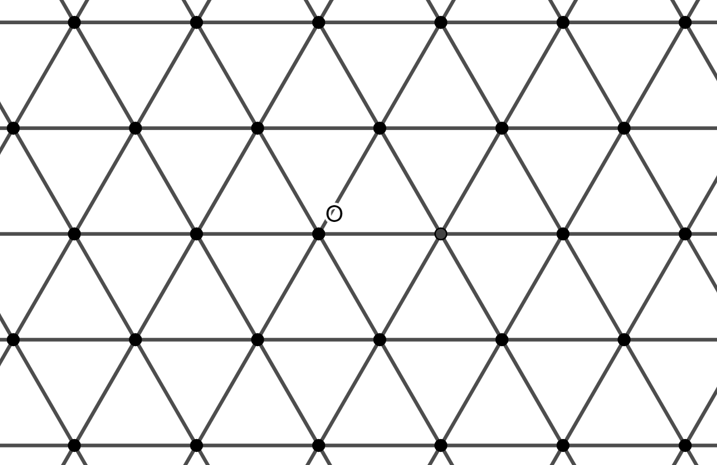}\quad
     	\includegraphics[scale=0.36]{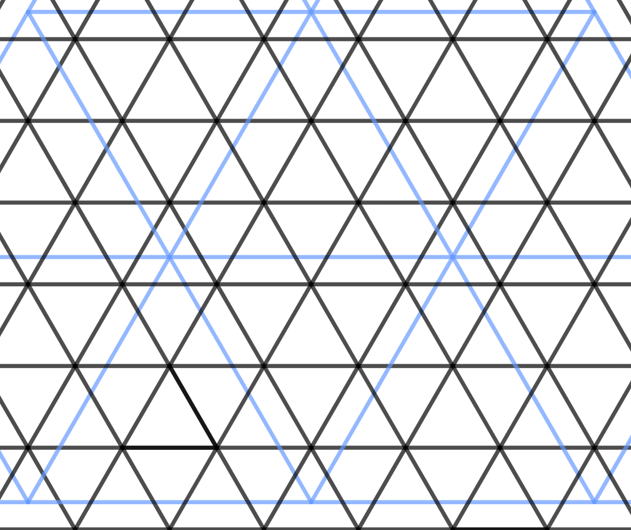}
     	\caption{On the left the triangular lattice, example of a graph $ G\in \G(6)\cap \mathcal{H}$. On the right a double triangular lattice, example of a graph $ G\in (\G(3)\setminus \G(6))\cap \mathcal{H} $. 
     	 }
     	\label{f:trL}
     \end{figure}

     \begin{figure}[h]
     	\centering
     	\includegraphics[scale=0.35]{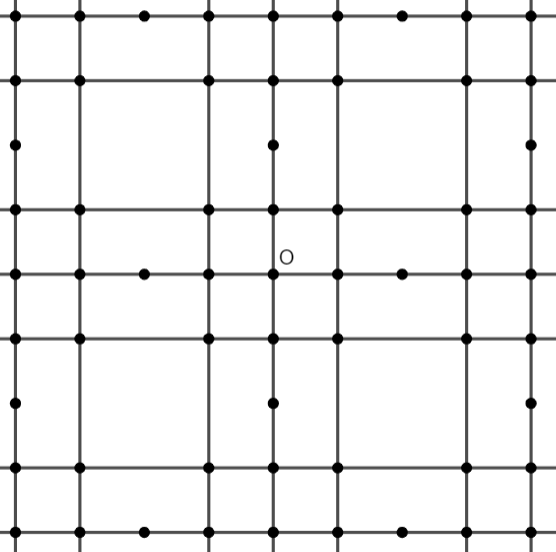}
     	\caption{Modified double lattice $ \Z^2 $: example of a graph $ G\in \G(4)\cap \mathcal{H} $. 
     	}
     	\label{f: double lattice Z^2 con passi 1 e 2}
     \end{figure}
     
     
       \begin{figure}[h]
     	\centering
     	\includegraphics[scale=0.35]{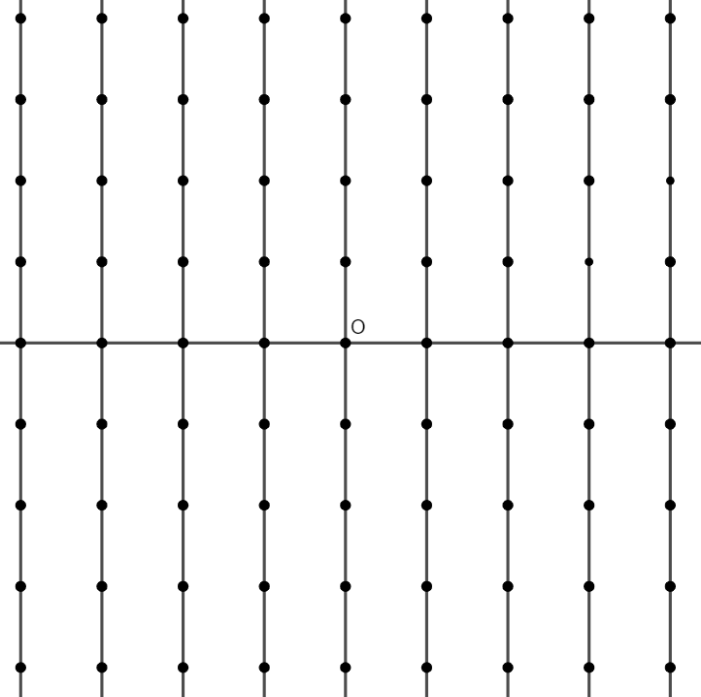}
     	\caption{Example of a graph $ G\in \mathcal{H} $ that is invariant under translation and rotation of $ \pi $, but not of $ \pi/2 $. The number of classes of $ G $ is infinite.  
     	    	}
     	\label{f: ex180}
     \end{figure}

\subsection*{Data Availibility Statement} This mathematical article has no external data to present.

\subsection*{Competing Interests} The authors have no conflicts of interest to declare that are relevant to the content of this article. 
     \subsection*{Acknowledgements} We thank Andrea Maffei  for a useful discussion and comments about Lemma \ref{l:rotations}.


     \bibliographystyle{abbrv}

\end{document}